\theoremstyle{definition}
\theoremstyle{remark}
\theoremstyle{plain}
\newtheorem{The}{Theorem}
\newtheorem*{The*}{Theorem}
\newtheorem{Pro}{Proposition}
\newtheorem{Lem}{Lemma}
\newtheorem{Cor}{Corollary}
\newtheorem*{Cor*}{Corollary}
\theoremstyle{definition}
\newtheorem*{Def}{Definition}
\newtheorem{Rem}{Remark}
\newtheorem{Exa}{Example}
\newtheorem*{Rem*}{Remark}
\numberwithin{equation}{section}
\DeclareMathOperator{\del}{\partial}
\newcommand{\dvector}[1]{{\left(\begin{matrix}#1\end{matrix}\right)}}
\newcommand{\R}{\mathbb{R}}
\newcommand{\C}{\mathbb{C}}
\newcommand{\N}{\mathbb{N}}
\newcommand{\Z}{\mathbb{Z}}
\renewcommand{\H}{\mathbb{H}}  
\newcommand{\CP}{\mathbb{CP}}
\newcommand{\ii}{\mathbbm i}
\newcommand{\jj}{\mathbbm j}
\newcommand{\kk}{\mathbbm k}
\renewcommand{\Im}{\operatorname{Im}}
\renewcommand{\Re}{\operatorname{Re}}
\newcommand{\II}{\operatorname{II}}
\begin{document}

\title{Equivariant constrained Willmore tori in the $3-$sphere}

\author{Lynn Heller}
\address{Institut f\"ur Mathematik,  Universit{\"a}t T\"ubingen, Auf der Morgenstelle 10, 72076 T\"ubingen, Germany\\}
\email{lynn-jing.heller@uni-tuebingen.de}       
\subjclass[2010]{Primary 53A05, 53A30, 53C42; Secondary 37K15}

\date{\today}

\thanks{The author is supported by the Sonderforschungsbereich Transregio 71}

\begin{abstract} 
In this paper we study equivariant  constrained Willmore tori in the $3-$sphere. These tori admit a $1-$parameter group of M\"obius symmetries and are critical points of the Willmore energy under conformal variations. 
We show that the spectral curve  associated  to an equivariant torus is given by a double covering of $\C$ and classify equivariant constrained Willmore tori by the genus $g$ of their spectral curve.  In this case the spectral genus satisfies $g \leq 3.$
 \end{abstract}
\keywords{Equivariant tori, constrained Willmore surfaces,  spectral curve, elastic curves, polynomial Killing field}
\maketitle


\section{Introduction}\label{sec:intro}

We consider conformally immersed surfaces into $3-$dimensional space forms which are critical points of the Willmore functional $\int (H^2 +1)dA$  under conformal variations, see \cite{BoPetP} and \cite{KS}. These surfaces are called constrained Willmore. They are natural generalizations of Willmore surfaces and compact constrained Willmore minimizers can be viewed as optimal 
realizations of the underlying Riemann surface in $3-$space. While minimal surfaces are always Willmore,  
surfaces with constant mean curvature (CMC surfaces) provide examples of constrained Willmore surfaces. If the underlying Riemann surface is a topological sphere, there is only one conformal class and all constrained Willmore surfaces are Willmore.
In the case of tori Bohle \cite{B} has shown that constrained Willmore surfaces form an integrable system. This means that we can associate to every constrained Willmore torus a compact Riemann surface of genus $g$ -  the spectral curve -  and the immersion is then explicitly given in terms of algebraic data on the spectral curve. The number $g$ is called the spectral genus of the immersion.\\

The first examples of Willmore tori which are not minimal in a space form have been the Willmore Hopf tori by Pinkall \cite{P}. These tori are given by the preimages of elastic curves on $S^2,$ critical points of the energy functional $\int\kappa^2 ds$, under the Hopf fibration. The notion of equivariant tori generalizes this construction. Equivariant tori are conformally immersed tori into the conformal $3-$sphere with a $1-$parameter family of M\"obius symmetries.  Another class of examples of equivariant surfaces are tori of revolution given by rotating closed curves in the upper half plane around the $x-$axis. The torus is then constrained Willmore if the curve is elastic in the upper half plane, viewed as the hyperbolic plane. Constrained Willmore tori of revolution are always CMC  in a space form, see \cite{B}. Elastic curves in $S^2$ and $H^2$ are constructed in \cite{LS}. Equivariant Willmore tori were classified  in \cite{FP}. The equivariant case is especially interesting since the tori are comparably easy to obtain and we get a large class of explicit examples. These examples can be used to investigate Whitham deformations of constrained Willmore tori. In analogy to equivariant harmonic tori into the $2-$sphere, i.e., the CMC case,  it has been conjectured that all equivariant constrained Willmore tori have spectral genus $1,$ since the known examples are given in terms of elliptic functions. We disprove this conjecture but we show that equivariant constrained Willmore tori have spectral genus $g\leq3.$\\

The paper is organized as follows. In the second section we deal with general properties of equivariant and constrained Willmore tori. We show that every equivariant torus can be interpreted as the preimage of a curve  under a certain Seifert fibration of a conformally flat $S^3$. The geometry of the surface is determined by the geometry of the curve and of the fibration.  We state the Euler-Lagrange equation for the constrained Willmore problem in the equivariant case and define an associated family of constrained Willmore surfaces. \\

We recall the notion of the spectral curve $\Sigma$ of a general conformally immersed torus $f: T^2 \rightarrow S^3$ of \cite{BLPP}  in the third section. We call $f$ a finite gap immersion, if $\Sigma$ has finite genus.
To the spectral curve one can associate a $T^2-$family of holomorphic line bundles $\mathcal L_x \rightarrow \Sigma.$  It is shown in theorem 5.6 of \cite{BoPP}  that for a fixed $x_0 \in T^2$ the map 
$$\Psi: T^2 \rightarrow \text{Jac}(\Sigma) \quad  x   \mapsto \mathcal L_x \mathcal L_{x_{0}}^{-1}$$
is a group homomorphism. 
The immersion can be reconstructed from the spectral curve and $\Psi(T^2).$ \\

In the last section we prove that the spectral curve of an equivariant immersion is a double covering of $\C$.
Isothermic equivariant tori in $S^3$ are constrained Willmore if and only if they  have  spectral genus $g \leq1$.  
Moreover,  they
have constant mean curvature in some $3-$dimensional space form and are associated to a constrained Willmore cylinder of revolution as isothermic constrained Willmore surfaces.
A non-isothermic equivariant torus in $S^3$ is constrained Willmore if and only if its spectral genus satisfies $g=2$ or 
$g=3$. In particular,  an equivariant torus has spectral genus $2$ if and only if it is associated to a non- homogenous constrained Willmore Hopf cylinder, which is given by the preimage of a not necessarily closed curve in $S^2$ under the Hopf fibration,  as constrained Willmore surfaces.
  
\section{Equivariant Constrained Willmore Tori in the $3-$Sphere}
We consider the $S^3 \subset \H$  as the set of unit length quaternions. The isometries of $S^3$ are then of the form
$$a \mapsto \lambda a \mu,$$
for $ \lambda , \mu \in S^3$.   Further we identify $\H = \C \oplus \C \jj.$  
\begin{Def}
A map $f: \C \rightarrow S^3 $  is called $\R-$equivariant, if there exist group homomorphisms 
\begin{equation*}
\begin{split}
&M: \R \rightarrow \text{M\"ob}(S^3), t \mapsto M_t,\\
&\tilde M: \R \rightarrow \{\text{conformal transformations of } \C\}, t \mapsto \tilde M_t,
\end{split}
\end{equation*}
such that 
$$f \circ \tilde M_t = M_t \circ f,  \text{ for all } t.$$
Here M\"ob$(S^3)$ is the group of M\"obius transformations of $S^3.$
\end{Def}

\noindent
If $f$ is doubly periodic with respect to a lattice $\Gamma \subset \C,$ we obtain a torus and the following proposition holds.

\begin{Pro}\label{equivariant}
Let $f: T^2 \cong \C/\Gamma \rightarrow S^3$ be a equivariant conformal immersion. Then there exist a holomorphic coordinate $z = x+iy$ of $T^2$ together with $m , n \in \N$ and $gcd(m,n) = 1$ such that 
$$f(x,y) = e^{  \ii \tfrac{m+n}{2}x} f(0,y) e^{\ii  \tfrac{m-n}{2} x}.$$
up to isometries of $S^3$ and the identification of $S^3$ with $SU(2).$
\end{Pro}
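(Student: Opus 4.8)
The plan is to treat the two one-parameter groups separately and then combine them through the equivariance relation evaluated at $x=0$. I would begin with $\tilde M_t$. Since $f$ descends to the torus $\C/\Gamma$, the flow $\tilde M_t$ must preserve the lattice $\Gamma$. Writing an orientation-preserving conformal transformation of $\C$ as $z\mapsto A_t z+b_t$, the linear parts $A_t\in\C^*$ form a one-parameter subgroup with $A_t\Gamma=\Gamma$; as $\Gamma$ is discrete and $A_t$ depends continuously on $t$ with $A_0=1$, this forces $A_t\equiv 1$. Hence $\tilde M_t(z)=z+b_t$ with $b_t=t\,w_0$ for a fixed $w_0\in\C^*$. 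After a biholomorphic change of the torus coordinate (a rotation and rescaling of $z$) I may assume the flow direction is real; writing $z=x+iy$, the map $\tilde M_t$ becomes translation in the $x$-direction. The precise scale is fixed in the last step.

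Next I would analyze $M_t$. The relation $M_t\circ f=f\circ\tilde M_t$ shows $M_t$ is a nontrivial one-parameter group of M\"obius transformations preserving the compact immersed surface $f(T^2)$, and the image of each flow line $\{(x,y_0):x\in\R\}$ is an orbit of $M_t$. On the torus the $x$-flow is a linear flow and hence recurrent, whereas a hyperbolic or parabolic one-parameter subgroup of M\"ob$(S^3)$ moves generic points monotonically towards an attracting fixed point and is not recurrent; this incompatibility rules out the non-elliptic cases. Therefore $M_t$ is elliptic and, after conjugation by a fixed M\"obius transformation (absorbed together with the identification $S^3\cong\SU(2)$), is a one-parameter group of isometries. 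Writing isometries as $a\mapsto\lambda a\mu$, I obtain $M_t(a)=\lambda_t\,a\,\mu_t$ with one-parameter subgroups $\lambda_t=\exp(tu)$, $\mu_t=\exp(tv)$ for imaginary quaternions $u,v$. Conjugating by a fixed isometry $a\mapsto paq$ conjugates $u$ by $p$ and $v$ by $q^{-1}$; since any imaginary quaternion is conjugate to a real multiple of $\ii$, I may simultaneously arrange $\lambda_t=e^{\ii p t}$ and $\mu_t=e^{\ii q t}$ for real constants $p,q$. Evaluating the equivariance at $x=0$ then yields $f(x,y)=e^{\ii p x}\,f(0,y)\,e^{\ii q x}$.

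It remains to identify $p,q$ with half-integers. Passing to $\H=\C\oplus\C\jj$ and using $\jj e^{\ii q t}=e^{-\ii q t}\jj$, a short computation gives $M_t(z+w\jj)=e^{\ii(p+q)t}z+e^{\ii(p-q)t}w\,\jj$, so $M_t$ is the weighted action $(z,w)\mapsto(e^{\ii(p+q)t}z,\,e^{\ii(p-q)t}w)$ on $S^3\subset\C^2$. The crucial point, which I expect to be the main obstacle, is the closing-up condition coming from double periodicity: for $f$ to descend to the compact torus the $x$-flow must generate a closed orbit, which forces the ratio $(p+q):(p-q)$ to be rational; writing it in lowest terms as $m:n$ with $\gcd(m,n)=1$ and $m,n\in\N$ (signs and ordering fixed by orienting the flow and choosing the diagonalization), I rescale the $x$-coordinate so that $p+q=m$ and $p-q=n$, i.e. $p=\tfrac{m+n}{2}$ and $q=\tfrac{m-n}{2}$. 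Choosing the scale so that the $x$-period equals the minimal period of the action is exactly what makes $m,n$ coprime: a common factor $d>1$ would make $M_t$ the identity already at $t=2\pi/d$, contradicting minimality. One then checks that $f(x+2\pi,y)=f(x,y)$, consistent with $(2\pi,0)\in\Gamma$. The delicate sub-points to handle with care are ruling out the degenerate case that $f(T^2)$ lies in a proper invariant subsphere on which a nontrivial $M_T$ acts trivially, and, should the weight ratio be a priori irrational, observing that the closure of $\{M_t\}$ is then a $2$-torus whose single principal orbit is $f(T^2)$, which lets one re-select a rational, hence integral, one-parameter subgroup of symmetries.
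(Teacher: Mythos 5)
The paper gives no inline proof of this proposition---it is quoted from sections 5.1 and 5.2 of the thesis \cite{H}---so your argument has to stand on its own, and in essence it does. Your route (kill the linear part of $\tilde M_t$ by discreteness of the period lattice; rule out parabolic and loxodromic $M_t$ because recurrence of a linear flow on a compact torus is incompatible with orbits converging to a fixed point; conjugate the resulting elliptic one-parameter group into $SO(4)$, lift it to $S^3\times S^3$ and normalize both factors to the $\ii$-axis; then extract the weights from periodicity) is the natural reduction to isometric $S^1$-actions on $S^3$, essentially the classification invoked in \cite{HL}. One remark: the Möbius conjugation you absorb is genuinely necessary (an equivariant torus need not have \emph{isometric} symmetries, e.g.\ a Möbius transform of a Hopf torus), so the conclusion holds up to M\"obius transformations rather than isometries; this is what the Remark following the proposition in the paper is acknowledging, and your proof handles it correctly.

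Two of the sub-points you flag at the end are genuinely needed, and since you only sketch them, let me record why your sketches close. (i) In the closed-orbit case you need that $M_\omega$ fixing the profile curve pointwise forces $M_\omega=\mathrm{id}$. This holds because the fixed-point set of a non-identity orientation-preserving isometry of $S^3$ is either empty or a great circle $C$; since $M_x$ commutes with $M_\omega$, it maps $C$ to itself, so $f(T^2)=\bigcup_x M_x\bigl(f(0,\cdot)\bigr)$ would be contained in the one-dimensional set $C$, contradicting that $f$ is an immersion. The same mechanism, applied to $M_{\omega/d}$, is what makes your coprimality-by-minimality argument rigorous. (ii) Your assertion that double periodicity forces the $x$-flow lines to close up is not literally true: the translation direction may be incommensurable with $\Gamma$, in which case every flow line is dense. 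Your final remark is the correct repair: then $f(T^2)$ is the closure of a single $M_t$-orbit, hence an orbit of the compact torus $\overline{\{M_t\}}\cong T^2\subset SO(4)$, i.e.\ a homogeneous torus $\{|z|=r\}$. To get the normal form for $f$ itself (not merely for its image) in this case, lift $f$ through the conformal covering $\C\rightarrow\{|z|=r\}$: the lift has nowhere-vanishing, doubly periodic derivative, hence is affine by Liouville's theorem, which exhibits $f$ in the stated form with $(m,n)=(1,1)$ after a holomorphic change of coordinate. With (i) and (ii) written out, your proof is complete.
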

\begin{Rem}
Conjugation in the proposition above is equivalent to a M\"obius transformation of the ambient space.
\end{Rem}
\begin{proof}
A proof of the proposition can be found in the sections 5.1 and 5.2 of \cite{H}.
\end{proof}
\begin{Def}
Let $l_1:=  \tfrac{m+n}{2}$ and $l_2 :=  \tfrac{m-n}{2}$. A immersion $f: T^2 \rightarrow S^3$ of the form 
$$f(x,y) = e^{\ii l_1 x} f(0,y) e^{\ii l_2 x},$$
is called a $(m,n)-$torus 
and $\gamma := f(0,y)$ its profile curve, which is in general not closed.
\end{Def}

A conformally parametrized surface in $S^3$ has two conformal invariants which determine the surface up to M\"obius transformations, see  \cite{BuPP}.  The first one is the conformal Hopf differential $q$. The second is the Schwarzian derivative $c$. In the equivariant case $c$ is determined by $q$ up to a integration constant by the Gau\ss-Codazzi equations. Thus we will only consider $q$ in the following. In contrast to \cite{BuPP} we consider the conformal Hopf differential as a complex valued function by trivializing $K$ via $dz$.
\begin{Def}
Let $f$ be a conformally immersed surface in $S^3.$ We call the function
 $$q := \frac{\II \left(\frac{\del }{\del z},\frac{\del}{\del z} \right)}{|df|}$$
 the conformal Hopf differential of $f.$  
\end{Def}

\noindent
The map $f$ is isothermic, i.e., $f$ has a conformal curvature line parametrization, if there is a $\mu \in S^1$ such that $q\mu$ is real valued. 
\subsection{Seifert Fibrations of $S^3$}\label{Seifert fibrations}
Next we want to interpret the conformal Hopf differential of an equivariant immersion in terms of invariants of its profile curve and its $(m,n)-$type. 
For this we introduce the Seifert fiber spaces. For $m,n\in \N$ coprime, consider the following equivalence relation on $S^3.$
Let $a, b \in S^3,$ then
 $$ a \sim_{m,n} b \Leftrightarrow \text{ if  there exist a } t \text{ such that } a= e^{\ii l_1 t} b e^{\ii l_2 t}.$$ 

\begin{Def}
The triple $ P:= (S^3, S^3/_{\sim_{m,n}} , \pi_{m,n}),$ where $\pi_{m,n}$ maps every point in $S^3$ to its equivalence class is a $(m,n)$-Seifert fiber space.  
\end{Def}

\noindent
The base space $S^3/_{\sim_{m,n}}$  is a regular manifold away from the points $[1]_{\sim_{m,n}}$ and $[\jj]_{\sim_{m,n}}.$ If $mn > 1$ both points are singular. For $m= n = 1$ the projection $\pi_{1,1}$ is the Hopf fibration. Thus the base space is the round sphere and has no singular points. In the case of $m=0, n= 1$ we have that only  $[\jj]_{\sim_{m,n}}$ is singular, see \cite{BW} for details. By construction the following Proposition holds.
\begin{Pro}
There exist an one-to-one correspondence between closed curves in the base space of the $(m,n)-$Seifert fiber space and $(m,n)-$tori.
\end{Pro}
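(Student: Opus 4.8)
The plan is to write down the two assignments of the correspondence explicitly and verify that they are mutually inverse. The key observation is that the $\R$-action generating the Seifert fibration, $t\cdot b = e^{\ii l_1 t}\, b\, e^{\ii l_2 t}$, is exactly the action that builds an $(m,n)$-torus from its profile curve: since $f(x,y)=e^{\ii l_1 x} f(0,y)\, e^{\ii l_2 x}$, translating $x$ by $t$ moves $f(x,y)$ along the fiber through it, so that $\pi_{m,n}(f(x,y)) = \pi_{m,n}(\gamma(y))$ depends only on $y$. Thus to an $(m,n)$-torus $f$ I would assign the curve $c := \pi_{m,n}\circ\gamma$ in the base space, where $\gamma(y)=f(0,y)$; conversely, to a curve in the base I would assign the map $f(x,y):=e^{\ii l_1 x}\gamma(y)e^{\ii l_2 x}$ obtained from any lift $\gamma$.

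To see that closed curves correspond to tori, I would run the period computation in both directions. Writing a lattice vector as $\omega = a+\ii b\in\Gamma$, the identity $f(z+\omega)=f(z)$ forces $\gamma(y+b)=e^{-\ii l_1 a}\gamma(y)e^{-\ii l_2 a}$, hence $\gamma(y+b)\sim_{m,n}\gamma(y)$ and $c(y+b)=c(y)$; since $\gcd(m,n)=1$ the fiber through a generic point closes up at $t=2\pi$, so $(2\pi,0)\in\Gamma$ is a fiber-direction period, and the complementary generator of $\Gamma$ provides a transverse period $L$ making $c$ closed of period $L$. Conversely, given a closed base curve of period $L$, closedness yields $\gamma(y+L)=e^{\ii l_1 s_0}\gamma(y)e^{\ii l_2 s_0}$ for a constant shift $s_0$, which together with the fiber period $(2\pi,0)$ produces a rank-two lattice $\Gamma$; hence $f$ descends to $\C/\Gamma\cong T^2$ and is an $(m,n)$-torus. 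That the two assignments invert one another is then immediate from $f(0,y)=\gamma(y)$.

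The main obstacle I expect lies not in the construction but in the well-definedness and bijectivity. First, the lift $\gamma$ of a base curve is unique only up to the Seifert action, so I must check that replacing $\gamma$ by $y\mapsto e^{\ii l_1 t_0}\gamma(y)e^{\ii l_2 t_0}$ yields the same torus after the reparametrization $x\mapsto x+t_0$. Second, I must justify that the shift $s_0$ above is genuinely constant: this uses that the Seifert action is free on generic fibers, so the continuous function $y\mapsto s(y)$ is determined modulo $2\pi$ and hence locally, thus globally, constant on the connected curve. Third, I must exclude the degenerate situation in which the projection of $\Gamma$ to the $y$-axis is dense, for then $c$ would be constant and $f$ would have one-dimensional image; this cannot occur precisely because $f$ is an immersion, which is where the conformal normal form of Proposition \ref{equivariant} enters. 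Finally, I would treat the two exceptional fibers $[1]_{\sim_{m,n}}$ and $[\jj]_{\sim_{m,n}}$ separately, confirming by means of the explicit nearby fibers $z_1 e^{\ii m t}+z_2 e^{\ii n t}\jj$ that a base curve through these singular points still lifts to a regular profile curve and produces an immersion.
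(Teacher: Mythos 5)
Your overall plan---making both assignments of the correspondence explicit and checking they invert each other---is exactly what the paper intends: the paper offers no argument at all (the proposition is asserted to hold ``by construction''), and the construction it has in mind is the one described in Section 2.1, where the lift of the base curve is taken to be the \emph{horizontal} lift with respect to the connection $\omega_{m,n}=g_{m,n}(\cdot,B)$. The genuine gap in your proposal is in the inverse direction, where you allow ``any lift'' $\gamma$ and claim the closing relation $\gamma(y+L)=e^{\ii l_1 s_0}\gamma(y)e^{\ii l_2 s_0}$ holds with a \emph{constant} shift $s_0$, on the grounds that $s(y)$ is ``determined modulo $2\pi$ and hence locally, thus globally, constant''. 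That is a non sequitur: $s$ is a continuous function into $\R/2\pi\Z$, and such functions are not locally constant; for a generic lift $s$ is genuinely non-constant. Concretely, in the Hopf case $(m,n)=(1,1)$, $l_1=1$, $l_2=0$, replace a horizontal lift $\gamma_h$ (with constant shift $s_0$) by $\gamma(y)=e^{\ii\phi(y)}\gamma_h(y)$ with $\phi(y)=y^2$: then $s(y)=s_0+2Ly+L^2$. For such a lift the map $f(x,y)=e^{\ii l_1x}\gamma(y)e^{\ii l_2x}$ is invariant under $(x,y)\mapsto(x-s(y),\,y+L)$, which is not a translation, so the construction produces no second lattice period and $f$ need not descend to any torus $\C/\Gamma$; your argument breaks exactly at the step you wave through. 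The repair is to fix a distinguished lift: for the horizontal lift, the curves $y\mapsto\gamma(y+L)$ and $y\mapsto e^{\ii l_1 s_0}\gamma(y)e^{\ii l_2 s_0}$ (with $s_0$ defined by comparing the endpoints at $y=0$) are both horizontal, both project to the closed curve $c$, and agree at $y=0$; since the $S^1$-action preserves the horizontal distribution (this is where one uses that $\omega_{m,n}$ is a principal connection), uniqueness of horizontal lifts forces them to coincide, which is precisely the constancy of the shift.

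A second, smaller error is your last claim that a base curve through the singular points ``still lifts to a regular profile curve and produces an immersion''. This fails in the rotational case included in the paper: for $(m,n)=(1,0)$ the action is $b\mapsto e^{\ii t/2}b\,e^{\ii t/2}$ and the exceptional fiber through $\jj$ is a fixed point, $e^{\ii t/2}\jj\,e^{\ii t/2}=\jj$, so a base curve passing through $[\jj]_{\sim_{m,n}}$ yields a map collapsing an entire circle $\{y=y_0\}$ to a point, which is not an immersion and hence not an $(m,n)$-torus (geometrically: a profile curve touching the rotation axis). So the correspondence must be restricted to curves in the regular part of the base; only for $mn\neq 0$, where $h$ never vanishes and no fiber degenerates, can curves through the exceptional (multiple) fibers be admitted, and even there the immersion property is a check you promise rather than perform.
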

\begin{Exa}\label{DH}
The two exceptional cases here  provide the easiest examples of equivariant tori.
In the first case $m = n = 1$  the torus is obtained as the  preimage of a closed curve on $S^2$ under the Hopf fibration. These so called Hopf tori were first studied in \cite{P}. In the second case we have $m = 1, n= 0$ and  $M_t$ is a rotation. The torus can be constructed by the the rotation of a closed curve in the open upper half plane  around the $x-$ axis. 
\end{Exa}

$ P$ is a principal fiber bundle away from the singular points. Let $P^* := P\setminus\{\text{singular points} \}.$ We define a connection on $P^*$ such that the curvature of $P^*$ has a geometric meaning for the corresponding $(m,n)-$tori. In order to do so, we need  to define a metric $g_{m,n}$. It turns out that the right choice of the metric is given by dividing point wise  the round metric $g$ on $S^3$ by the fiber length $h(a) := |\ii l_1 a|  + l_2a \ii|^2$, i.e., $g_{m,n} = \tfrac{1}{h}g$. \\

Consider the closed curve $\tilde \gamma$ in $S^3/\sim_{m,n}.$ Then a conformal parametrization of the corresponding equivariant torus can be obtained by the horizontal lift of the curve with respect to the principal $S^1-$connection given by its connection $1-$form $\omega_{m,n} = g_{m,n}(., B),$ where $B$ is the fiber direction. In other words, we  take a lift of the curve $\tilde \gamma$  to $S^3$ such that its tangent vector is point wise perpendicular to the fiber of $P.$ This defines also a horizontal lift of the frame of $\tilde \gamma$. The curvature of the lifted curve $\gamma$ in  the direction given by the horizontal lift  of the normal vector of $\tilde \gamma$ is by definition the curvature of the curve $\tilde \gamma$.  Further  the lifted curve  $\gamma$ has torsion in the fiber direction $B,$ which becomes the binormal vector of $\gamma$. 
Let $T$ denote the tangent vector of $\gamma$. Then we have $$<B', T > = <l_1\ii \gamma' + l_2 \gamma' \ii, \gamma'> = \text{Re}((l_1\ii \gamma' + l_2 \gamma' \ii)\bar \gamma') = 0,$$ where $()'$ is the derivative in $\H \cong \R^4.$ Thus we have
\begin{Lem}\label{frenet}
The frame of the profile curve $\gamma$ given by the tangent vector  $T$, the normal vector $N_{orm}$ and the fiber direction $B$ of the Seifert fiber space is its Fr\'enet frame in $S^3$. 
\end{Lem}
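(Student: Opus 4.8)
\emph{The plan} is to verify that the orthonormal frame $(\gamma, T, N_{orm}, B)$ of $\H\cong\R^4$ along $\gamma$ satisfies the Frenet--Serret equations of $S^3$. First I would recall that, since $\gamma\subset S^3$ is a horizontal lift, the outward unit normal of $S^3$ at $\gamma$ is $\gamma$ itself, so the Levi--Civita connection of the round metric is the tangential projection of the ambient derivative, $\nabla_T V = V' - \langle V',\gamma\rangle\gamma$. The spherical Frenet frame of a curve with unit tangent $T=\gamma'/|\gamma'|$ is the orthonormal frame $(T,N,B)$ characterized by $\nabla_T T = \kappa N$, $\nabla_T N = -\kappa T + \tau B$, $\nabla_T B = -\tau N$. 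Because the connection form of an orthonormal frame is skew-symmetric, i.e. $\langle\nabla_T e_i, e_j\rangle + \langle e_i,\nabla_T e_j\rangle = 0$, these three equations are equivalent to the single \emph{decoupling condition} that $\nabla_T B$ have no $T$-component; this is the only property distinguishing the Frenet frame among all orthonormal frames whose first vector is $T$.

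Next I would check that $(\gamma, T, N_{orm}, B)$ is genuinely orthonormal. Writing the (unnormalized) fiber direction as $\tilde B = l_1\ii\gamma + l_2\gamma\ii$, the identities $\Re(\ii\gamma\bar\gamma) = \Re(\gamma\ii\bar\gamma) = 0$ (using $\gamma\bar\gamma = 1$ and $\Re(pq)=\Re(qp)$) give $\langle\tilde B,\gamma\rangle = 0$, so $B = \tilde B/|\tilde B|$ is tangent to $S^3$; horizontality of the lift is exactly $\langle\tilde B, T\rangle = 0$; and $N_{orm}$, the horizontal lift of the unit normal of $\tilde\gamma$, is by construction the remaining unit vector completing the frame. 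Thus $\{\gamma, T, N_{orm}, B\}$ is an orthonormal basis of $\H$ and $\nabla_T B\in\operatorname{span}(T,N_{orm},B)$.

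The heart of the argument would be the decoupling condition, which is essentially the computation already recorded before the statement. Since $\langle\gamma, T\rangle = 0$ one has $\langle\nabla_T B, T\rangle = \langle B', T\rangle$, and differentiating $B = \tilde B/\sqrt h$ gives $\langle B', T\rangle = \tfrac{1}{\sqrt h}\langle\tilde B', T\rangle - \tfrac{h'}{2h^{3/2}}\langle\tilde B, T\rangle$; the first inner product vanishes by the displayed calculation $\langle l_1\ii\gamma' + l_2\gamma'\ii,\gamma'\rangle = 0$ and the second by horizontality, so $\langle\nabla_T B, T\rangle = 0$ (the conclusion being insensitive to the normalization of $T$). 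Since $|B|\equiv 1$ forces $\langle\nabla_T B, B\rangle = 0$, I conclude $\nabla_T B = -\tau N_{orm}$ with $\tau := -\langle\nabla_T B, N_{orm}\rangle$; skew-symmetry then yields $\langle\nabla_T T, B\rangle = -\langle T,\nabla_T B\rangle = 0$ and $\langle\nabla_T T, T\rangle = 0$, hence $\nabla_T T = \kappa N_{orm}$. The full Frenet--Serret system follows, proving $(T, N_{orm}, B)$ is the Frenet frame in $S^3$. I expect the only real obstacle to be conceptual rather than computational: one must consistently use the spherical covariant derivative $\nabla$ rather than the ambient $\R^4$ derivative, and recognize that the single pre-computed identity $\langle B', T\rangle = 0$ is precisely the one nontrivial input needed, everything else being orthonormality bookkeeping.
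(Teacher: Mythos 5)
Your proof is correct and takes essentially the same route as the paper: the paper's entire argument for this lemma is the displayed computation $\langle B', T\rangle = \operatorname{Re}\bigl((l_1\ii\gamma' + l_2\gamma'\ii)\bar\gamma'\bigr) = 0$ immediately preceding the statement, which is exactly the one nontrivial input you identify. The rest of your write-up (orthonormality of the frame, the normalization $B = \tilde B/\sqrt{h}$, the passage to the spherical covariant derivative, and the skew-symmetry bookkeeping) merely makes explicit what the paper leaves implicit.
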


\noindent
 A straight forward computation shows the following.

\begin{Pro}\label{confhopfdiff}
Let $f$ be a $(m,n)-$torus with normal vector $N_{orm}$ and 
conformal Hopf differential $q.$ Then 
$$q  = \frac{<f_{xx} - f_{yy} - 2 \ii f_{xy}, N_{orm} >}{\sqrt h} = \frac{1}{4}(\kappa_{m,n} +  i \Omega),$$
where $\kappa_{m,n}$ is the curvature of the profile curve of $f$ with respect to $g_{m,n}$ and   \linebreak  $\Omega vol_{m,n} = \frac{2mn}{\sqrt h} vol_{m,n} $ is the curvature form of the principal $S^1-$connection given by the connection $1-$form $\omega = g_{m,n}(., B)$.
\end{Pro}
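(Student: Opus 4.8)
The plan is to compute every derivative of $f$ directly from the equivariant normal form $f(x,y) = e^{\ii l_1 x}\gamma(y)e^{\ii l_2 x}$ and then to read off the two summands from the Fr\'enet frame of Lemma~\ref{frenet}. First I would differentiate in $x$ to get
$$f_x = l_1\ii f + l_2 f\ii,$$
which is exactly the tangent vector of the Seifert fiber through $f$; by the definition of $h$ this gives $|f_x|^2 = h$, so $f_x = \sqrt h\, B$ with $B$ the unit fiber direction. Since $z = x+\ii y$ is a conformal coordinate, $|f_y| = |f_x| = \sqrt h$ and $\langle f_x, f_y\rangle = 0$, hence $f_y = \sqrt h\, T$ with $T\perp B$; in particular the profile curve is unit speed for $g_{m,n} = \tfrac1h g$, which is precisely why $\kappa_{m,n}$ is the relevant quantity. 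Because $\partial_z = \tfrac12(\partial_x - \ii\partial_y)$, one has $f_{zz} = \tfrac14(f_{xx} - f_{yy} - 2\ii f_{xy})$, and projecting onto $N_{orm}$ while dividing by $|df| = \sqrt h$ yields the first equality; it then remains to identify the real and imaginary parts of $\langle f_{zz}, N_{orm}\rangle$.

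For the real part I would differentiate once more, obtaining $f_{xx} = l_1\ii f_x + l_2 f_x\ii = -(l_1^2 + l_2^2)f + 2l_1 l_2\,\ii f\ii$. As $N_{orm}\perp f$, only the last term survives, so $\langle f_{xx}, N_{orm}\rangle = 2l_1 l_2\langle\ii f\ii, N_{orm}\rangle$, a purely algebraic expression in the splitting $\H = \C\oplus\C\jj$ (on which $a\mapsto\ii a\ii$ acts as $-1$ on $\C$ and $+1$ on $\C\jj$). The term $\langle f_{yy}, N_{orm}\rangle$ is the round-space normal acceleration of the profile; using that $y$ is $g_{m,n}$-arc length I would convert it into the geodesic curvature $\kappa_{m,n}$ through the conformal change-of-metric formula, the derivatives of the conformal factor $h$ being absorbed by the fiber contribution coming from $f_{xx}$. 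The upshot is that $\langle f_{xx} - f_{yy}, N_{orm}\rangle$ reduces to $\sqrt h\,\kappa_{m,n}$, i.e. $\Re q = \tfrac14\kappa_{m,n}$.

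For the imaginary part I would use $f_{xy} = \partial_y f_x = l_1\ii f_y + l_2 f_y\ii = \sqrt h(l_1\ii T + l_2 T\ii)$, equivalently $\langle f_{xy}, N_{orm}\rangle = \sqrt h\,\langle B_y, N_{orm}\rangle$, so that this term measures exactly how the fiber direction $B$ rotates towards the normal as one moves along the horizontal profile direction. Geometrically this is the curvature of the principal $S^1$-connection $\omega_{m,n} = g_{m,n}(\cdot, B)$. Evaluating it with the skew operator $A = l_1 L + l_2 R$, where $L,R$ denote left and right multiplication by $\ii$ and $A$ preserves $\C\oplus\C\jj$, acting there as $m$, resp. $n$, times multiplication by $\ii$, one finds that $\langle f_{xy}, N_{orm}\rangle$ is, up to sign and the overall normalization, equal to $mn = l_1^2 - l_2^2$. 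Hence $\Im q = \tfrac14\Omega$ with $\Omega = \tfrac{2mn}{\sqrt h}$, matching the stated curvature form $\Omega\,\vol_{m,n} = \tfrac{2mn}{\sqrt h}\vol_{m,n}$.

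The main obstacle is the bookkeeping in the two middle steps: one must simultaneously track the quaternionic algebra of $L$ and $R$ on the frame $\{f, T, N_{orm}, B\}$ and the conformal rescaling from $g$ to $g_{m,n}$, since geodesic curvature is not conformally invariant and the correction terms in $\partial_y h$ must be shown to cancel against the fiber part of $f_{xx}$. Once the quantities $\langle\ii f\ii, N_{orm}\rangle$ and $\langle\ii T, N_{orm}\rangle,\langle T\ii, N_{orm}\rangle$ are expressed through $\kappa_{m,n}$ and the connection data, both equalities follow, and everything else is the straightforward computation alluded to.
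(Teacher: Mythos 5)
Your proposal is correct, and it is essentially the paper's own argument: the paper's entire proof of this proposition is the remark that a straightforward computation shows it, and the direct computation you outline (differentiate the normal form, identify $f_x=\sqrt h\,B$ and $f_y=\sqrt h\,T$ by conformality, then split $\langle f_{zz},N_{orm}\rangle$ into curve and connection terms) is exactly that computation. Both steps you flag as the delicate ones do close as you predict: the conformal-change correction to geodesic curvature under $g_{m,n}=\tfrac{1}{h}g$, namely $-\tfrac{1}{2\sqrt h}\,dh(N_{orm})=\pm\tfrac{2l_1l_2}{\sqrt h}\langle \ii f\ii ,N_{orm}\rangle$, is precisely the surviving fiber term $\langle f_{xx},N_{orm}\rangle/\sqrt h$, so that $\langle f_{xx}-f_{yy},N_{orm}\rangle/\sqrt h=\pm\kappa_{m,n}$; and $\langle f_{xy},N_{orm}\rangle=\pm mn$ because the skew operator $A=l_1L+l_2R$ satisfies $A(f)=f_x=\sqrt h\,B$, which is orthogonal to $T$ and $N_{orm}$, whence $\sqrt h\,\langle A(T),N_{orm}\rangle=\operatorname{Pf}(A)=\pm mn$, the product of the rotation speeds $m$ and $n$ of $A$ on $\C$ and $\C\jj$ (the residual signs and the factor $\tfrac14$, which your normalization handles consistently, are conventions the paper's own statement leaves loose).
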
 
\begin{Rem}\label{kurvenq}
In particular,  the conformal Hopf differential $q$  of equivariant tori  depends only on the curve parameter and is periodic.
\end{Rem}

\begin{Exa}
In the case of tori of revolution, i.e., $m = 1$ and $n= 0,$ we have that $4q = \kappa_{1,0} = \kappa$ is real and $\kappa$ is the curvature of the profile curve $\gamma$ in the hyperbolic plane. These tori are always isothermic. In the case of Hopf tori, i.e., $m = n = 1,$ we have that  the base space is the round $2-$sphere of constant curvature $4$ and the curvature of the Seifert fiber space is $2$. Thus $4q = \kappa  + 2i,$ where $\kappa$ is the curvature of $\tilde \gamma = \pi_{1,1}(\gamma)$ in this $2-$sphere.
\end{Exa}

\subsection{The Euler-Lagrange Equation of Equivariant Constrained Willmore Tori}

\begin{Def} Let $M$ be a compact surface and let $f: M \rightarrow S^3$ be an immersion into the round sphere. The \emph{Willmore energy} of $f$ is defined to be
$$\mathcal W (f) = \int_ M( H^2 + 1 ) dA,$$
where $H$ is the mean curvature of $f$ and $dA $ is  induced volume form. \\

A conformal immersion $f: M \rightarrow (S^3, g)$ is called Willmore, if it is a critical point of the Willmore energy $W$ under all variations by immersions and it is called constrained Willmore, if it is a critical point of $W$ under conformal variations, see \cite{BoPetP} and \cite{KS}.
\end{Def}

\noindent
In the case of a $(m,n)-$torus the Willmore energy reduces to:
$$ \mathcal W (f)  =  16 \pi \int_0^l |q|^2 dy = \tfrac{1}{2}\pi \int_0 ^l (\kappa_{m,n}^2 + \Omega^2) dy,$$
where $y$ the arclength parameter of the profile curve with respect to $g_{m,n}$.

\begin{Rem}
Instead of considering equivariant constrained Willmore tori it is equivalent to consider curves in $S^3/\sim_{m,n}$ which are critical under the generalized energy functional $\int_0 ^l (\kappa_{m,n}^2 + \Omega^2) dy.$
\end{Rem}

\begin{The}[\cite{BuPP}]
Let $f: T^2\cong \C/\Gamma \rightarrow S^3$ be a conformally parametrized equivariant immersion and $q$ its conformal Hopf differential. Then $f$ is constrained Willmore if and only if there exists a $\lambda \in \C$ such that $q$ satisfies the equation: 
\begin{equation}\label{EL}
\begin{split}
q'' +  8 (|q|^2 + &C)q - 8 \xi q = 2 Re(\lambda q),\\
2\xi' &= \bar q'q - q' \bar q,
\end{split}
\end{equation}
\noindent
where $\xi$  is a purely imaginary function and $C$ a real constant and the derivative is taken with respect to  the profile curve parameter.
\end{The}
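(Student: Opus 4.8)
The plan is to read the constrained Willmore condition as a constrained variational problem and to apply the Lagrange multiplier principle, after which the whole computation collapses to a one-dimensional problem. By the reduction recorded above, the Willmore energy of an $(m,n)$-torus is $\mathcal W(f)=16\pi\int_0^l|q|^2\,dy$, and by Remark \ref{kurvenq} the conformal Hopf differential $q$ is a periodic complex-valued function of the single profile-curve parameter $y$. Being constrained Willmore means being critical for $\mathcal W$ only among variations of $f$ that preserve the conformal class of $T^2\cong\C/\Gamma$. I would encode that constraint by a Lagrange multiplier taking values in the space of holomorphic quadratic differentials $H^0(T^2,K^2)$; on a torus this space is one-dimensional and trivialized by $(dz)^2$, and this is exactly where the single complex constant $\lambda\in\C$ in the statement comes from.

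First I would compute the unconstrained first variation of $\mathcal W$. Rather than varying $q$ freely, I would vary the profile curve (equivalently the immersion) by a normal field; the induced variation of $q$ is then a second-order differential operator in that field, so that after integrating by parts twice the $L^2$-gradient of $16\pi\int|q|^2\,dy$ acquires the linear term $q''$ together with the cubic Willmore nonlinearity in $q$. The admissible variations are not arbitrary, however: $q$ is the conformal Hopf differential of an actual immersion, hence must respect the conformal Gau\ss-Codazzi equations, which in the equivariant situation are organized by the Fr\'enet frame of Lemma \ref{frenet}. As noted before Proposition \ref{confhopfdiff}, these equations determine the Schwarzian derivative $c$ from $q$ up to one real integration constant, and I would carry that constant through the computation: its contribution is precisely the term $8(|q|^2+C)q$, with $C$ the Schwarzian constant.

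Next I would extract the remaining two terms as consequences of symmetry and of the conformal constraint. The functional is invariant under reparametrization of the profile curve, and the imaginary part of the Schwarzian is not fixed pointwise but only through a conservation law; I would introduce the purely imaginary function $\xi$ as the corresponding first integral, defined up to an additive imaginary constant by $2\xi'=\bar q'q-q'\bar q$, and this produces the nonlocal term $-8\xi q$. Finally, pairing the conformal-class constraint against the constant holomorphic quadratic differential $\lambda\,(dz)^2$ and trivializing $K$ by $dz$ gives the right-hand side $2\Re(\lambda q)=\lambda q+\bar\lambda\bar q$; the real part appears because the constraint is a real-linear functional of the complex variation. The converse is immediate, since any $q$ solving \eqref{EL} is by construction critical under conformal variations.

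The main obstacle is the bookkeeping of the structure equations rather than any conceptual difficulty: one must write the conformal Gau\ss-Codazzi system explicitly in the Seifert-fibered Fr\'enet frame of Lemma \ref{frenet}, verify that the variation reproduces the cubic nonlinearity with the stated coefficients, and identify $C$ unambiguously with the Schwarzian integration constant. A secondary subtlety is that $\xi$ is only determined up to an imaginary constant, which must be absorbed consistently into $C$ and $\lambda$, and that all numerical factors — the $8$, the $2$, and the overall normalization $16\pi$ inherited from the definition of $q$ in Proposition \ref{confhopfdiff} — have to be tracked so that the complex Euler-Lagrange equation and the defining equation for $\xi$ are mutually consistent.
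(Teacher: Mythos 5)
Two preliminary facts frame this review. First, the paper itself contains \emph{no} proof of this theorem: it is quoted from \cite{BuPP}, with only the remark following it that the real part of \eqref{EL} is the actual Euler--Lagrange equation, the imaginary part is the Codazzi equation, the equation on $\xi$ is the Gauss equation, and $\xi = i\tfrac{mn}{4}H$ with $H$ the mean curvature. So your proposal must be measured against the derivation it gestures at, and there it has genuine gaps rather than ``bookkeeping left to do''. Your one correct structural insight is the origin of $\lambda$: the multiplier does live in $H^0(T^2,K^2)\cong\C$, trivialized by $(dz)^2$. But the central gap is that you never write down the conformal Gauss--Codazzi system, even though every coefficient of \eqref{EL} is supposed to come out of it. Your text assigns each term of the already-known answer a narrative source ($q''$ from integrating by parts, $8(|q|^2+C)q$ from the Schwarzian constant, $-8\xi q$ from a ``conservation law'') instead of deriving any of them, and the assignment for $\xi$ is not the correct one: in the paper $\xi$ is a geometric invariant of the immersion, namely $i\tfrac{mn}{4}H$, the equation $2\xi'=\bar q'q-q'\bar q$ is the Gauss equation, and the imaginary part of the first line of \eqref{EL} is the Codazzi equation --- structure equations satisfied by \emph{every} equivariant immersion, not first integrals produced by reparametrization invariance of the energy functional. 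A proof along your lines would have to first establish these structure equations in the frame of Lemma \ref{frenet}, and then show that the first variation of $16\pi\int_0^l|q|^2\,dy$ combines with them into exactly the complex equation \eqref{EL}; none of that computation is present, and it is precisely the content the paper outsources to \cite{BuPP}.

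The second gap is the Lagrange multiplier step itself, in both directions. The implication ``constrained Willmore $\Rightarrow$ there exists $\lambda$'' requires a constraint qualification: the derivative of the conformal-class map must have the appropriate range, and by \cite{BoPetP} this degenerates precisely for isothermic surfaces. Isothermic equivariant tori (tori of revolution, equivariant CMC tori) are a central case of this very paper, so this direction cannot be waved through by invoking ``the Lagrange multiplier principle'' without addressing the isothermic case. Symmetrically, your converse is not ``immediate'' in your own setup: you compute only with equivariant variations (variations of the profile curve), so a solution of \eqref{EL} is a priori critical only under \emph{equivariant} conformal variations, and upgrading this to criticality under all conformal variations needs an additional argument --- Palais' principle of symmetric criticality for the circle action, or the full two-dimensional Euler--Lagrange computation of \cite{BuPP}. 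As it stands, the proposal is a plausible outline of the strategy behind the cited result, but not a proof of either implication.
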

The real part of equation (\ref{EL}) is the actual Euler-Lagrange equation. The imaginary part of the equation is the Codazzi equation and the equation on $\xi$ is the Gau\ss  $\ $equation. The function $\xi$ we use is given by $\xi = i \tfrac{mn}{4}H,$ where $H$ is the mean curvature of the immersion in $S^3$.

\begin{Exa}\label{elastic}
In the case of tori  of revolution, we have $4q = \kappa,$ where $\kappa$ is the curvature of $\gamma$ in the hyperbolic plane modeled by the upper half plane.  The Euler-Lagrange equation reduces to the equation
$$\kappa'' + \tfrac{1}{2}\kappa^3 -\kappa = \lambda_1 \kappa,$$
with $\lambda_1\in \R$. This is the Euler-Lagrange equation for elastic curves in the hyperbolic plane, i.e., for critical points of the energy functional $E(\gamma) = \int_\gamma \kappa^2 ds $ with prescribed length. Free elastic curves corresponds to Willmore tori. \\ 

For Hopf tori we have that $4q = \kappa + 2i,$ where $\kappa$ is the curvature of the curve $\pi_{1,1}(f)$ in the round $S^2$ with curvature $4.$ The Euler-Lagrange equation for constrained Willmore tori reduces to
$$\kappa'' + \tfrac{1}{2}\kappa^3 + 2\kappa= \lambda_1 \kappa + \lambda_2,$$
with $\lambda_1, \lambda_2 \in \R.$
This is the Euler-Lagrange equation for constrained elastic curves in the round $S^2$ with curvature $4,$ where $(\lambda_1 + 2)$ is the length and $\lambda_2$ is the enclosed area constraint. Note that free elastic curves do not correspond to Willmore Hopf tori. For free elastic curves we have $\lambda_1 = -2$ and $\lambda_2 = 0,$ but for Willmore Hopf tori $\lambda_1 = \lambda_2 = 0.$
\end{Exa}

 \subsection[Associated Family]{Associated Family and Equivariant Constrained Willmore Immersions}\label{associated}
Let $f: T^2 \rightarrow S^3$ be a conformally immersed  constrained Willmore torus with conformal Hopf differential $q$.   Consider $f$ as a immersion from $\C$ into $S^3$ which is doubly periodic. By relaxing both periodicity conditions we obtain for $\mu \in S^1$ a circle worth of associated constrained Willmore surfaces $f_\mu$ to $f,$ the so called constrained Willmore associated family as follows.\\

\noindent
Let $q$ be a solution of \eqref{EL} and 
let $q_\mu,$ $\mu \in S^1$ be a family of complex functions given by
$$q_\mu =  q \mu$$  
Moreover, let
\begin{equation*}
\begin{split}
C_\mu &=  C + \text{Re}(\mu^2-1 \bar \lambda)\\
\xi_\mu &=  \xi + \text{Im}(\mu^2-1 \bar \lambda)\\
\lambda_\mu & = \bar\mu^2\lambda.
\end{split}
\end{equation*}

\noindent
Then $q_\mu$ satisfies equation \eqref{EL}  with parameters $C_\mu, \lambda_\mu$ and function $\xi_\mu$. In particular, the function $q_\mu$ and $\xi_\mu$ satisfies the Gau\ss-Codazzi equations for surfaces in $S^3$. Thus there exist a family of surfaces $f_\mu$ with conformal Hopf differential $q_\mu$ and mean curvature given by $\xi.$ The so constructed surfaces are automatically constrained Willmore.\\ 

\noindent
Further, surfaces with the same conformal Hopf differential and the same Schwarzian derivative (which is determined by the function $\xi$) differs only by a M\"obius transformation. Since both invariants depends only on one parameter if the initial surface is equivariant, 
the  surfaces in the associated family of equivariant  constrained Willmore surfaces are also equivariant and constrained Willmore. In general these surfaces are not compact, i.e., $f_\mu: \C \rightarrow S^3$ is not doubly periodic, even if the initial surface is.\\

A special class of constrained Willmore tori are the CMC tori. The following theorem characterizes all equivariant CMC tori.  
\begin{The}[\cite{R}]\label{CMC}
An equivariant constrained Willmore torus $f$ is isothermic if and only if  f is an equivariant CMC torus in a space form. In particular if $mn \neq 0,$ then $f$ is CMC in $S^3.$
 \end{The}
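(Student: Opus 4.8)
The plan is to reduce the whole statement to a single analytic condition on the conformal Hopf differential $q$, namely that its phase be constant, and then to translate this into a statement about the mean curvature via the function $\xi$. By definition $f$ is isothermic exactly when $q\mu$ is real for some $\mu\in S^1$; since by Remark \ref{kurvenq} the quantity $q$ depends only on the profile-curve parameter, writing $q=\rho e^{i\phi}$ with real functions $\rho,\phi$ this says precisely that $\phi$ is constant modulo $\pi$.

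The first and decisive step is to use the imaginary part of the Euler-Lagrange system \eqref{EL}, the Codazzi equation $2\xi' = \bar q' q - q'\bar q$. Substituting $q=\rho e^{i\phi}$ gives $\bar q' q - q'\bar q = -2i\,\phi'\rho^2$, so that $2\xi' = -2i\,\phi'\lvert q\rvert^2$; as $\xi$ is purely imaginary this is a real identity, and it shows that $\xi$ is constant along the profile curve if and only if $\phi'\lvert q\rvert^2\equiv 0$. The second ingredient is Proposition \ref{confhopfdiff}, which gives $4q=\kappa_{m,n}+i\Omega$ with $\Omega=2mn/\sqrt h$. Hence when $mn\neq 0$ we have $\Im(4q)=\Omega>0$ everywhere, so $q$ never vanishes and the previous identity becomes the clean equivalence $\xi'\equiv 0\Leftrightarrow \phi'\equiv 0$, with no umbilic points to treat separately. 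Combining this with the normalization $\xi=i\tfrac{mn}{4}H$ of the excerpt yields the chain: $f$ isothermic $\Leftrightarrow\ \phi$ constant $\Leftrightarrow\ \xi$ constant $\Leftrightarrow\ H$ constant $\Leftrightarrow\ f$ is CMC in $S^3$, which is exactly the assertion for $mn\neq 0$ and, read in either direction, proves both implications at once.

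In the remaining case $mn=0$ (tori of revolution) one has $\Omega\equiv 0$, so $4q=\kappa_{1,0}$ is already real and $f$ is automatically isothermic; such tori are CMC in a space form by \cite{B}, their profile curves being elastic curves in the hyperbolic plane as recorded in Example \ref{elastic}. Thus here both sides of the equivalence hold identically, so the stated ``if and only if'' is trivially true. For the general converse direction --- that an equivariant CMC torus in an arbitrary space form is isothermic --- I would either run the displayed chain backwards (constant mean curvature forces $\xi$ constant, hence $\phi'\lvert q\rvert^2\equiv 0$, hence constant phase) or, more conceptually, invoke the classical fact that CMC surfaces in space forms admit conformal curvature-line coordinates.

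The step I expect to demand the most care is pinning down the precise space form --- in particular showing that for $mn\neq 0$ the surface is CMC in the round $S^3$ rather than merely in some conformally equivalent space form. This is exactly what the normalization $\xi=i\tfrac{mn}{4}H$ (which ties $\xi$ to the mean curvature in the round metric) is designed to control, and one must check that passing to curvature-line coordinates --- equivalently the rotation $\mu_0=e^{-i\phi}$ in the associated family of Subsection \ref{associated}, which makes $q_{\mu_0}$ real --- does not move one out of the round sphere. Finally the totally umbilic case $q\equiv 0$, where $f$ is a round sphere, should be dispatched separately since both properties then hold vacuously.
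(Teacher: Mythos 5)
Your proposal is correct, and its core is the same computation as the paper's own proof --- but you prove strictly more than the paper does. The paper's proof consists of exactly your first step and nothing else: isothermic means $q\mu$ is real valued for some $\mu\in S^1$, hence $\xi'\equiv 0$ by the equation $2\xi'=\bar q'q-q'\bar q$ in \eqref{EL}, hence $H$ is constant since $4\xi=imnH$ and $mn\neq 0$; the full equivalence with ``CMC in a space form'' is not argued at all but cited from \cite{R}. Your additions are sound and are the genuinely new content. The observation that $\Im(4q)=\Omega=2mn/\sqrt h>0$ for $mn\neq 0$ makes $q$ nowhere vanishing, which upgrades the Codazzi identity $\bar q'q-q'\bar q=-2i\phi'|q|^2$ to a clean equivalence $\xi'\equiv 0\Leftrightarrow\phi'\equiv 0$, and so gives the converse ``CMC in $S^3$ $\Rightarrow$ isothermic'' with no umbilic set to handle; the case $mn=0$ is correctly dispatched via \cite{B}, tori of revolution being automatically isothermic. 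One caution on the remaining direction (CMC in an \emph{arbitrary} space form implies isothermic): your first suggestion, running the chain backwards, does not work, because $\xi=i\tfrac{mn}{4}H$ ties $\xi$ to the mean curvature in the round $S^3$ only, and a surface that is CMC with respect to a flat or hyperbolic conformal subgeometry need not have constant $H$ in $S^3$; your second suggestion --- holomorphicity of the classical Hopf differential, which on a torus is constant and hence either identically zero or nowhere zero --- is the argument that actually closes this case, and it is precisely the content the paper delegates to \cite{R}. Finally, the worry in your last paragraph is a non-issue: isothermic, as defined in the paper, is a property of $f$ itself (existence of $\mu\in S^1$ with $q\mu$ real), so no passage to the associated family $f_\mu$, and hence no check about remaining in the round sphere, is needed anywhere in your chain.
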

\begin{proof}
We only show the second part of the assertion. A surface is isothermic if and only if there exists a $\mu \in S^1$ with $ q \mu$ real valued. Thus $\xi' \equiv 0,$ see equation (\ref{EL}) Since $4\xi = imn H,$ we obtain that these surfaces are CMC in $S^3$ for $mn \neq 0$.
\end{proof}

\begin{The}
Except for  $q = const$, i.e.,  the corresponding torus is homogenous,  Hopf tori are never isothermic.
\end{The}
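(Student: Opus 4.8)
The plan is to reduce the isothermic condition to a pointwise algebraic constraint on the conformal Hopf differential and to exploit the special form that $q$ takes for Hopf tori. Recall from the Example following Proposition~\ref{confhopfdiff} that for a Hopf torus ($m=n=1$) one has $4q = \kappa + 2i$, where $\kappa$ is the curvature of the projected curve $\pi_{1,1}(\gamma)$ in the round $S^2$ of curvature $4$. Equivalently $q = \tfrac{\kappa}{4} + \tfrac{i}{2}$, so the imaginary part $\Im(q) = \tfrac12$ is a fixed nonzero constant, whereas the real part $\Re(q) = \tfrac{\kappa}{4}$ is in general non-constant in the profile parameter $y$ (by Remark~\ref{kurvenq}, $q$ is a function of $y$ alone).

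By definition $f$ is isothermic precisely when there is a \emph{single} $\mu \in S^1$, constant along the surface, with $q\mu$ real valued, i.e.\ $\Im(q\mu) \equiv 0$ for all $y$. Writing $\mu = \mu_1 + i\mu_2$ with $\mu_1^2 + \mu_2^2 = 1$, a direct computation gives
\[
\Im(q\mu) = \Re(q)\,\mu_2 + \Im(q)\,\mu_1 = \tfrac{\kappa}{4}\,\mu_2 + \tfrac12\,\mu_1.
\]
First I would rule out $\mu_2 = 0$: in that case the vanishing of $\Im(q\mu)$ forces $\tfrac12\mu_1 = 0$, hence $\mu_1 = 0$ as well, contradicting $|\mu|=1$. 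Therefore $\mu_2 \neq 0$, and solving $\Im(q\mu)=0$ yields $\kappa = -2\,\mu_1/\mu_2$, which is constant in $y$. Consequently $q$ is constant, which is exactly the excluded homogeneous case; contrapositively, if $q \neq \mathrm{const}$ then no such $\mu$ exists and $f$ is not isothermic.

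I do not anticipate a serious obstacle, since the argument is an elementary case distinction once the defining relation $4q = \kappa + 2i$ is in hand. The only point requiring care is the observation that a single, surface-constant $\mu$ must annihilate $\Im(q\mu)$ \emph{simultaneously} at every value of $y$; because $\Im(q) = \tfrac12$ is a fixed nonzero constant, this cancellation is impossible unless the varying quantity $\Re(q) = \tfrac{\kappa}{4}$ is itself constant. This is precisely where the Hopf case differs structurally from the tori of revolution ($m=1,n=0$), for which $4q=\kappa$ is real and $\mu = 1$ always works, explaining why those tori are automatically isothermic.
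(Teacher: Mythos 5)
Your proof is correct and follows essentially the same route as the paper: both arguments hinge on the observation that for Hopf tori $\Im(q)=\tfrac12$ is a nonzero constant, so the isothermic condition (a single $\mu\in S^1$ with $q\mu$ real) forces $\Re(q)=\tfrac{\kappa}{4}$, and hence $q$, to be constant. Your explicit case distinction on $\mu=\mu_1+i\mu_2$ is merely a more detailed writing-out of the paper's statement that $\Im(q)$ can be a constant multiple of $\Re(q)$ only if $\Re(q)$ is constant, and if anything handles the degenerate case $\Re(q)\equiv 0$ (i.e.\ $\mu=\pm i$) a bit more cleanly.
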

\begin{proof}
Since for $m= n= 1$ the curvature $\Omega$ of the Seifert fiber space is constant so the imaginary part of $q$ is constant. Thus Im$(q)$ is a constant multiple of  Re$(q)$ if and only if Re$(q)$ is constant. But Re$(q)$ is the curvature of the profile curve on the round $S^2,$ which is constant if and only if the curve is an arc of a circle. 
\end{proof}

For CMC tori there exists a further associated family - the CMC associated family. By definition isothermic surfaces have a conformal curvature line parametrization. In this parametrization the second fundamental form is diagonal and thus the Hopf differential is real valued and $\xi = 0$ by equation (\ref{EL}).
The Euler-Lagrange equation reduces to
$$q'' + 8q^3 + Cq = H q,$$
where $H$ is the constant mean curvature. The associated family for CMC tori is given by $C_r = C + r$ and $H_r = H+r$ for $r \in \R.$ This choice of parameters changes the Schwarzian derivative of the surface but not the conformal Hopf differential and satisfies the Gau\ss-Codazzi equations.  
Putting both associated families together we obtain

\begin{The}
Every isothermic and equivariant torus is in the associated family of a constrained Willmore cylinder of revolution. 
\end{The}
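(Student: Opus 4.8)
The plan is to reduce everything to the shape of the Euler--Lagrange equation and then to match two surfaces by the CMC associated family. First I would place the isothermic equivariant torus $f$ into a conformal curvature line parametrization, so that its conformal Hopf differential $q$ becomes real valued and, by the second line of \eqref{EL}, $\xi\equiv 0$. By Theorem~\ref{CMC} such an $f$ is an equivariant CMC torus in a space form, and the real part of \eqref{EL} collapses to the single real ODE
$$q'' + 8q^3 + Cq = Hq,$$
equivalently $q'' + 8q^3 = \sigma q$ with $\sigma:=H-C\in\R$. The point of this step is that all the data of an isothermic equivariant torus is encoded in a real solution of this one ODE together with the value of the Gau\ss\ integration constant $C$.

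Next I would identify this ODE with the profile equation of a cylinder of revolution. Setting $\kappa:=4q$, the equation $q''+8q^3=\sigma q$ becomes exactly $\kappa''+\tfrac12\kappa^3-\kappa=\lambda_1\kappa$ with $\lambda_1=\sigma-1$, which by Example~\ref{elastic} is the Euler--Lagrange equation for constrained elastic curves in the hyperbolic plane. Hence $\kappa=4q$ is realized as the hyperbolic curvature of an honest elastic curve, and the corresponding surface of revolution $\tilde f$ (the case $m=1,\,n=0$) is a constrained Willmore cylinder of revolution whose conformal Hopf differential is $q_{\tilde f}=\tfrac14\kappa=q$, the very same real function as $q$ itself. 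Since the profile curve need not close up, $\tilde f$ is in general only a cylinder, which is exactly why the statement speaks of a cylinder rather than a torus.

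It then remains to connect $f$ and $\tilde f$ through an associated family. Both are isothermic, so $\xi\equiv 0$ for each, and both carry the identical conformal Hopf differential $q$. Their Schwarzian derivatives are therefore determined by $q$ up to the single real constant $C$ coming from the Gau\ss\ equation. The CMC associated family $\tilde f_r$, given by $C_r=C_{\tilde f}+r$ and $H_r=H_{\tilde f}+r$, leaves $q$ unchanged, since the shift cancels in $q''+8q^3+C_rq=H_rq$, while the value $C_r$ ranges over all of $\R$. Choosing $r=C_f-C_{\tilde f}$ makes $\tilde f_r$ share both $q$ and the Schwarzian derivative with $f$, so that $f$ and $\tilde f_r$ agree up to a M\"obius transformation. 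This exhibits $f$ in the associated family of the constrained Willmore cylinder of revolution $\tilde f$.

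The main obstacle I anticipate is the second step: one must verify not merely that $q$ solves the elastic-curve ODE but that $\kappa=4q$ is the curvature of a genuine elastic curve in the hyperbolic plane and that the associated surface of revolution is a bona fide constrained Willmore cylinder with the prescribed $q$. A secondary point requiring care is the Schwarzian matching in the last step, namely checking that $\xi\equiv0$ indeed reduces the remaining freedom in the Schwarzian to the single constant $C$ and that $r\mapsto C_{\tilde f}+r$ attains the required value, so that no residual conformal invariant obstructs the identification of $f$ with $\tilde f_r$.
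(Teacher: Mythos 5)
There is a genuine gap, and it sits in your very first step: you dispose of the phase of $q$ by \emph{reparametrizing} $f$ in conformal curvature line coordinates, whereas the paper disposes of it by \emph{rotating} $q$ through the constrained Willmore associated family of Section 2.3 (passing to $q_\mu = q\mu$ with adjusted $\xi_\mu$, $C_\mu$, $\lambda_\mu$). These are different operations, and the difference is exactly what the theorem is about. For an isothermic equivariant torus with $mn\neq 0$ one has $\Im q = \tfrac{mn}{2\sqrt h}>0$ in the equivariance-adapted coordinate $z=x+iy$ (Proposition \ref{confhopfdiff}), so the isothermic phase $\mu$ is never $\pm 1$ and the curvature line coordinates arise from the adapted ones by a nontrivial rotation $w=\zeta z$. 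After this reparametrization $q$ is indeed real, but it is no longer a function of the profile parameter: it depends on the skew combination $\Im(\bar\zeta w)$. Hence (i) you cannot invoke \eqref{EL}, which is an ODE in $y$ derived in the adapted parametrization (also, the second line of \eqref{EL} only gives $\xi'=0$; $\xi=0$ comes from the imaginary part of the first line once $q$ is real); and, more seriously, (ii) your closing step collapses: your reparametrized $f$ and the family member $\tilde f_r$ of the cylinder do \emph{not} carry ``the identical conformal Hopf differential'' as functions on $\C$ --- they differ by the domain rotation $\zeta$ --- so the rigidity principle (same $q$ and same Schwarzian imply equality up to M\"obius transformation) does not apply.

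In fact the statement you end up asserting, namely that the torus $f$ itself coincides up to M\"obius transformation with a member of the CMC-only associated family of the cylinder, is in general false when $mn\neq 0$: if $f=M\circ\tilde f_r\circ\phi$ with $M$ M\"obius and $\phi$ conformal, comparing the two equivariant Hopf differentials forces $\phi$ to be affine with rotational part $\pm1$, hence forces the adapted $q$ of $f$ to be real, i.e.\ $mn=0$; yet by Theorem \ref{CMC} the theorem is meant to cover precisely the equivariant CMC tori in $S^3$ with $mn\neq0$. This is why the theorem claims only membership in the \emph{combined} family (``putting both associated families together''), and why the paper's proof begins by rotating $q$ by $\mu$: that rotation produces a genuinely different, associated equivariant surface $f_\mu$ whose Hopf differential $q\mu$ is real \emph{and} still a function of $y$, with $\xi_\mu=0$. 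Your remaining steps --- identifying $\kappa=4q\mu$ with the curvature of a constrained elastic curve in the hyperbolic plane, producing the constrained Willmore cylinder of revolution, and shifting $C_r=C+r$ to match the Schwarzian constant via Theorem 3.3 of \cite{BuPP} --- then apply verbatim to $f_\mu$, and $f$ lies in the associated family of that cylinder. With this single correction your argument becomes essentially the paper's proof.
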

\begin{proof}
First rotate $q$ by $\mu$ such that $q \mu$ is real and  then
choose an $r \in \R$ such that $C = -\tfrac{1}{4}.$ The corresponding $q$ is the conformal Hopf differential of a  cylinder of revolution.  By Theorem (3.3) of \cite{BuPP} we have that isothermic surfaces with the same conformal Hopf differential lie in the same associated family as isothermic surfaces. This associated family coincides in our case with the associated family of CMC surfaces.
\end{proof}

\section{Spectral Curves for Conformal Immersions into $S^4$ with Trivial Normal Bundle}
We use the model of the $4-$sphere given by the projective geometry of the quaternionic projective line $\H P^1,$  
see \cite{BuFLPP} for further reading.
The map $f$ is given by a quaternionic line subbundle $L := f^*\mathcal T$ of the trivial $\H^2-$bundle $V := T^2 \times \H^2, $ where $\mathcal T$ is the tautological bundle of $\H P^1$. Using affine coordinates we can consider $S^3 \subset \H \hookrightarrow \H P^1.$ 
An oriented round $2-$sphere in $S^4$ is given by a linear map $\mathbbm S: \H^2 \rightarrow \H^2$ with $\mathbbm S^2 = -1,$ i.e., a complex structure on $\H^2$. For given $f$  there exist a natural map $S$ from the torus into the space of oriented $2-$spheres - the conformal Gau\ss $ $  map. It assigns to every point $p$ of the surface a $2-$sphere $\mathbbm S_p$ through the point $f(p)$ with the same tangent plane and the same mean curvature.  The pair $(V, \mathbbm S)$ is then a complex quaternionic vector bundle.
 The property $f(p) \in \mathbbm S_p$ translated into the quaternionic language is   $\mathbbm S_p L_p \subset L_p.$ 
Thus the projection of $S$ to the quotient bundle $V/L$ defines a complex structure on the quaternionic line bundle $V/L$ . Moreover, the projection of the $\bar K-$part of the trivial connection on $(V,\mathbbm S)$  to $V/L$ gives a 
quaternionic linear map 
$$D: \Gamma(V/L) \rightarrow \Gamma(\bar K V/L)$$
satisfying the Leibniz rule. Such a map is also called a quaternionic holomorphic structure and the sections in the kernel of $D$ are called holomorphic sections. Let $(1,0)$ and $(0,1)$ be the canonical basis of $\H^2$ viewed as constant sections of $V.$ Then the projection of both sections to $V/L$ are holomorphic. Further the quotient of these sections is $f$ since
$$\pi_{V/L}(0,1) + \pi_{V/L}(1,0)f = 0_{V/L}.$$

\begin{Def}
A section $\psi$ with monodromy of $V/L$ is a section of  the pull-back  $\widetilde{V/L}$ of $V/L$ to the universal covering $\C$ of $T^2 = \C/\Gamma$ with 
$$\omega^*\psi(z) = \psi (z+ \omega) = \psi(z) h_\omega, \text{ for all } z \in T^2 \text { and } \omega \in \Gamma ,$$
where $h: \Gamma \rightarrow \H_*$ is a representation. We call $h$ the monodromy representation of $\psi$ and $h_\gamma$ the  monodromy  of $\psi$ along $\omega$.  
\end{Def}
Since $h$ is a representation it is uniquely determined by its value on the generators of the lattice $\Gamma.$ We denote a choice of generators by $\omega_1$ and $\omega_2.$\\

Following \cite{BLPP} the spectral curve $\Sigma$ of a conformal immersion $f: T^2 \rightarrow S^4$ with trivial normal bundle is  the  
Riemann surface parametrizing all possible monodromies of locally holomorphic sections of $(V/L, D).$ Let $H^0(\widetilde{V/L})$ denote the space of holomorphic sections of $V/L$ with monodromy. 
Consider a section $\psi \in H^0(\widetilde{V/L})$ and let $h$ denote its monodromy representation. For a constant $\lambda \in \H$ the section $\tilde \psi = \psi \lambda$ is also holomorphic. But its monodromy representation is given by $\tilde h = \lambda^{-1} h \lambda$.
Since $\Gamma$ is a lattice, we have 
$$h_{\omega_1} h_{\omega_2} = h_{\omega_2} h_{\omega_1}.$$
Thus the imaginary parts of the quaternions $h_{\omega_1}$ and $h_{\omega_2}$ are linearly dependent over $\R$. 
Therefore it is  always possible to choose a $\lambda$ 
such that $\lambda^{-1} h \lambda : \Gamma \rightarrow \C_*.$ 
\begin{Def}
Let $f$ be a conformally immersed torus in $S^4$ with trivial normal bundle.
 The normalization $\hat \Sigma$ of the analytic variety given by 
$$ Spec(V/L):= \{(a,b) \in \C_* \times \C_* | \text{there exist a } \psi \in H^0(\widetilde{V/L})\text{ with } h_{\omega_1} = a \text{ and } h_{\omega_2} = b\}$$
is called the spectral curve of $f$. 
\end{Def}
Let $ \psi \in H^0(\widetilde{V/L})$ be a holomorphic section with complex monodromy $h.$ Then the multiplication by the quaternion $\jj$ gives a holomorphic section with monodromy $\bar h.$ Thus  it defines a anti-holomorphic involution $\rho$ on $\hat \Sigma.$
If the surface lies in $S^3 \subset S^4$ we get:
\begin{Lem}[\cite{FLPP} or \cite{H} section 8.]\label{sigma11}
Let $f: T^2 \rightarrow S^3$ be a conformal immersion and $V/L$ its quotient bundle. Then  the map
$$\sigma: Spec(V/L) \rightarrow Spec(V/L), h \mapsto h^{-1}$$ is well-defined and holomorphic.
\end{Lem}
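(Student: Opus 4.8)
The plan is to realize the hypothesis $f(T^2)\subset S^3$ as a reality condition coming from an indefinite quaternionic Hermitian form, and then to produce from that form a second anti-holomorphic involution of $\hat\Sigma$ which, composed with the involution $\rho\colon h\mapsto\bar h$ coming from multiplication by $\jj$, yields $\sigma\colon h\mapsto h^{-1}$. First I would fix the form $\langle v,w\rangle=\bar v_1 w_1-\bar v_2 w_2$ of signature $(1,1)$ on $\H^2$; under the affine embedding $S^3\subset\H\hookrightarrow\HP^1$ the subset $S^3$ is exactly the set of $\langle\,\cdot\,,\,\cdot\,\rangle$-null lines, so the hypothesis says precisely that $L=f^*\mathcal T$ is isotropic, $\langle L,L\rangle=0$. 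Because $L$ is null, $v\mapsto\langle v,\cdot\rangle$ carries $L$ onto its annihilator $L^\perp\subset V^*$ and hence induces an anti-linear bundle isomorphism identifying $V/L$ with the dual quotient $V^*/L^\perp$ of the dual (annihilator) surface $L^\perp$.

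The heart of the argument is to check that this identification respects the quaternionic holomorphic structures. The trivial connection on $V$ preserves the constant form $\langle\,\cdot\,,\,\cdot\,\rangle$, and for a surface contained in $S^3$ the mean curvature spheres $\mathbbm S_p$ are themselves $2$-spheres lying in $S^3$; this forces $\mathbbm S$ to be compatible with the form (skew-adjoint for $\langle\,\cdot\,,\,\cdot\,\rangle$). Since $D$ is obtained by projecting the $\bar K$-part of the trivial connection through $\mathbbm S$, these two facts together should show that the above isomorphism intertwines $D$ on $V/L$ with the conjugate of the dual holomorphic structure $D^*$ on $V^*/L^\perp$. Consequently a section $\psi\in H^0(\widetilde{V/L})$ with monodromy $h$ is carried to a holomorphic section of $\widetilde{V^*/L^\perp}$, and tracking scalars through the anti-linear form together with the canonical nondegenerate pairing between $V/L$ and $V^*/L^\perp$ — under which the monodromies of paired holomorphic sections are reciprocal — shows that the image section has monodromy $\bar h^{-1}$. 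This defines an anti-holomorphic involution $\tau\colon\hat\Sigma\to\hat\Sigma$, $h\mapsto\bar h^{-1}$.

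Finally I would combine the two involutions. The excerpt already furnishes the anti-holomorphic involution $\rho\colon h\mapsto\bar h$ from multiplication by $\jj$, so in particular $\mathrm{Spec}(V/L)$ is invariant under complex conjugation. Setting $\sigma:=\rho\circ\tau$ gives $\sigma(h)=\overline{\bar h^{-1}}=h^{-1}$; as a composition of two anti-holomorphic involutions it is holomorphic, and since both $\rho$ and $\tau$ preserve $\mathrm{Spec}(V/L)$ so does $\sigma$, which is exactly the asserted well-definedness on the spectral curve.

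I expect the main obstacle to be the compatibility in the middle paragraph: verifying that the metric identification $V/L\cong V^*/L^\perp$ is genuinely a map of quaternionic holomorphic bundles (i.e.\ that $\mathbbm S$ is skew for $\langle\,\cdot\,,\,\cdot\,\rangle$ and that the $\bar K$-part defining $D$ dualizes correctly), and the bookkeeping that pins the monodromy to $\bar h^{-1}$ rather than merely $\bar h$. The inversion is the delicate point: it is invisible from the form alone and appears only after pairing a holomorphic section against the holomorphic sections of the dual surface, so the argument must genuinely use both that $\langle\,\cdot\,,\,\cdot\,\rangle$ gives the conjugate-linear identification and that the canonical pairing of $V/L$ with $V^*/L^\perp$ descends to $T^2$.
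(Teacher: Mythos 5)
The paper itself offers no proof of this lemma: it is imported verbatim from \cite{FLPP} and section 8 of \cite{H}, so your attempt can only be measured against the argument in those sources. Your overall architecture does match that argument: realizing $S^3$ as the null quadric of the signature $(1,1)$ Hermitian form $\langle v,w\rangle=\bar v_1w_1-\bar v_2w_2$, noting that $L=f^*\mathcal T$ is isotropic, using compatibility of the mean curvature sphere congruence $\mathbbm S$ with the form, and finally writing $\sigma=\rho\circ\tau$ as a composition of two anti-holomorphic involutions (which correctly explains both well-definedness and holomorphicity). That frame is the right one.

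The gap is in the step that is supposed to produce the inversion. First, the ``canonical nondegenerate pairing between $V/L$ and $V^*/L^\perp$'' does not exist: the evaluation pairing $V^*\times V\to\H$ descends to $L^\perp\times (V/L)$ and to $(V^*/L^\perp)\times L$ (one argument must annihilate the other), but never to $(V^*/L^\perp)\times(V/L)$, and a line bundle has no canonical pairing with itself. Second, the scalar bookkeeping does not do what you claim: with the standard right $\H$-module structure on $V^*$ (namely $\alpha\cdot\lambda:=\bar\lambda\,\alpha$), the map $v\mapsto\langle v,\cdot\,\rangle$ is quaternionic \emph{linear}, so it carries a section with complex monodromy $h$ to a section with monodromy $h$ --- neither the conjugation nor the inversion is created by the identification $V/L\cong V^*/L^\perp$ alone. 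The inversion can only come from a genuine duality statement: if $\alpha\in\Gamma(\widetilde{L^\perp})$ has multiplier $g$ and $\psi\in\Gamma(\widetilde{V/L})$ has multiplier $h$, then $\gamma^*\bigl(\alpha(\psi)\bigr)=\bar g\,\alpha(\psi)\,h$, so a nonzero pairing descending to $T^2$ forces $\bar gh=1$; but one must then \emph{prove existence}, for generic $h\in Spec(V/L)$, of a holomorphic partner with nonvanishing pairing, which is exactly the quaternionic Serre duality / Riemann--Roch input of \cite{FLPP} (and is what \cite{H}, section 8, invokes, via prolongations, whose derivatives lie in $\Omega^1(L)$ and so can be fed into the form). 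As written, your proposal asserts this reciprocity as a known property of a pairing that is not well defined; since that reciprocity \emph{is} the lemma, the argument is circular at its only nontrivial point.
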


The spectral curve $\hat \Sigma$ defined here is non compact. If $\hat \Sigma$ has finite genus, it can be compactified  to a Riemann surface $\Sigma$ by adding two points interchanged by the involution $\rho$, These points are denoted by $0$ and $\infty$.

\begin{Def}
A conformal immersion $f: T^2 \rightarrow S^3$ such that its spectral curve  has finite genus $g$ is called a finite type immersion and $g$ is called its spectral genus.
\end{Def}

Since our interest lies in the class of constrained Willmore tori in the $3-$sphere, whose spectral curve are by \cite{B} connected and have  finite genus, we restrict ourselves to this case in the following.

 \subsection{The Reconstruction}
 The spectral curve itself contains not enough information to reconstruct the immersion. One still needs a $T^2-$family of holomorphic line bundles which comes from the following construction. 
To a generic point $h \in \hat \Sigma$ there exist by \cite{BLPP} a holomorphic section $\psi_h$ in $V/L$ with complex monodromy $h$ unique up to complex scale.
Thus we can assign to  a generic $h \in \hat \Sigma$ the complex line given by $\psi_h \C$. Although $\psi_h$ is not well-defined on the torus, the line given by $\psi_h \C$ is.  The so defined line bundle $\mathcal L\subset \hat \Sigma \times \Gamma(V/L)$ extends holomorphically through the exceptional points of $\hat \Sigma,$ where we have a higher dimensional space of holomorphic sections.  The kernel bundle $\mathcal L$ is compatible with the involutions $\rho$ and $\sigma.$ Since the holomorphic section with monodromy $\bar h$ is given by $\psi_h \jj$ we have $\rho^* \mathcal L = \mathcal L \jj.$ \\

$\mathcal L $ does not extend to $\Sigma$ by the asymptotic properties of $\hat \Sigma$ shown in \cite{BoPP}. This can be repaired by fixing a point $x \in T^2$ and evaluating the  holomorphic section $\psi_h$ at each point of $\Sigma $ in $x. $ In other words, to a fixed $x \in T^2$ we associate to a generic point $h \in \hat \Sigma$ the line given by  $ \psi_h (x) \C.$ The holomorphic line bundle corresponding to this construction is denoted by $\mathcal L_x \subset \Sigma \times \C^2,$ see \cite{BLPP}. 

\begin{The}[\cite{BoPP} theorem 5.6]\label{Jacobi}
The map 
$$\Psi: T^2 \rightarrow Jac(\Sigma), x \mapsto \mathcal L_x \mathcal L_{x_0}^{-1}$$ for a fixed base point $x_0 \in T^2$ is a group homomorphism. 
\end{The}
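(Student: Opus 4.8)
The plan is to reduce the statement to the classical rigidity of holomorphic maps between complex tori, once one has established that the $x$-dependence of the kernel bundle linearizes on $\mathrm{Jac}(\Sigma)$. First I would record that $\mathrm{Jac}(\Sigma)=\mathrm{Pic}^0(\Sigma)$ is a complex torus and that $T^2=\C/\Gamma$ is itself a one-dimensional complex torus. If one shows that $\Psi$ is holomorphic and that $\Psi(x_0)=\mathcal L_{x_0}\mathcal L_{x_0}^{-1}=\mathcal O$ is the identity of $\mathrm{Jac}(\Sigma)$, then, since every holomorphic map of complex tori is the composition of a group homomorphism with a translation, the affine decomposition is pinned down by the value at the base point, and $\Psi$ is forced to be a homomorphism. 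Thus the whole theorem rests on two ingredients: the holomorphic dependence of $\mathcal L_x$ on $x$, and the vanishing of the translation part.

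The core of the argument is to make the $x$-dependence of $\mathcal L_x$ explicit. Fix a generic $h\in\hat\Sigma$ and the holomorphic section $\psi_h\in H^0(\widetilde{V/L})$, unique up to complex scale; on the universal cover $\C$ with coordinate $z$ the line $\mathcal L_x|_h=\psi_h(x)\,\C\subset\C^2$ is the evaluation at $z=x$. I would use the Baker--Akhiezer description of \cite{BLPP}: after compactifying $\hat\Sigma$ by the two points $0,\infty$, the function $\psi_h(x)$ has prescribed essential singularities at $0$ and $\infty$ whose leading exponential rates are linear in the coordinate $x$ (this is exactly where the trivial connection on $(V,\mathbbm S)$ and its $\dbar$-part $D$ enter). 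Regarding $\psi_\cdot(x)$, for fixed $x$, as a section over $\Sigma$, its divisor of zeros defines $\mathcal L_x$; comparing with the divisor for $x_0$, the quotient $\psi_h(x)/\psi_h(x_0)$ is a meromorphic function with monodromy whose divisor represents $\mathcal L_x\mathcal L_{x_0}^{-1}$. The standard Krichever/Riemann reciprocity computation then identifies the Abel--Jacobi image of this divisor class as a fixed linear flow evaluated at $x-x_0$, with direction determined by the exponential rates at $0$ and $\infty$. Linearity in $x$ is precisely holomorphicity together with the vanishing of the translation part (with $x_0$ taken as origin), which yields the homomorphism property.

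I expect the main obstacle to be the analytic input in the middle step: controlling the asymptotic behaviour of $\psi_h$ at the two marked points and proving that $\mathcal L_x$ extends holomorphically across $0,\infty$ and varies holomorphically with $x$. This is where one must invoke the asymptotic analysis of \cite{BoPP}; without it one obtains only a real-analytic map, and the torus-rigidity argument does not apply. A secondary subtlety is the base-point normalization: one must verify that the linear flow direction is compatible with the lattice $\Gamma$, so that $\Psi$ descends to $T^2$, and that, with $x_0$ playing the role of the origin, the translation part genuinely vanishes rather than merely being absorbed into the affine part. Once holomorphicity and this normalization are secured, the conclusion is immediate from the rigidity of maps between complex tori.
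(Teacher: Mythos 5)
Your proof stands or falls with the claim that $\Psi$ is holomorphic, and that claim is false; this is the gap. The asymptotic analysis of \cite{BoPP} and \cite{BLPP} that you invoke produces exactly the opposite structure: writing the holomorphic structure in the splitting $\H = \C \oplus \jj\C$, the two ends of $\hat\Sigma$ are modelled on the vacuum solutions $u_1 = e^{az}$ and $u_2 = e^{b\bar z}$, so the exponential rate of $\psi_h$ is linear in $z$ at one marked point and linear in $\bar z$ at the other (the two ends are interchanged by $\rho$). Consequently the lift of $\Psi$ to universal covers has the form $z \mapsto Uz + V\bar z$, where $U$ and $V$ are the (nonzero) tangent directions to the Abel--Jacobi image of $\Sigma$ at the two marked points: the map is $\R$-linear but not $\C$-linear, hence real-analytic but not holomorphic. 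You can also see the failure without any asymptotics, from a symmetry recorded in this very paper: $\rho^*\mathcal L = \mathcal L\jj$, and right multiplication by $\jj$ is $\C$-antilinear, so $\Psi$ takes values in the fixed-point set of the antiholomorphic group involution $\xi \mapsto \overline{\rho^*\xi}$ of $\text{Jac}(\Sigma)$. That fixed-point set is totally real, and a holomorphic map into a totally real submanifold has identically vanishing derivative; thus a holomorphic $\Psi$ would be constant. So the rigidity theorem for holomorphic maps of complex tori --- the engine of your argument --- is not available, and it has no smooth or real-analytic substitute (generic real-analytic maps between tori are not affine).

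The repair is not to patch the holomorphicity claim but to delete that scaffolding: your middle paragraph already contains the actual proof. The Krichever-type reciprocity computation you sketch shows that the derivative of $x \mapsto \mathcal L_x\mathcal L_{x_0}^{-1}$ is independent of $x$ (it is the fixed pair of directions $U$, $V$ attached to the two marked points). Constancy of the derivative means the lifted map $\C \to H^1(\Sigma,\mathcal O)$ is $\R$-linear; together with lattice compatibility (which you correctly reduce to the complex monodromy $\psi_h(z+\omega) = \psi_h(z)h_\omega$, so that $\mathcal L_{x+\omega} = \mathcal L_x$) and the normalization $\Psi(x_0) = \mathcal O$, real-linearity immediately gives the homomorphism property --- no torus rigidity needed. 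For calibration: the paper under review offers no internal proof of this statement; it is quoted from \cite{BoPP} (Theorem 5.6), whose content is precisely this linearization of the flow of line bundles, established with the analytic control at the marked points that you correctly single out as the hard technical input. As written, however, your argument fails at the point where holomorphicity is asserted.
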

This theorem means that the conformal immersion $f$ induces a linear map into the Jacobian of its spectral curve. Thus  it is possible to construct $f$ explicitly in terms of algebraic data on $\Sigma.$  

\begin{Rem}
The bundle $\mathcal L$ over $\Sigma$ can be lifted to a line subbundle $\tilde {\mathcal L} $ of $\Sigma \times\Gamma(\widetilde V)$ by assigning to a generic point $h \in \hat \Sigma$ the complex line given by the so called  prolongation $\tilde \psi_h$ of $\psi_h,$ see \cite{BLPP}. The monodromy  of $\tilde \psi_h$ is also $h$.  Further the bundle $\tilde {\mathcal L} _x$, for  a fixed $x \in T^2, $ also extends to $\Sigma.$
\end{Rem}

Let $\Sigma$ be a connected compact Riemann surface with an anti-holomorphic involution $\rho.$ Further fix a real subtorus $Z = \Psi(T^2)$ of dimension $0,1$ or $2$ of the Jacobian of $\Sigma.$ For $x \in T^2$ the line bundle $\tilde {\mathcal L}_x$ over $\Sigma$ is by construction a complex holomorphic line subbundle of $\Sigma \times \C^4.$ Thus it defines a map from $\Sigma$ to $\C P ^3.$ A  quaternionic structure on $\C^4$ is a real linear endomorphism $\jj$ with $\jj^2 = -1$ anti-commuting with $i$.  By fixing such a quaternionic structure $\jj$ on $\Sigma \times \C^4$ we obtain a canonical  isomorphism between $\C^4$ and $\H^2.$ This isomorphism induces a map $\pi_{\H}$ between  $\CP^3$ and $\H P^1$ which is called twistor projection. In our case the quaternionic structure is given by $\rho.$ The main theorem for the reconstruction of a conformally immersed torus is the following.
\begin{The}[\cite{BLPP} theorem 4.2]\label{Immersionreconstruction}
Let $f: T^2 \rightarrow S^3$ be a conformal immersion whose spectral curve $\Sigma$ has finite genus. Then there exist a map
\begin{equation}
 F : T^2 \times \Sigma \rightarrow \C P^3, 
\end{equation}
such that 
\begin{itemize}
\item $F(x, - ): \Sigma \rightarrow \C P^3$ is an algebraic curve, for all $x \in T^2.$\\
\item The original conformal immersion $f : T^2 \rightarrow S^3$ is obtained by the twistor projection of the evaluation of $F$ at the points at infinity:
$$f = \pi_{\H} F(-, 0) = \pi_{\H} F(-, \infty).$$
\\
\end{itemize}
\end{The}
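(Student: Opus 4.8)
\emph{Construction of $F$.} The plan is to build $F$ directly from the prolongation bundle $\tilde{\mathcal L}$ and then verify its two asserted properties separately. For a fixed $x \in T^2$ I would take $F(x,\cdot)$ to be the classifying map of the holomorphic line subbundle $\tilde{\mathcal L}_x \subset \Sigma \times \C^4$: using the identification $\H^2 \cong \C^4$ induced by the quaternionic structure $\jj = \rho$, I assign to a generic $h \in \Sigma$ the complex line $F(x,h) := [\tilde\psi_h(x)] \in \C P^3$ spanned by the value at $x$ of the prolongation $\tilde\psi_h$ of the holomorphic section with monodromy $h$. By the Remark preceding the statement, $\tilde{\mathcal L}_x$ extends holomorphically across the exceptional points of $\hat\Sigma$, and in particular across $0$ and $\infty$, so that $F(x,\cdot)$ is globally defined on all of $\Sigma$; any point where $\tilde\psi_h(x)$ happens to vanish is removable because $\tilde{\mathcal L}_x$ is a genuine line subbundle. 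As a map on $T^2 \times \Sigma$, $F$ depends smoothly on $x$ and holomorphically on $h$.

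\emph{Algebraicity.} For the first bullet I would argue as follows. Since $\Sigma$ is a compact Riemann surface and $\tilde{\mathcal L}_x$ is a holomorphic line subbundle of the trivial bundle $\Sigma \times \C^4$, its classifying map $F(x,\cdot):\Sigma \to \C P^3$ is holomorphic. A holomorphic map from a compact Riemann surface into projective space is algebraic by Chow's theorem (equivalently, by GAGA), so its image is an algebraic curve. This step is essentially formal once $F$ has been correctly defined.

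\emph{Recovering $f$.} For the second bullet the goal is to identify the quaternionic line $\pi_{\H} F(x,0)$ with $L_x = f(x)$. Here I would use two facts. First, the compatibility $\rho^*\mathcal L = \mathcal L\jj$ (and the analogous relation for the prolongation), together with $\rho(0) = \infty$, forces $F(x,\infty)$ to be the image of $F(x,0)$ under the quaternionic structure; since the twistor projection of a complex line $\ell$ is the quaternionic line spanned by $\ell$ and $\jj\,\ell$, the two points $0$ and $\infty$ project to the same quaternionic line, which accounts at once for the coincidence of the two evaluations $\pi_{\H}F(-,0) = \pi_{\H}F(-,\infty)$. Second, and this is the crux, I must show that the prolongation $\tilde\psi_h$ degenerates into the fibre $L$ as $h \to 0$: the points at infinity are precisely the ends where the monodromies blow up and the holomorphic sections with monodromy concentrate on $L$, so that $\tilde\psi_0(x)$ spans a complex line inside $L_x \otimes_\R \C$. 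Because $L_x$ is a quaternionic line, $\tilde\psi_0(x)$ and $\jj\,\tilde\psi_0(x)$ together span all of $L_x$, whence $\pi_{\H} F(x,0) = L_x = f(x)$.

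\emph{Main obstacle.} The hard part is exactly this last asymptotic statement: proving that the prolongation bundle limits into $L$ at the two added points $0$ and $\infty$. This requires the fine asymptotic analysis of holomorphic sections with monodromy of $(V/L, D)$ as the spectral parameter degenerates — the WKB/plane-wave behaviour at the two ends of $\hat\Sigma$ and the resulting control of the prolongations — which is the analytic heart of \cite{BLPP} and \cite{BoPP}. Everything else (the definition of $F$, its dependence on both variables, and algebraicity) follows formally; the agreement $f = \pi_{\H} F(-,0)$ hinges entirely on correctly pinning down the limiting position of the kernel line at $0$ and $\infty$.
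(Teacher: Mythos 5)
The paper itself offers no proof of this statement: it is quoted verbatim from \cite{BLPP}, Theorem 4.2, so the only ``proof'' in the paper is the citation. Your outline does reproduce the strategy of the cited proof: define $F(x,\cdot)$ as the classifying map of the extended prolongation bundle $\tilde{\mathcal L}_x \subset \Sigma \times \C^4$, get algebraicity of $F(x,\cdot)$ from compactness of $\Sigma$ plus Chow's theorem, and deduce $\pi_{\H}F(-,0)=\pi_{\H}F(-,\infty)$ from the compatibility $\rho^*\mathcal L = \mathcal L \jj$ together with the fact that $\rho$ interchanges $0$ and $\infty$. These parts are sound, modulo a notational slip: $L_x$ is already a complex $2$-plane in $\C^4\cong\H^2$, so writing $L_x\otimes_\R\C$ is wrong; the correct statement is simply that the limiting complex line lies inside $L_x$, and then $\pi_{\H}F(x,0)=L_x$ follows from the definition of the twistor projection.

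However, as a proof your proposal has a genuine gap, and you flag it yourself: the claim that the fiber of $\tilde{\mathcal L}_x$ at the added points $0$ and $\infty$ is a complex line contained in $L_x$. Nothing stated in this paper yields that formally. The Remark before the theorem gives only that $\tilde{\mathcal L}_x$ extends holomorphically to $\Sigma$; it says nothing about \emph{where} the limiting line sits in $\C^4$. Pinning that down requires the asymptotic analysis of holomorphic sections with monodromy of $(V/L,D)$ as the spectral parameter tends to the ends of $\hat\Sigma$ -- the plane-wave behaviour $e^{ax+b\bar x}$ of the sections and the fact that the defining property of the prolongation ($d\tilde\psi$ takes values in $L$) forces the prolonged line to collapse into $L$ in this limit. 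This is the analytic core of \cite{BLPP} (and of the asymptotics in \cite{BoPP}), and it is exactly the assertion $f=\pi_{\H}F(-,0)$ being proved. Deferring it means your write-up establishes only the formal frame around the theorem (definition of $F$, algebraicity, the $\rho$-symmetry between the two ends), not the theorem itself; so as a blind proof attempt it is an accurate road map of \cite{BLPP} rather than a proof.
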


 \section{Reduction to the Equivariant Case}

The spectral curve is the space of possible monodromies of locally holomorphic sections of $V/L$. For equivariant conformal immersions $f: T^2 \rightarrow S^3$ the holomorphic structure $D$ on $V/L$ is translational invariant. Thus the partial differential equation $D\psi = 0,$ where $\psi$ is a section of $V/L$ with monodromy can be reduced to an ordinary differential equation. We want show:

\begin{The}\label{hyperelliptic}
The spectral curve of an equivariant immersion $f: T^2 \rightarrow S^3$ is a double covering of $\C$.
\end{The}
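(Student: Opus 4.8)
The plan is to use the $x$-translation invariance of $D$ to separate variables and reduce $D\psi=0$ to a first-order linear ODE in the profile-curve parameter $y$ on a rank-two complex bundle; the $y$-period monodromy of this ODE is then a $2\times 2$ matrix depending holomorphically on a spectral parameter $\mu\in\C$, and its characteristic polynomial, being quadratic in the monodromy eigenvalue, exhibits the spectral curve as a double cover of $\C$.

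First I fix the holomorphic coordinate $z=x+iy$ of Proposition \ref{equivariant}, in which the conformal Hopf differential and hence the whole holomorphic structure $D$ depend only on $y$ (Remark \ref{kurvenq}), so that $D$ is invariant under $x\mapsto x+t$. The decisive structural input is that $D$ is \emph{quaternionic linear}, as recorded before the definition of $Spec(V/L)$: for constant $\lambda\in\H$ the section $\psi\lambda$ is again holomorphic. Having normalised the monodromy representation, by the choice of $\lambda$, to take values in a single complex line $\C=\R\oplus\R\ii\subset\H$, I equip $V/L$ with the complex structure $J_0$ given by right multiplication by $\ii$. Since $D$ is quaternionic linear it commutes with $J_0$, hence is $\C$-linear for $J_0$; as $V/L$ is a quaternionic line bundle, $(V/L,J_0)$ is a complex bundle of rank two. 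In a $J_0$-trivialisation $D$ therefore has the form $D\psi=(\psi_{\bar z}+U(y)\psi)\,d\bar z$ with $U(y)$ a $2\times 2$ complex matrix, and crucially there is no complex-antilinear term.

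Next I separate variables. Looking for holomorphic sections with monodromy of the form $\psi(x,y)=e^{\mu x}\phi(y)$, with $\mu\in\C$, and using $\psi_{\bar z}=\tfrac12 e^{\mu x}(\mu\phi+i\phi')$, the equation $D\psi=0$ reduces to the first-order linear system
$$\phi'(y)=A(\mu,y)\,\phi(y),\qquad A(\mu,y)=i\mu\,\Id+2iU(y),$$
for the $\C^2$-valued $\phi$; here $A$ is periodic in $y$ and depends holomorphically (indeed affinely) on $\mu$, and the $x$-monodromy of $\psi$ is $a=e^{\mu\omega}$ for the relevant $x$-period $\omega$. Let $M(\mu)\in\GL(2,\C)$ be the monodromy of this ODE around one $y$-period; by the standard theory of linear ODEs it depends holomorphically on $\mu$. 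A section $\psi=e^{\mu x}\phi$ has $y$-monodromy $b$ precisely when $\phi(0)$ is an eigenvector of $M(\mu)$ with eigenvalue $b$ (a possible $x$-shift in the second lattice generator only rescales $b$ by a factor $e^{\mu(\cdot)}$ and is harmless). Hence every point of $Spec(V/L)$ arises in this way and satisfies
$$\det\!\big(M(\mu)-b\,\Id\big)=b^2-\Tr M(\mu)\,b+\det M(\mu)=0,$$
a polynomial equation of degree two in $b$ over the $\mu$-plane $\C$. Thus $Spec(V/L)$, and with it the normalisation $\hat\Sigma$ and the compactified spectral curve $\Sigma$ obtained by adding the two points at infinity, is a branched double cover of $\C$, the two sheets being the eigenvalues of $M(\mu)$ and the branch points lying where $\Tr M(\mu)^2=4\det M(\mu)$.

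I expect the main obstacle to be the bookkeeping of the second step: writing $D$ explicitly in the equivariant $J_0$-trivialisation and verifying that its zeroth-order term is genuinely a $2\times 2$ matrix acting $\C$-linearly (this is exactly where quaternionic linearity of $D$, together with the rank-one quaternionic structure of $V/L$, is used), and that the separated monodromy $M(\mu)$ is holomorphic in $\mu$. A secondary point is to control the compactification: one must check, using the asymptotic analysis of $\hat\Sigma$ in \cite{BoPP}, that adding the two points interchanged by $\rho$ does not disturb the double-cover structure, and that the involutions $\sigma$ of Lemma \ref{sigma11} and $\rho$ are compatible with it. Finally, since $a=e^{\mu\omega}$ relates the exponent $\mu$ to the multiplier $a\in\C_*$, the honest base of the covering is the $\mu$-plane $\C$ rather than $\C_*$, and this identification should be stated carefully.
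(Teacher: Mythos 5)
Your overall strategy is the same as the paper's (use translational invariance to separate variables, reduce $D\psi=0$ to a $y$-ODE with a spectral parameter, and read the double cover off the eigenvalues of its period monodromy), but the execution fails at the decisive structural step: the claimed normal form $D\psi=(\psi_{\bar z}+U(y)\psi)\,d\bar z$. Quaternionic linearity of $D$ does give $\C$-linearity with respect to $J_0$ (right multiplication by $\ii$), but $\C$-linearity says nothing about the principal part of the operator: $\del$ is just as $\C$-linear as $\dbar$. In the trivialization by the section $\psi$ of Proposition \ref{hpsi} (with $J\psi=\psi\ii$, $\dbar\psi=0$ and $Q\psi=\psi\,d\bar z\,q\jj$), writing $\tilde\psi=\psi u$ with $u=u_1+\jj u_2$, the equation $D\tilde\psi=0$ is the Dirac-type system
\[
\dvector{\dbar & \bar q\\ -q & \del}\dvector{u_1\\ u_2}=0,
\]
whose principal part is the diagonal matrix with entries $\dbar$ and $\del$, not $\dbar\cdot\Id$. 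No change of trivialization can repair this: a gauge transformation conjugates the principal symbol, and $\mathrm{diag}(\bar\xi,\xi)$ is not conjugate to the central symbol $\bar\xi\,\Id$. Conceptually, the part of $D$ you are trying to suppress is the Hopf field $Q$, which anticommutes with the \emph{left} complex structure $J$ coming from the conformal Gau\ss\ map; it is $J$, not $J_0$, that governs the $\dbar$-type splitting, and you have conflated the two.

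This error is not cosmetic. In the correct separated system the spectral parameter enters through the traceless matrix $\mathrm{diag}(-ia,ia)$, as in equation \eqref{dirac}, whereas in yours it enters through $i\mu\Id$. With $A(\mu,y)=i\mu\Id+2iU(y)$ the period monodromy factors as $M(\mu)=e^{i\mu\omega}M_0$ with $M_0$ independent of $\mu$, so its two eigenvalue branches either coincide identically or never meet: your ``spectral curve'' would be an unbranched, disconnected double cover (two disjoint copies of $\C$) or a double line, with no branch points at all. The branch points are precisely the geometric content of the theorem (and for constrained Willmore tori the spectral curve is connected by \cite{B}), and the functions $\Tr M(\mu)$, $\det M(\mu)$ you compute are not the actual monodromies of holomorphic sections. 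A second, independent gap: you only show that separated sections $e^{\mu x}\phi(y)$ produce points of $Spec(V/L)$; the assertion ``hence every point of $Spec(V/L)$ arises in this way'' is never proved. The paper closes exactly this gap using the generic complex one-dimensionality of the space of holomorphic sections with fixed monodromy from \cite{BLPP}: $x$-translation then acts on that line by a character $t\mapsto e^{at}$, which forces every such section to be of separated form.
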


\subsection{The Hopf Field of an Equivariant Immersion}
We consider $S^3 \subset \H$. 
Then we can think of $\H$ lying in $\H P^1$ by the map
$$\H \hookrightarrow \H P^1, x \mapsto [x,1].$$
Recall that we consider a map $f$ into $S^4 \cong \H P^1$ as the quaternionic line bundle given by $L = f^*\mathcal T.$
Thus the section $\varphi:= \pi_{V/L}(1,0)$ of $V/L$ is non vanishing and is thus a trivializing section. The holomorphic structure $D$ is determined by $D \varphi = 0.$ Let $J := \pi_{V/L} S$ denote the complex structure on $V/L$ under which $D$ splits into a $J-$commuting part $\bar \del = \tfrac{1}{2}(D - JDJ)$ and a $J-$anti-commuting part $Q = \tfrac{1}{2}(D + JDJ)$.  While $\bar \del$ defines a complex holomorphic structure on the complex line bundle $E$ given by the $\ii-$eigenspace of $J,$ the Hopf field $Q$ is tensorial. 
Thus  it is uniquely determined by the value of $Q(\varphi).$ 
Since $V/L$ is a quaternionic line bundle and $\varphi$ is a trivializing section, there exists a quaternionic valued function $N$ - the left normal vector of $f$ -  with $N^2 = -1$ with $J \varphi = \varphi N.$ We obtain therefore 
\begin{equation*}
\begin{split}
Q(\varphi) &= \tfrac{1}{2}(D + JDJ)\varphi \\
& = \tfrac{1}{2}J(\varphi d N)''\\
&= \tfrac{1}{4}J(\varphi d N + J \varphi *dN)\\
&=\tfrac{1}{4}\varphi(NdN - *dN),
\end{split}
\end{equation*} where $d$ is the derivative in $\H \cong \R^4.$ 
We want to switch to another trivializing section which behaves nicer with respect to $J$ in order to compute $Q.$ It turns out that under this trivialization of $V/L$ the Hopf field $Q$ is given by the conformal Hopf differential of $f$.
\begin{Lem}
Let $f$ be a conformally immersed torus in $S^3$ and $V/L$ the corresponding quotient bundle. 
Then we can choose a trivializing section with monodromy $\psi \in \Gamma(\widetilde{V/L}) $  with 
\begin{equation*}\label{psi}
\begin{split}
J \psi = \psi \ii \\
\bar \del \psi = 0,
\end{split}
\end{equation*}
where $\bar \del$ is the $J-$commuting part of the holomorphic structure $D$ on $V/L.$ This section $\psi$ is uniquely determined up to multiplication by a complex constant and has monodromy $\pm1$. 
\end{Lem}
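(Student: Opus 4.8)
The plan is to read the two displayed equations as the single assertion that $\psi$ is a holomorphic section of a complex line bundle. The excerpt already records that $J$ makes $V/L=E\oplus E\jj$ and that $\bar\del$, the $J$-commuting part of $D$, defines a complex holomorphic structure on the $\ii$-eigenbundle $E$. With this in hand, $J\psi=\psi\ii$ says exactly that $\psi\in\Gamma(E)$, while $\bar\del\psi=0$ says that $\psi$ is holomorphic for $(E,\bar\del)$. So the lemma becomes the statement that $(E,\bar\del)$ admits a nowhere-vanishing holomorphic section with constant monodromy, unique up to a complex scale and with monodromy in $\{\pm1\}$; a nowhere-vanishing section of $E$ automatically trivializes $V/L$ as a quaternionic line bundle.

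Next I would pin down the holomorphic type of $E$. The key input is the standard fact from quaternionic holomorphic geometry that the eigenbundle of the mean curvature sphere congruence is a spinor bundle, i.e. $E^{2}\cong K$ as holomorphic line bundles (so $\deg E=\tfrac12\deg K=g-1$). Since the underlying Riemann surface is a torus, $K$ is holomorphically trivial and $g=1$; hence $\deg E=0$ and $E$ is a holomorphic square root of the trivial bundle, i.e. a $2$-torsion point of $\operatorname{Jac}(T^2)$ — one of the four spin structures. Equivalently $E$ is a flat line bundle whose holonomy $h\colon\Gamma\to\C^{*}$ satisfies $h^{2}\equiv1$, so $h_\omega\in\{\pm1\}$ for every $\omega\in\Gamma$.

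Existence and the value of the monodromy then follow at once: pulling $E$ back to the universal cover and trivializing it by its flat structure yields a nowhere-vanishing holomorphic section $\psi$ of $\widetilde E\subset\widetilde{V/L}$ with $\omega^{*}\psi=\psi\,h_\omega$, $h_\omega\in\{\pm1\}$, satisfying $J\psi=\psi\ii$ and $\bar\del\psi=0$ by construction. For uniqueness, any two such sections differ by a nowhere-vanishing holomorphic function $u$ on $\C$ whose monodromy is constant and lies in $\{\pm1\}$ on each generator. Writing $u=e^{cz}$ up to a constant, the conditions $e^{c\omega_1},e^{c\omega_2}\in\{\pm1\}$ force $c\omega_1,c\omega_2\in i\pi\Z$; since $\omega_2/\omega_1\notin\R$ for a genuine lattice, this has only the solution $c=0$, so $u$ is constant and $\psi$ is determined up to a complex factor. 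Here the discreteness of the allowed monodromy is what rules out the otherwise available $e^{cz}$-family of flat trivializations and makes the normalization rigid.

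The main obstacle is the input $E^{2}\cong K$, together with the reality bookkeeping that guarantees the monodromy is honestly real and not merely unitary. Concretely I would verify, in the trivialization $\varphi=\pi_{V/L}(1,0)$, that right multiplication by $\jj$ is an antilinear isomorphism $E\to E\jj=\bar E$ commuting with $\bar\del$, so that $\psi\jj$ is again holomorphic with monodromy $\bar h_\omega$, and that the quaternionic triviality of $V/L$ forces $\abs{h_\omega}=1$; combined with $E^{2}\cong K\cong\mathcal O$ this upgrades $h$ to a genuine $\{\pm1\}$-representation. The identity $E^{2}\cong K$ is where the geometry of the conformal Gauss map enters, and is the step I expect to require the most care; everything after it is the elementary complex-analytic argument on the torus sketched above.
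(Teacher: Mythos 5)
Your proposal is correct and follows essentially the same route as the paper: both rest on the quaternionic-holomorphic-geometry input (cited to FLPP) that the $\ii$-eigenbundle $E$ of $J$ is a spin bundle (up to dualization, which is immaterial on a torus), whence on $T^2$ it is a $2$-torsion point of the Jacobian and admits a holomorphic trivialization with monodromy $\pm 1$. The only difference is one of detail: the paper phrases existence via triviality on a double covering of the torus and merely asserts uniqueness, while you spell out the degree count, the flat $\{\pm 1\}$-holonomy structure, and the Liouville argument ($u = e^{cz}$ forced to be constant by the lattice condition) that pins $\psi$ down up to a complex constant.
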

\begin{proof}
The $\ii-$eigenspace $E$ of $J$ on $V/L$ is the dual of a spin bundle by \cite{FLPP}. Since $\bar \del$ defines a complex holomorphic structure on $E$, it is  dual to a spin structure and thus there exist a solution to  the equation $\bar \del \psi = 0$ on the  double covering of the torus. Therefore we obtain a section $\psi$ with monodromy $\pm 1$ satisfying the conditions above. This condition fixes $\psi$ up to multiplication with a complex constant. 
\end{proof}
\begin{Rem}
Since $\varphi$ is a  trivializing section of $V/L$, every other section can be written as $\psi = \varphi \lambda,$ where $\lambda$ is quaternionic valued function.  Further we have  $J \varphi= \varphi N$. Then we have
$$J\psi = J (\varphi \lambda) = (J \varphi) \lambda  = \varphi N \lambda =\varphi \lambda \lambda^{-1} N \lambda = \psi \lambda^{-1} N \lambda.$$
Thus $J \psi = \psi \ii$ reduces to 
$$N = \lambda \ii \lambda^{-1}.$$ 
\end{Rem}

\begin{Lem}
Let $f$ be a conformally immersed  equivariant torus in $S^3$ and let $\psi$ be the  trivializing section of $V/L$ given in Lemma \ref{psi}. Then there exist a complex valued function $q$ satisfying
$$Q \psi = \psi d\bar z q \jj ,$$
where $z = x + iy$ is a holomorphic coordinate of $T^2 = \C/\Gamma.$
\end{Lem}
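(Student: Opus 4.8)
The plan is to obtain $Q\psi$ from the expression $Q\varphi = \tfrac14\varphi(N\,dN - *dN)$ already derived for the canonical trivialization $\varphi = \pi_{V/L}(1,0)$, by passing to the section $\psi$ of Lemma~\ref{psi}. Writing $\psi = \varphi\lambda$ with $N = \lambda\ii\lambda^{-1}$ (the relation recorded in the Remark following Lemma~\ref{psi}) and using that $Q$ is tensorial and quaternionic linear, I would first compute $Q\psi = (Q\varphi)\lambda = \tfrac14\,\psi\,\lambda^{-1}(N\,dN - *dN)\lambda$. Introducing the quaternionic one-form $\mu := \lambda^{-1}d\lambda$, a short computation gives $dN = \lambda(\mu\ii - \ii\mu)\lambda^{-1}$, hence $\lambda^{-1}(N\,dN)\lambda = \ii\mu\ii + \mu$ and $\lambda^{-1}(*dN)\lambda = *\mu\,\ii - \ii*\mu$, so that
\begin{equation*}
Q\psi = \tfrac14\,\psi\,\bigl(\ii\mu\ii + \mu - *\mu\,\ii + \ii*\mu\bigr).
\end{equation*}

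The next step is the algebraic heart of the argument. I would decompose $\mu = \mu_0 + \mu_1$ into its $\ii$-commuting part $\mu_0$ (with values in $\Span_\R\{1,\ii\} = \C$) and its $\ii$-anticommuting part $\mu_1$ (with values in $\Span_\R\{\jj,\kk\} = \C\jj$). Since $\ii\mu_0\ii = -\mu_0$ and $\ii\mu_1\ii = \mu_1$, the $\mu_0$-contributions cancel and the bracket collapses to $2(\mu_1 + \ii*\mu_1)$, which again takes values in $\C\jj$ because $\ii*\mu_1 \in \ii\,\C\jj = \C\kk \subset \C\jj$. Thus $Q\psi = \tfrac12\,\psi\,(\mu_1 + \ii*\mu_1)$ with a $\C\jj$-valued one-form coefficient, which already exhibits the $\jj$-factor claimed in the statement.

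It remains to see that this coefficient is of type $\bar K$, i.e.\ a $(0,1)$-form, so that it may be written as $d\bar z\,(q\jj)$ for a complex-valued function $q$. For this I would verify the intrinsic $\bar K$-condition $*(Q\psi) = (Q\psi)\ii$ (equivalently $*(Q\psi) = -J(Q\psi)$, using $J\psi = \psi\ii$ and $QJ = -JQ$): applying $*$ to $\mu_1 + \ii*\mu_1$ and using $** = -\mathrm{id}$ on one-forms together with $\mu_1\ii = -\ii\mu_1$ gives $*(\mu_1 + \ii*\mu_1) = *\mu_1 - \ii\mu_1 = (\mu_1 + \ii*\mu_1)\ii$, which is exactly the required type condition. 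Hence $\mu_1 + \ii*\mu_1 = d\bar z\,(q\jj)$ for a complex function $q$, and $Q\psi = \psi\,d\bar z\,q\jj$ after absorbing the factor $\tfrac12$ into $q$.

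I expect the main obstacle to be purely bookkeeping: keeping track of the side on which each quaternionic unit acts (all endomorphisms act by left multiplication in the trivialization $\psi$, while $J$ acts by right multiplication by $\ii$), and fixing conventions so that the intrinsic $\bar K$-condition is consistently identified with the formal symbol $d\bar z$ used to trivialize $\bar K$. Note that equivariance is not needed for the existence of a complex $q$; it enters only afterwards, through Remark~\ref{kurvenq} and Proposition~\ref{confhopfdiff}, to identify $q$ with the conformal Hopf differential and to conclude that it depends on the profile-curve parameter alone.
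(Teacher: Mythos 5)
Your proof is correct, but it takes a genuinely different route from the paper's, which is a two-line soft-algebra argument that never touches the explicit formula $Q\varphi=\tfrac14\varphi(N\,dN-*dN)$. The paper argues: $Q$ is tensorial and $\bar K(V/L)$-valued, so $Q\psi=\psi\,d\bar z\,\tilde q$ for some quaternionic function $\tilde q$; and $Q$ anti-commutes with $J$, so $JQ\psi=-Q(J\psi)=-Q(\psi\ii)=-(Q\psi)\ii$, giving $\ii\tilde q=-\tilde q\ii$, whence $\tilde q$ takes values in $\Span\{\jj,\kk\}$ and $\tilde q=q\jj$. You instead re-derive these two facts concretely by gauge-transforming the formula for $Q\varphi$: with $\psi=\varphi\lambda$, $N=\lambda\ii\lambda^{-1}$, $\mu=\lambda^{-1}d\lambda$, your identities $dN=\lambda(\mu\ii-\ii\mu)\lambda^{-1}$, the collapse of the bracket to $2(\mu_1+\ii*\mu_1)$ under the $\ii$-commuting/anticommuting splitting of $\mu$, and the type identity $*(\mu_1+\ii*\mu_1)=(\mu_1+\ii*\mu_1)\ii$ all check out; moreover the Hodge-star sign convention you flag when equating the type condition with the symbol $d\bar z$ is left equally implicit in the paper, so it is harmless. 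The trade-off: your route yields the explicit expression $Q\psi=\tfrac12\psi(\mu_1+\ii*\mu_1)$, tying $q$ to the Maurer--Cartan form of $\lambda$, which is essentially the computation the paper performs anyway in Proposition \ref{hpsi} (there with a geometric choice of $\lambda$) in order to identify $q$ with the conformal Hopf differential; the cost is length, plus a slight redundancy, since the abstract facts you invoke at the very end to settle the type question ($QJ=-JQ$, tensoriality, $J\psi=\psi\ii$) would by themselves already prove the whole lemma the paper's way. Your closing remark that equivariance is nowhere needed is also consistent with the paper, whose proof of this lemma likewise uses only $J\psi=\psi\ii$.
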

\begin{proof}
The Hopf differential $Q$ is tensorial and satisfies $*Q = - JQ,$ since it takes values in $\bar K (V/L).$ Thus  there is a quaternionic valued function $\tilde q$ with 
$$Q(\psi) = \psi d\bar z \tilde q. $$
Further $Q$ is 
$J-$anti-commuting and 
\begin{equation*}
JQ(\psi) = - Q(J \psi) = - Q(\psi \ii) = - Q (\psi) \ii .
\end{equation*}
This is equivalent to 
\begin{equation*}
\quad J (\psi d \bar z \tilde q) =  \psi (\ii d \bar z \tilde q) = - \psi (d \bar z \tilde q \ii),
\end{equation*}

therefore
\begin{equation*}
\ii \tilde  q = - \tilde q \ii.
\end{equation*}
Since the quaternionic function $\tilde q$  anti-commutes with $\ii$, it takes values in span$\{\jj, \kk\}$ and  there is a complex function $q$ with $\tilde q = q \jj.$  
\end{proof}
\begin{Pro}\label{hpsi}
For an equivariant conformal immersion $f: T^2 \rightarrow S^3$  the function $q$  defined in the previous lemma  is the conformal Hopf differential of the torus.
In particular  $q$ depends only on $y$ and is periodic. 
\end{Pro}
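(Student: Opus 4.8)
The proposition makes one substantial claim --- that the complex function $q$ determined by $Q\psi=\psi\,d\bar z\,q\jj$ is the conformal Hopf differential --- together with the consequence that, in the equivariant case, $q$ depends only on $y$ and is periodic. The consequence is not independent: once $q$ is identified with $\II(\tfrac{\del}{\del z},\tfrac{\del}{\del z})/|df|$, Remark \ref{kurvenq} gives the dependence on $y$ and the periodicity directly. So the plan is to pin down the value of $q$ and match it to the Definition; the identification itself is a general feature of the quaternionic picture, and equivariance enters only through the concluding reference to Remark \ref{kurvenq} (and Proposition \ref{confhopfdiff}).

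To compute $q$ I would carry the expression $Q\varphi=\tfrac14\varphi(N\,dN-*dN)$ into the distinguished trivialization $\psi=\varphi\lambda$ characterized by $J\psi=\psi\ii$. Since $Q$ is tensorial, $Q\psi=(Q\varphi)\lambda=\tfrac14\,\psi\,\lambda^{-1}(N\,dN-*dN)\lambda$. I would simplify the bracket using $\lambda^{-1}N\lambda=\ii$ (from $N=\lambda\ii\lambda^{-1}$) and $N^2=-1$, which makes $dN$ anticommute with $N$; consequently $A:=\lambda^{-1}(dN)\lambda$ anticommutes with $\ii$ and takes values in $\Span\{\jj,\kk\}$. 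As the previous lemma already guarantees that $Q\psi=\psi\,d\bar z\,q\jj$, it then suffices to expand $A$ and $*A$ in the conformal coordinate and read off the complex coefficient of $d\bar z\,\jj$.

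The remaining step is to recognize this coefficient geometrically. Under $S^3\subset\H\hookrightarrow\H P^1$ the left normal $N$ is the quaternionic Gauss map of $f$, so $dN$ is controlled by the shape operator and its trace-free part is the second fundamental form. Expanding $N\,dN-*dN$ in $z=x+iy$ and projecting onto the right factor $\jj$ reproduces, up to the conformal factor $|df|$, the value $\II(\tfrac{\del}{\del z},\tfrac{\del}{\del z})$, so that $q=\II(\tfrac{\del}{\del z},\tfrac{\del}{\del z})/|df|$ is precisely the conformal Hopf differential. For the concluding clause, Proposition \ref{confhopfdiff} identifies this with $\tfrac14(\kappa_{m,n}+i\Omega)$ and Remark \ref{kurvenq} yields that it depends only on $y$ and is periodic.

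The main obstacle is entirely one of conventions and normalization. One must keep the scalar complex structure of $T^2$ in $z=x+iy$ consistent with the quaternion $\ii$ --- they agree only up to a sign, which is fixed by the type condition $*Q=-JQ$ --- track the Hodge-star signs on $dz$ and $d\bar z$, and check that the constant $\tfrac14$ and the factor $1/\sqrt h$ come out exactly rather than as some other multiple. These normalizations are precisely what make the two a priori different definitions of $q$ coincide on the nose, so the constant $\tfrac14$ and the right placement of $\jj$ must be handled with care; the underlying geometry, once the dictionary between $dN$ and $\II$ is in place, is routine.
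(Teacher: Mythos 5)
Your opening reductions are sound and coincide with the paper's: tensoriality gives $4Q\psi=\psi\,\lambda^{-1}(N\,dN-*dN)\lambda$, and $\lambda^{-1}N\lambda=\ii$ forces the bracket into $\Span\{\jj,\kk\}$. But the heart of the proposition --- identifying the resulting $d\bar z\,\jj$-coefficient with the conformal Hopf differential --- is precisely the step you do not carry out, and the claim you invoke to declare it routine is incorrect in detail. For a surface in $S^3\subset\H$ the left normal is not the Gauss map of $f$ but its right translation, $N=N_{orm}\bar f$, so $dN=(dN_{orm})\bar f+N_{orm}\,d\bar f$: only the first summand is governed by the shape operator, while the second is a translation term that contributes essentially to $Q$. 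In the paper's computation this extra term is what turns $\tau$ into $\tau+1$ in $\tfrac{\del N}{\del y}$, and it combines with the purely equivariant term $\tfrac{\del N}{\del x}=l_1(\ii N-N\ii)$ to produce the imaginary part $\ii\,\tfrac{2mn}{\sqrt h}=\ii\,\Omega$ of $q$; a dictionary ``trace-free part of $dN$ equals $\II$'' that ignores it would, for instance, miss the constant $2\ii$ in the Hopf-torus case. Accordingly, the paper does not argue in your generality at all: it chooses $\lambda$ explicitly as the rotation carrying the right-translated Fr\'enet frame $(N_{orm}\bar f,\,T,\,B)$ of the profile curve (Lemma \ref{frenet}) to $(\ii,\jj,\kk)$, computes $N\,dN-*dN$ from the Fr\'enet equations together with the group action, evaluates the quaternionic products $\langle N\ii N+\ii,T\rangle$ and $\langle N\ii N+\ii,B\rangle$, and only then recognizes the answer $\tfrac14\bigl(\kappa_{m,n}+\ii\,\tfrac{2mn}{\sqrt h}\bigr)$ as the conformal Hopf differential via Proposition \ref{confhopfdiff}. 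Equivariance is used throughout this computation, not only at the end through Remark \ref{kurvenq}.

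There is a second gap: you treat $J\psi=\psi\ii$ as characterizing the trivialization, but sections with this property are unique only up to multiplication by complex-valued functions $\mu$, under which $q$ transforms as $q\mapsto q\,\bar\mu\mu^{-1}$; so the phase of $q$, as a function, is pinned down only by the additional normalization $\bar\del\psi=0$ of Lemma \ref{psi}. Reading off the coefficient in an arbitrary $J$-adapted trivialization therefore does not determine the $q$ of the previous lemma. The paper closes exactly this point in its final paragraph, showing that its geometric $\lambda$ can be corrected by a real-valued factor $r=e^{\int_0^y l_1\langle\lambda^{-1}\ii\lambda,\ii\rangle\,ds}$ so that $\varphi\lambda r$ is $\bar\del$-holomorphic while $q$ is left unchanged (real factors act trivially on $q$). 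Your proposal never addresses this normalization, so even granting your dictionary between $dN$ and $\II$, the computed coefficient would not be known to agree with the function the proposition is about.
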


\begin{proof}
The previous Lemma states
$$Q(\psi) = \psi d\bar z q \jj,$$
for a complex valued function $q.$ We want to compute the  function $q$ explicitly.  
Since $Q$ is tensorial and there exist a quaternionic valued function $\lambda $ with $\psi = \varphi \lambda$.   We get 
\begin{equation}\label{Q}
4Q(\psi) =4 Q(\varphi) \lambda = \varphi  (NdN - *dN) \lambda = \psi \lambda^{-1} (NdN - *dN) \lambda.
\end{equation}
\\
We want to make a special choice of $\lambda$ in order to relate $q$ to the conformal Hopf differential of the torus. Then we show that $\psi = \varphi \lambda$ satisfies the conditions of Lemma \ref{psi}. \\

\noindent
Let $f$ be a conformally parametrized $(m,n)-$torus in $S^3 \cong SU(2)$ given by
$$f(x,y) = e^{ \ii l_1x}\gamma(y) e^{ \ii l_2x},$$
see section \ref{Seifert fibrations} and let $(\tilde T, N_{orm}, \tilde B)$ be the Fr\'enet frame of its profile curve, see Lemma \ref{frenet}. We denote the corresponding frame of the torus also by $( \tilde T, N_{orm}, \tilde B).$
By right translation in the Lie group $S^3 \cong SU(2)$ we obtain point wise a orthonormal basis  $(  \tilde T \bar f, N_{orm} \bar f, \tilde B \bar f)$ of $su(2) \cong \text{Im}\H $. We choose $\lambda$ to be the element of $SO(3)$ rotating  this basis to the canonical basis $(\ii, \jj, \kk)$. Let $T :=\tilde T \bar f$ and  $B := \tilde B \bar f.$
We have 
$$\ii =  \lambda^{-1} N_{orm} \bar f \lambda , \quad  \jj =  \lambda^{-1} T\lambda,  \quad \kk =  \lambda^{-1} B\lambda .$$ 

\noindent
Since $f$ is conformal, its left normal vector  $N$ is given by 
 $$*df = Ndf$$
It coincides with the function defined by $J \varphi = \varphi N$ because also $D\pi_{V/L}(0,1) = 0$ and 
$$ \pi_{V/L}(0,1) + \pi_{V/L}(1,0)f= 0_{V/L}.$$

\noindent
The normal vector of the surface is 
$$N_{orm} = \frac{1}{h}\frac{\del f}{\del x} \bar f  \frac{\del f}{\del y}=  -\frac{1}{h} \frac{\del f}{\del x}   \frac{\del \bar f}{\del y} f.$$
The function $N$ is thus the right translation of the normal vector of the surface to $su(2).$ Therefore the choice of $\lambda$ satisfies the condition $J \psi = J \varphi \lambda = \varphi N\lambda = \psi \ii$ and we have $Q (\psi) = \psi d\bar z q \jj,$ for a complex function $q.$\\

Now we compute $\lambda^{-1} (NdN - *dN) \lambda.$
The Fr\'enet equations for the profile curve in $S^3$ gives:  
\begin{equation*}
\begin{split}
N'_{orm} &= - \sqrt{h} \kappa_{S^3}  \tilde T + \sqrt{h}\tau_{S^3} \tilde B\\
\Rightarrow \frac{\del N}{\del y} &= N'_{orm} \bar f - N_{orm} \bar f \frac{\del f}{\del y} \bar f \\
&= - \sqrt{h} \kappa  T - \sqrt{h}\tau B - \sqrt{h} N T \\ &= - \sqrt{h} \kappa  T - \sqrt{h}(\tau+1) B \\
\frac{\del N}{\del x} &= l_1(\ii N - N\ii).
\end{split} 
\end{equation*}
Inserting into equation (\ref{Q}) we get 
\begin{equation}\label{q}
Q \psi 
= l_1 \lambda^{-1}(N \ii N + \ii)\lambda  +  (\sqrt{h} \kappa   +  \sqrt{h}(\tau+1) \ii )\jj dz.
\end{equation}
 The term $N\ii N + \ii$ is perpendicular to $N$ and purely imaginary. And because $(N,  T,  B)$ is a orthonormal basis of Im$\H$ we have
$$ N\ii N + \ii = <N\ii N + \ii, T>T + <N\ii N + \ii, B>B.$$
We compute both summands separately. The multiplication by a unit length quaternion does not change the metric on $\H \cong \R^4.$ Thus
\begin{equation*}
\begin{split}
<N\ii N + \ii, T> &= <N\ii N, NTN> + < N\ii N, T>\\
&=  2<N, \ii B> = -\frac{2l_2}{\sqrt{h}} <N,  \ii f \ii \bar f> .
\end{split}
\end{equation*}
And with $N = N_{orm} \bar f$ we obtain
$$l_1<N\ii N + \ii, T>= - \frac{2l_1 l_2}{\sqrt{h}} <N_{orm}, \ii f \ii> .$$
Now we turn to the second term:
\begin{equation*}
\begin{split}
<N\ii N + \ii, B> &= 2<\ii, B> = - 2l_1 \frac{1}{\sqrt{h}} -2l_2 <\ii ,\frac{f \ii \bar f}{\sqrt{h}} > \\
&= - 2l_1 \frac{1}{\sqrt{h}}  + \frac{2 l_2}{\sqrt{h}} Re(\ii f \ii \bar f) .
\end{split}
\end{equation*}
Note that for equivariant tori $Re( \ii f  \ii \bar f) = Re( \ii \gamma  \ii \bar \gamma)$. Moreover let $\gamma = \gamma_1 + \jj \gamma_2$ with complex functions $\gamma_1$ and $\gamma_2.$ Then we have $h = m^2|\gamma_1|^2 + n^2|\gamma_2|^2$ and of course $|\gamma_1|^2 + |\gamma_2|^2 = 1.$ Thus we get
\begin{equation*}
\begin{split}
<N\ii N + \ii, B> &= - \frac{2 }{\sqrt{h}}(m|\gamma_1|^2 - n|\gamma_2|^2)\\
\Rightarrow l_1 <N\ii N + \ii, B>  &= - \frac{1 }{h}((m-n)m|\gamma_1|^2 - (m-n)n|\gamma_2|^2) \\
&= \tfrac{mn}{\sqrt{h}} -  \sqrt{h}.
\end{split}
\end{equation*}

Put these results into equation (\ref{q})   we obtain
$$q =  \frac{1}{4}\left(\sqrt{h} \kappa_{S^3} - \frac{2l_1l_2}{\sqrt{h}} <N_{orm},  \ii\gamma  \ii>   + \ii\frac{2mn}{\sqrt{h}} \right) = \frac{1}{4}\left(\kappa_{m,n} + \ii \frac{2mn}{\sqrt{h}}\right),$$
which is the conformal Hopf differential of $f$.

It remains to show that we can adjust the section $\psi $ in order to get $\bar \del \tilde \psi = 0$ preserving the property $J \tilde \psi = \tilde \psi \ii$ and the function $q.$ We can still multiply by a real valued function. 
The condition $\bar \del \tilde  \psi = 0$ holds if and only if  $D (\tilde \psi)$ anti-commutes with $\ii.$ We have
$$D(\psi) = D (\varphi \lambda) = (\varphi d \lambda)'' =\ \psi \lambda^{-1} d \lambda  +  J \psi \lambda^{-1} *d \lambda =  \psi ( \lambda^{-1} d \lambda  +    \ii \lambda^{-1} *d \lambda). $$
Because of the frame equations  of the profile curve we have
$$ B' (0,y)  =  (\sqrt{h} \tau + \sqrt{h})N(0,y).$$ 
Substituting this equation into the derivative of the equation $\lambda^{-1} B \lambda = \kk,$ which is valid by the definition of $\lambda, $ gives
$$\lambda^{-1}(y) \lambda'(y)  = \jj v(y),$$ with a complex valued function $v.$
Since $\lambda(x,y) = e^{ \ii l_1 x} \lambda(y) ,$ we get
$$\lambda^{-1} d \lambda  +    \ii \lambda^{-1} *d \lambda = d\bar z (l_1 \lambda^{-1}  \ii \lambda + \jj v(y)). $$
Thus for $r = e^{ \int_0^y  l_1<\lambda^{-1}  \ii \lambda, \ii> ds}$ and $\tilde \lambda  = \lambda r$ the section $\tilde \psi =  \varphi \tilde \lambda$ is holomorphic with respect to $\bar \del.$ 
\end{proof}

\subsection{The Spectral Parameter}
The next step towards the spectral curve of an equivariant torus is to exploit the translational invariance of the holomorphic structure $D = \bar\del + Q.$
Let $f: T^2 = \C /\Gamma \rightarrow S^3$ be a conformal immersion. Choose $\omega_1 \in \R$ and $\omega_2$ be generators of the lattice $\Gamma$ and let $V/L$ be the quotient bundle associated to $f.$ Recall that the spectral curve is defined to be the normalization of the analytic variety given by 
$$ \{ (h_1 , h_2) \subset \C_* \times \C_* | \exists \varphi \in H^0(\widetilde{V/L}) \text{ with monodromy  } h_{\omega_1} = h_1 , h_{\omega_2} = h_2 \}.$$
In order to compute the spectral curve, it is thus sufficient to know the generic points. \\

Let $\psi$ be the trivializing section of $V/L$ with monodromy of Lemma \ref{psi} and Proposition \ref{hpsi}. To every holomorphic section $\tilde \psi$ of $V/L$ with monodromy there is a quaternionic valued function $u $ with $\tilde \psi = \psi u$ and the equation $D (\psi u) = 0$ reduces to
\begin{equation}\label{u}
  \bar \del u  + (d\bar z  q \jj) u  = 0,
\end{equation}
where $\bar \del$ is the ordinary holomorphic structure for functions.  By identifying $\H$ with $\C \oplus  \jj \C $ we get for $u = u_1 +  \jj u_2 ,$ the matrix notation of equation (\ref{u}).

$$D u := \dvector{\bar \del & \bar  q\\   - q &\del} \dvector{u_1 \\  u_2}  = 0.$$

Since the potential $q$ depends only on $y$, the differential operator $D$ is translational invariant. Thus if $u(x,y) $ satisfies $D u = 0,$
then  also $D u(x+x_0, y) = 0,$ for an arbitrary constant $x_0 \in \R$. Obviously both solutions have the same monodromy. By   \cite{BLPP} 
the space of holomorphic sections with 
monodromies $(h_{\omega_1} , h_{\omega_2})$  is generically complex $1$ dimensional. Thus, at a generic point of the spectral curve, we have that for all $x_0 \in \R$
 $$u(x+x_0, y) =  u(x,y) A_{x_0},$$
 where $A_{x_0}$ is a complex constant.   The so defined function
 $A: \R \rightarrow \C$ is given by $A_x = e^{a x}$ for $a \in \C$ with  $e^{a \omega_1} = h_1.$

Thus we can  use the ansatz $u_i = e^{a x} \tilde u_i(y),$ where $a \in \C$ is a constant representing the monodromy in $x-$direction. This yields
\begin{equation}\label{dirac}
Du = \left( \frac{\del}{\del y} + \dvector{ - i a &  2 i \bar q\\   2 i q &  ia }\right) \dvector{\tilde u_2 \\  \tilde u_1} = 0. 
\end{equation}
\noindent
We have shown the following Lemma.
\begin{Lem}
Let $f: T^2 \rightarrow S^3$ be an equivariant and conformal immersion and $V/L$ the associated quotient bundle. The spectral curve of $f$ is determined by the kernel of the  family of ordinary linear differential operators 
$$D_a:=  \frac{\del}{\del y} + \dvector{ - i a &   2 i \bar q\\  2 i q &  i a }, \ \ a \in \C.$$
\end{Lem}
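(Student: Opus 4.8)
The plan is to read the spectral curve off the reduced family of ordinary differential operators $D_a$ of the preceding Lemma and to identify it with the eigenvalue (Floquet) curve of the associated period monodromy, which is manifestly a two-sheeted cover of the $a$-plane. The geometric input that makes this work is exactly the translational invariance already exploited: since $q=q(y)$ is periodic, the partial differential equation $D\psi=0$ reduces to the $a$-family of ordinary operators, and the \emph{two}-dimensionality of each solution space is the source of the factor $2$.

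First I would fix the structure of the ODE. For each $a\in\C$ the equation $D_a\tilde u=0$ is a first order linear system for a $\C^2$-valued function $\tilde u(y)$, so its space of solutions is two-dimensional over $\C$, and these solutions depend holomorphically on the parameter $a$ by standard ODE theory. Because the coefficient matrix $\dvector{-ia & 2i\bar q\\ 2iq & ia}$ is trace-free, Liouville's formula shows that the fundamental solution has constant determinant; normalising the fundamental matrix $\Phi_a(y)$ by $\Phi_a(0)=\Id$ and letting $l$ denote the period of $q$, the period map $M(a):=\Phi_a(l)$ is a holomorphic family in $\SL(2,\C)$.

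Next I would match the abstract monodromies defining $Spec(V/L)$ with the Floquet data $(a,\mu)$. A holomorphic section of $\widetilde{V/L}$ with well-defined complex monodromy corresponds, via $\psi=\psi_0 u$ and the ansatz $u=e^{ax}\tilde u(y)$, to a Floquet eigenfunction, i.e. to an eigenvector $\tilde u$ of $M(a)$ with eigenvalue $\mu$. Translation by $\omega_1\in\R$ is a pure shift in $x$ and multiplies the section by $e^{a\omega_1}$ (up to the fixed sign $\pm1$ carried by $\psi_0$), so $h_{\omega_1}$ is determined by $a$; translation by $\omega_2$ shifts $x$ by $\Re\,\omega_2$ and multiplies by $e^{a\Re\,\omega_2}$ times a power of the Floquet multiplier $\mu$, so $h_{\omega_2}$ is determined by $(a,\mu)$. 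Hence $(a,\mu)\mapsto(h_{\omega_1},h_{\omega_2})$ identifies $Spec(V/L)$, at its generic points, with the eigenvalue curve
$$\Sigma':=\{(a,\mu)\in\C\times\C_*\ :\ \mu^2-\Tr M(a)\,\mu+1=0\}.$$
For generic $a$ the matrix $M(a)\in\SL(2,\C)$ has two distinct eigenvalues $\mu_\pm(a)$ with $\mu_+\mu_-=1$ — consistent with the generically one-dimensional space of holomorphic sections per monodromy of \cite{BLPP}, one for each sheet — so the projection $\Sigma'\to\C,\ (a,\mu)\mapsto a$ is a branched two-sheeted covering, branched exactly where $\Tr M(a)=\pm2$, and passing to the normalisation exhibits the spectral curve as a double covering of $\C$.

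The step I expect to be the main obstacle is the identification in the third paragraph: showing cleanly that the normalisation of $Spec(V/L)$, defined through quaternionic monodromies in $\C_*\times\C_*$, really coincides with the eigenvalue curve $\Sigma'$ over the $a$-plane and not with some further quotient of it. The exponential relation $h_{\omega_1}=e^{a\omega_1}$ renders $a$ only locally single-valued on $Spec(V/L)$, so one must verify that the Floquet description assigns exactly two points to a generic $a$, control the exceptional points where $\dim\ker D_a$ jumps and where $M(a)$ fails to be semisimple, and check compatibility with the involutions $\rho$ and $\sigma$ of Lemma \ref{sigma11} so that the covering and its branch behaviour are the intrinsic ones.
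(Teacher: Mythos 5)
Your overall strategy --- reduce to the ODE family, pass to the $\SL(2,\C)$ period monodromy $M(a)$ via Liouville's formula, and identify $Spec(V/L)$ with the Floquet eigenvalue curve --- is the same route the paper takes; indeed your second and third paragraphs essentially reproduce its later proof of Theorem \ref{hyperelliptic}. But as a proof of \emph{this} Lemma there is a genuine gap, and it is exactly the step you flag at the end as ``the main obstacle'': nothing in your argument shows that a generic holomorphic section with monodromy actually has the separated form $u=e^{ax}\tilde u(y)$. Treating this as an ``ansatz'' (and referring to the reduction to $D_a$ as ``already exploited'' in a preceding lemma --- the reduction \emph{is} the content of the present Lemma, so that reading is circular) proves only one inclusion: Floquet eigenfunctions of $M(a)$ yield points of $Spec(V/L)$. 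The Lemma asserts the converse as well, namely that the kernels of the $D_a$ capture, generically, \emph{all} of $Spec(V/L)$. A priori a solution of $Du=0$ with lattice monodromy need not separate variables --- where the translation action has a Jordan block one gets solutions of the form $(x\,\tilde u_1(y)+\tilde u_2(y))e^{ax}$ --- and your proposal does not exclude that parts of $Spec(V/L)$ consist of such non-separated sections.

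The missing idea, which is the engine of the paper's proof, is to use translation by an \emph{arbitrary} real $x_0$ (not only lattice shifts) together with the generic one-dimensionality from \cite{BLPP}, which in your write-up appears only as a consistency check (``one for each sheet''). Since $q=q(y)$, translation by $x_0$ preserves both the equation $Du=0$ and the monodromy representation; since the space of holomorphic sections with fixed monodromy $(h_{\omega_1},h_{\omega_2})$ is generically complex one-dimensional, translation must act on it by a scalar, $u(x+x_0,y)=u(x,y)A_{x_0}$ with $A_{x_0}\in\C_*$. The map $x_0\mapsto A_{x_0}$ is a continuous homomorphism $\R\to\C_*$, hence $A_{x_0}=e^{ax_0}$ for some $a\in\C$ with $e^{a\omega_1}=h_{\omega_1}$ up to the sign carried by the trivializing section. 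Setting $\tilde u(y):=u(0,y)$ then \emph{forces} $u(x,y)=e^{ax}\tilde u(y)$: the separated form is a conclusion, not an ansatz, and every generic point of $Spec(V/L)$ therefore comes from $\ker D_a$. With that step supplied, your Floquet analysis (exceptional points handled by normalization, compatibility with $\rho$ and $\sigma$) goes through as you describe.
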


\begin{Lem}
For generic $a \in \C$
the matrix $H(a)$ is diagonalizable and has distinct eigenvalues.
\end{Lem}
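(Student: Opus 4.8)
The plan is to read $H(a)$ as the monodromy (period map) of the reduced system $D_a\tilde u=0$ over one period $l$ of the potential $q$: writing the system in the form $\Phi'=A_a(y)\Phi$ with $\Phi(0)=\Id$, where $A_a(y)$ is the negative of the coefficient matrix appearing in $D_a$, one sets $H(a)=\Phi(l)$. I would first record two structural facts. Since the entries of $A_a(y)$ are affine in $a$ with $l$-periodic coefficients, the solution of the initial value problem depends holomorphically on the parameter, so $a\mapsto H(a)$ is an entire $\GL(2,\C)$-valued function. Since $D_a$ has trace-free coefficient matrix, i.e. $\Tr A_a\equiv 0$, Liouville's formula gives $\det H(a)=\exp\int_0^l\Tr A_a\,dy=1$, so in fact $H(a)\in\SL(2,\C)$ for every $a$.

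The next step is a purely algebraic reduction. For $M\in\SL(2,\C)$ the two eigenvalues are $\mu$ and $\mu^{-1}$ with $\mu+\mu^{-1}=\Tr M$; they are distinct precisely when $\Tr M\neq\pm2$, and distinct eigenvalues force $M$ to be diagonalizable. Hence the claim is equivalent to showing that the entire function $\Delta(a):=\Tr H(a)$ is neither identically $2$ nor identically $-2$. Granting this, $\Delta-2$ and $\Delta+2$ are nonzero entire functions, so $\{\,a:\Delta(a)=\pm2\,\}$ is a discrete subset of $\C$; for every $a$ in its complement $H(a)$ has two distinct eigenvalues and is diagonalizable, which is exactly the generic statement.

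It therefore remains to prove that $\Delta$ is non-constant, and here I would use the behavior as $a\to\infty$. Splitting $A_a=ia\Lambda+B(y)$ with $\Lambda=\operatorname{diag}(1,-1)$ and $B$ the bounded $l$-periodic off-diagonal part coming from $q$, the diagonal term dominates for large $|a|$, so the monodromy is governed by $e^{ial\Lambda}=\operatorname{diag}(e^{ial},e^{-ial})$ and one expects $\Delta(a)=2\cos(al)+\text{(lower order)}$, with $2\cos(al)\to\infty$ as $\Im a\to\pm\infty$. Concretely, taking $a=it$ with $t\to+\infty$ one has $ia=-t$, so the second diagonal entry of $A_{it}$ equals $+t$ and the corresponding Floquet solution grows like $e^{ty}$; this forces $\Delta(it)\to\infty$ and hence $\Delta\not\equiv\pm2$. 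Finally I would assemble the three steps into the conclusion as above.

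I expect the asymptotic estimate to be the main obstacle, and it requires genuine care rather than a soft argument. A naive diagonalizing gauge $\Phi=e^{ia\Lambda y}\Psi$ turns $B$ into off-diagonal terms growing like $e^{\pm2ty}$, so the perturbation is \emph{not} small in the usual Dyson-series sense; instead one must isolate the single dominant (growing) mode via the Volterra integral equation for that solution together with a Gronwall estimate, establishing $H_{22}(it)=e^{tl}\bigl(1+O(1/t)\bigr)$ while $H_{11}(it)$ stays bounded, so that the \emph{trace}, and not merely the norm, blows up. I would stress that controlling the trace rather than the norm is essential: a norm bound alone cannot exclude $\Delta\equiv2$, since shears in $\SL(2,\C)$ have arbitrarily large norm yet trace $2$. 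Since $D_a$ is a periodic Zakharov--Shabat / AKNS operator, these transfer-matrix trace asymptotics are classical and the estimate may alternatively be invoked from the standard Floquet theory of such operators.
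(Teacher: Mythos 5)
Your proposal is correct, but it takes a genuinely different route from the paper. The paper argues through the quaternionic-holomorphic machinery: since $\det H(a)=1$, a failure of the lemma means $H(a)$ has the single eigenvalue $1$ or $-1$; if this happened on an open set of parameters $a$, holomorphicity of the eigenvalue projection $p_1:Spec(V/L)\to\C_*$ would force $p_1$ to be constant equal to $\pm1$ on the whole spectral variety, and evaluating at $a\in \tfrac{2\pi i}{\omega_1}\Z$ would then produce infinitely many holomorphic sections of $\widetilde{V/L}$ with one fixed monodromy $(1,\pm1)$, contradicting the finiteness results of section 3.1 of \cite{BLPP} (the quaternionic Pl\"ucker formula). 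Your argument instead stays entirely inside classical ODE/Floquet theory: entire dependence on $a$, $H(a)\in\SL(2,\C)$ by Liouville, the reduction of the lemma to $\Delta:=\Tr H\not\equiv\pm2$, and the transfer-matrix asymptotics $\Delta(it)\sim e^{tl}$ as $t\to+\infty$. What your route buys is self-containedness (no appeal to the spectral variety or to \cite{BLPP}) and a quantitative statement about where degeneracies can sit (a discrete set, located by an explicit entire function); what it costs is the large-parameter estimate, which you correctly identify as the crux, which is \emph{not} a soft perturbation argument, and for which your Volterra-plus-Gronwall scheme on the dominant column (or the citation of classical Zakharov--Shabat/AKNS transfer-matrix asymptotics) is the right tool. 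The paper's proof needs no asymptotic analysis at all, but leans on much heavier global theory.

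One small correction to your final paragraph: the claim that $H_{11}(it)$ stays bounded is false in general. Already for constant $q\neq0$ one computes $H=\exp(lA_{it})$ explicitly and finds $H_{11}(it)\sim-|q|^2e^{tl}/t^2$, which is exponentially large. What your Volterra estimate actually yields is
\begin{equation*}
H_{22}(it)=e^{tl}\bigl(1+O(1/t)\bigr),\qquad H_{11}(it)=O\bigl(e^{tl}/t^2\bigr)=o\bigl(H_{22}(it)\bigr),
\end{equation*}
and this weaker statement is all you need: the trace is still $e^{tl}\bigl(1+O(1/t)\bigr)\to\infty$, so $\Delta\not\equiv\pm2$ and the genericity conclusion follows as you state.
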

\begin{proof}
If $H(a)$ is not diagonalizable,  then $H(a)$ still is trigonalizable and has  only one eigenvalue $1$  or $-1$ with algebraic multiplicity $2.$  
Since $Spec(V/L)$ is an analytic variety the map 
$$p_1: Spec(V/L) \rightarrow \C_*,  (b,a) \mapsto b$$
is holomorphic. Therefore, if  there exist a open set $U\subset \C $ such that $H(a)$ is not diagonalizable for $a \in U$ then the map $p_1$ must be constant on $Spec(V/L)$. Thus all solutions to $D_a$ have either monodromy $1$ or $-1.$ Now let $a \in i \R$ and $u_a(y)$ be an eigensolution of $D_a.$ Further, let $\psi$ be the trivializing section with monodromy introduced in Proposition \ref{hpsi}. Then $\psi e^{ax} u_a(y)$ is  a holomorphic section with monodromy $(e^{a\omega_1}, 1)$ or $(e^{a\omega_1}, -1)$   in $V/L.$  In either case for $a \in \tfrac{\omega_1 i }{2\pi}\Z$ we obtain infinitely many holomorphic sections with monodromy $(1, -1)$ or $(1,1)$ which is not possible by section 3.1 of  \cite{BLPP}.
\end{proof}

\noindent
Now we can proof Theorem \ref{hyperelliptic}.
\begin{proof}
The spectral curve is determined by all possible monodromies of solutions to equation (\ref{dirac}).  Since (\ref{dirac}) is an ordinary linear differential equation, we have for arbitrary $a \in \C$ two linear independent solutions.  
 Let $\Phi(a)$  be the fundamental solution matrix to (\ref{dirac}). 
Then we have
$$\Phi(y + \omega) = H(a) \Phi(y),$$ where $\omega \in \R$ is the period of the potential $q,$ and  $H(a)$ is a $SL(2, \C)$ matrix independent of $y.$ 
The solutions with monodromy of (\ref{dirac}) are exactly the eigensolutions of $H(a)$.
 Therefore the spectral curve is the normalization of the variety
$$Spec(V/L) = \{(b, a) | a \in \C \text{ and }  b \text{ eigenvalue of } H(a)\}.$$

The normalization of $Spec(V/L)$ is a double covering of $\C$, because to a generic spectral parameter $a \in \C$ we have  two distinct eigenvalues $H(a)$ and thus two different points in $Spec(V/L).$ 
If the spectral curve $\hat \Sigma$ is of finite genus then it can be compactified  to $\Sigma$ by adding two points at infinity. In this case the ends of the spectral curve corresponds to the points over $a = \infty$ and we obtain that $\Sigma$ is a hyperelliptic curve, which is not branched over $a = \infty$. 
\end{proof}

\begin{Rem}
If the spectral curve has finite genus, then it is given by the normalization of the equation $$\eta^2 = Tr(H(a))^2 -4 =: P(a).$$
By definition of the spectral curve for immersions into $S^3$ we have two involutions $\rho$ and $\sigma$ on the spectral curve.  These involutions induce involutions on the spectral parameter plane which are given by $\rho : a \mapsto \bar a$ and $\sigma : a \mapsto -a.$
Thus the polynomial $P$ is even and has real coefficients.
\end{Rem}

\subsection{Polynomial Killing Fields}
The definition of a spectral curve for the family of Dirac operators $D_a$ is the same as  for the focussing nonlinear Schr\"odinger  equation. In the case of finite spectral genus, hyperelliptic solutions of the nonlinear Schr\"odinger equation are constructed in \cite{Pr}, under some further restrictions.  We use another approach here.
For equivariant tori  we show the existence of a endomorphisms-valued $1-$form $X$ depending on the spectral parameter $a$ and the profile curve parameter $y$, whose eigenlines for generic $a \in \C$ coincides with the space of eigensolutions of $D_a.$ If $X$ extends meromorphically to $a= \infty,$ i.e., if $X$ is polynomial in $a$, we call $X$ a polynomial Killing field, since it preserves the spectral curve in $y$ direction (see lemma \ref{DetXpreserved}).  The equation on the polynomial Killing field is linear and explicitly solvable and gives rise to a hierarchy of ordinary differential equations on the potential $q.$ 
In order to define a polynomial Killing field we need the following proposition from  \cite{BoPP} (Proposition 3.1).
\begin{Pro}[\cite{BoPP}]
For a family of  elliptic operators, which depends holomorphically on a parameter in a connected $1-$dimensional complex manifold $M,$ the minimal kernel dimension is generic and attained away from isolated points $p_i \in N \subset M.$ Further the vector bundle over $M\setminus N$ defined by the kernels of the elliptic operators extends through the isolated points with  higher dimensional kernel and is holomorphic.
\end{Pro}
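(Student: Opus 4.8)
The plan is to prove the statement by reducing the infinite-dimensional kernel problem to finite-dimensional linear algebra over the local ring of holomorphic functions, after which the hypothesis $\dim_\C M = 1$ does all the work. Ellipticity over a compact domain makes each $D_\lambda$ a Fredholm operator between suitable Sobolev completions, and since $M$ is connected the index $\operatorname{ind} D_\lambda$ is constant. First I would localize: near a point $\lambda_0 \in M$ pick a finite-dimensional subspace $W$ of the target complementing $\im D_{\lambda_0}$ and form the augmented operator $\tilde D_\lambda(u,w) = D_\lambda u + w$. Surjectivity is an open condition, so on a small disk $\Delta \subset M$ about $\lambda_0$ all $\tilde D_\lambda$ are surjective; their kernels then have the constant dimension $d := \operatorname{ind} D_\lambda + \dim W$ and, varying holomorphically, assemble into a holomorphic vector bundle $E \to \Delta$, which is holomorphically trivial since $\Delta$ is a disk.

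The second step identifies $\ker D_\lambda$ with the kernel of a holomorphic family of matrices. The projection $(u,w)\mapsto w$ is a holomorphic bundle map $\pi_\lambda \colon E \to W$ with $\ker \pi_\lambda = \ker D_\lambda$, and under a trivialization $E \cong \Delta \times \C^d$ it becomes a matrix $\pi_\lambda \colon \C^d \to \C^m$ with entries holomorphic in $\lambda$. Its rank is lower semicontinuous, so there is a maximal value $r$; the locus where the rank drops below $r$ is the common zero set of the $r\times r$ minors of $\pi_\lambda$. As these are holomorphic on the connected one-dimensional $\Delta$ and not all identically zero, their common zero set $N$ is a set of isolated points. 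Hence $\dim\ker D_\lambda = d - \operatorname{rank}\pi_\lambda$ attains its minimum $d-r$ exactly on $\Delta\setminus N$ and is strictly larger on $N$, which already yields the first assertion.

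Finally I would extend the generic kernel bundle across the points of $N$, and this is where one-dimensionality is indispensable. Near a point $\lambda_0 \in N$ the local ring $R = \mathcal O_{\lambda_0} \cong \C\{\lambda\}$ is a discrete valuation ring, hence a principal ideal domain, so the Smith normal form applies to $\pi_\lambda$ over $R$: one may write $\pi_\lambda = U_\lambda\, \operatorname{diag}(\lambda^{a_1},\dots,\lambda^{a_r},0,\dots,0)\, V_\lambda$ with $U_\lambda, V_\lambda$ invertible over $R$ and exponents $a_i \ge 0$. The $R$-module $\{\,v \in R^d : \pi_\lambda v = 0\,\}$ is then free of rank $d-r$, generated by the last $d-r$ columns of $V_\lambda^{-1}$; these furnish $d-r$ holomorphic, pointwise linearly independent sections of $E$ and thus a rank $d-r$ holomorphic subbundle. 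For $\lambda \neq \lambda_0$ the entries $\lambda^{a_i}$ are nonzero, so this subbundle agrees fibrewise with $\ker\pi_\lambda = \ker D_\lambda$, while at $\lambda_0$ its fibre sits as a proper subspace of the genuinely larger kernel $\ker \pi_{\lambda_0}$. This is the sought holomorphic extension, and the local extensions patch since they coincide with the canonical kernel bundle on the dense open set $M\setminus N$. The main obstacle is exactly this extension step: for $\dim_\C M > 1$ the local ring ceases to be a PID, Smith normal form is unavailable, and the kernel sheaf need not be locally free, so the statement genuinely relies on $M$ being a curve.
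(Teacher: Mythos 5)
The paper contains nothing to compare your argument against: this proposition is imported verbatim from \cite{BoPP} (Proposition 3.1 there) and Heller gives no proof, only the citation. Judged on its own, your proof is correct, and it is the standard holomorphic Fredholm-theory argument that underlies results of this type. Your three moves are all sound: (1) augmenting $D_\lambda$ by a finite-dimensional complement $W$ of $\im D_{\lambda_0}$ to get a surjective family $\tilde D_\lambda$ whose kernels form a holomorphic bundle $E$ of constant rank $d$; (2) identifying $\ker D_\lambda$ with $\ker\pi_\lambda$ for the holomorphic bundle map $\pi_\lambda\colon E\to W$, so that discreteness of the degeneracy locus follows from the maximal minors of $\pi_\lambda$ being holomorphic and not all identically zero on a one-dimensional connected base; (3) extending the kernel bundle across a degeneracy point via Smith normal form over the discrete valuation ring $\C\{\lambda\}$, the last $d-r$ columns of $V_\lambda^{-1}$ giving a pointwise independent holomorphic frame that agrees with $\ker D_\lambda$ off the bad point. (Equivalent formulations of step (3) in the literature use that a torsion-free coherent sheaf on a Riemann surface is locally free, or divide a meromorphic frame by its vanishing order; your version is clean.) Three things you assert without proof are standard but deserve a word: that the kernels of a holomorphic family of surjective Fredholm operators form a holomorphic bundle (restrict $\tilde D_\lambda$ to a closed complement of $\ker\tilde D_{\lambda_0}$, invert for nearby $\lambda$, and use the resulting graph map as a trivialization); that a holomorphic bundle on a disk is holomorphically trivial (Grauert); and, since your genericity statement is proved disk by disk, the one-line globalization that the generic kernel dimension agrees on overlapping disks, so by connectedness of $M$ it is a global constant and $N$ is globally discrete. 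Finally, your closing observation is exactly right and worth keeping: Fredholmness requires the implicit compactness of the underlying manifold (here $T^2$ or $S^1$), and the extension step genuinely needs $\dim_\C M=1$, since for a family like $\pi_{(\lambda_1,\lambda_2)}(v_1,v_2)=\lambda_1v_1+\lambda_2v_2$ over $\C^2$ the generic kernel line does not extend through the origin.
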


\begin{Cor}
Let $f : T^2 \rightarrow S^3$ be an equivariant conformal immersion and $D_a$  the associated family of Dirac type operators on $W := S^1 \times \C^2$. For a fixed $a \in \C$ we define
$$\mathcal E_a := \{X \in \Gamma(\text{End}_0(W)) \ | \ D_a X = 0\},$$
where $\text{End}_0(W)$ denote the trace free endomorphisms of $W$ and $D_a$ is the induced differential on $\Gamma(\text{End}_0(W))$ by Leibniz rule. 
Then there is a holomorphic line bundle $\mathcal E$ over $\C$ whose fiber over a generic spectral parameter $a \in \C$ coincides with $\mathcal E_a.$
\end{Cor}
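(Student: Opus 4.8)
The plan is to obtain $\mathcal E$ directly as an application of the Proposition of \cite{BoPP} quoted above, taking $M = \C$ and the family of induced operators $\{D_a\}_{a\in\C}$ acting on $\Gamma(\text{End}_0(W))$. That proposition produces, for any holomorphic family of elliptic operators over a connected one-dimensional complex manifold, a holomorphic vector bundle whose fiber over a generic parameter is the kernel and which extends through the isolated points where the kernel jumps up. Thus the three things I must establish are: (i) that $\{D_a\}$ is a \emph{holomorphic} family of \emph{elliptic} operators; (ii) that the minimal kernel dimension equals $1$; and (iii) that this minimal dimension is attained generically. Granting these, the proposition yields a holomorphic bundle $\mathcal E$ over $\C$ of rank equal to the minimal kernel dimension, hence a line bundle, whose generic fiber is $\mathcal E_a$, which is precisely the assertion.

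The heart of the argument is the computation of the kernel, which I would carry out as follows. Writing $D_a = \partial_y + \mathcal U_a$ with $\mathcal U_a = \left(\begin{smallmatrix} -ia & 2i\bar q \\ 2iq & ia \end{smallmatrix}\right)$, the Leibniz-induced operator on endomorphisms is $D_a X = \partial_y X + [\mathcal U_a, X]$. Hence $D_a X = 0$ is a linear first-order ODE with $\omega$-periodic coefficients, and every solution has the form $X(y) = \Phi(y)\,X_0\,\Phi(y)^{-1}$, where $\Phi$ is the fundamental solution of (\ref{dirac}) normalized by $\Phi(0) = \Id$ and $X_0 = X(0) \in \text{End}_0(\C^2)$. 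Since sections of $\text{End}_0(W)$ must be $\omega$-periodic in $y$ and $\Phi(y+\omega) = H(a)\Phi(y)$, one finds $X(y+\omega) = H(a)X(y)H(a)^{-1}$; evaluating at $y=0$ shows periodicity forces $[H(a),X_0] = 0$, and conversely $[H(a),X_0]=0$ gives $X(\,\cdot+\omega)$ and $X(\cdot)$ the same initial datum, so by uniqueness for the $\omega$-periodic linear ODE they coincide. This produces a linear isomorphism
$$\mathcal E_a \;\cong\; \{\,X_0 \in \text{End}_0(\C^2) : [H(a),X_0] = 0\,\}.$$
For generic $a$ the preceding Lemma guarantees that $H(a)\in SL(2,\C)$ is diagonalizable with two distinct eigenvalues, so its full commutant is the two-dimensional algebra $\C[H(a)] = \Span\{\Id, H(a)\}$, whose trace-free part is spanned by the single element $H(a) - \tfrac12\Tr(H(a))\Id$. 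As this element is nonzero whenever $H(a)$ is non-scalar, $\dim_\C \mathcal E_a = 1$ off the analytic locus $\{\Tr(H(a))^2 - 4 = 0\}$, and it cannot drop below $1$ there. This pins the minimal (and generic) kernel dimension to exactly $1$, giving (ii) and (iii).

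It then remains to verify the hypotheses (i) of the cited proposition, which I expect to be the only genuinely technical point. Ellipticity: $D_a$ is a first-order ordinary differential operator on the compact one-manifold $S^1$ with leading symbol $\partial_y$, hence elliptic and Fredholm of index $0$ on the Sobolev completions of $\Gamma(\text{End}_0(W))$, and the same holds for the induced operator $\partial_y + [\mathcal U_a,\,\cdot\,]$ on endomorphisms. Holomorphic dependence: $\mathcal U_a$ depends affinely, in particular holomorphically, on $a$ through its diagonal entries $\mp i a$, and the commutator $[\mathcal U_a,\,\cdot\,]$ inherits this dependence, so $\{D_a\}$ is a holomorphic family over the connected one-dimensional manifold $\C$. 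With ellipticity and holomorphy in place the proposition applies verbatim; combined with the generic rank count of the previous paragraph it yields the holomorphic line bundle $\mathcal E \to \C$ with $\mathcal E|_a = \mathcal E_a$ for generic $a$. The main obstacle is thus not the dimension count, which follows cleanly from the distinct-eigenvalue Lemma, but the careful identification of $\{D_a\}$ as an elliptic family in the precise functional-analytic sense required to invoke the abstract result.
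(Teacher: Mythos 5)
Your proof is correct and follows essentially the same route as the paper: both reduce $D_a X = 0$ to the Lax equation $X' = [X,L]$, use the preceding lemma that the holonomy $H(a)$ is generically diagonalizable with distinct eigenvalues to conclude that periodic solutions correspond to a one-dimensional space (the trace-free commutant of $H(a)$), and rely on the quoted proposition of \cite{BoPP} for the holomorphic extension through the exceptional points. The only cosmetic differences are that the paper phrases the dimension count via parallel, periodic sections $s,v,w$ of the eigenline bundles $L^+_a$, $L^-_a$ (forcing $v=w=0$ and $s$ constant) rather than via the commutant of the monodromy, and that you are more explicit in checking the ellipticity and holomorphic-dependence hypotheses, which the paper takes for granted.
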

\begin{proof}
We want to show that the bundle $\mathcal E$ is of rank $1.$
 Let $L = \dvector{ - i a &  2 i \bar q\\  2 i q &  i a }$. 
The equation $D_a X = 0$ is equivalent to 
\begin{equation}\label{gleichung1}
X'  =  [X, L],
\end{equation}
where the derivative is taken with respect to the profile curve parameter $y.$ 
This is a first order ordinary differential equation and therefore $X$ is fully determined by its initial value at $y = 0.$ We denote the initial value of $X$ by $X_0.$\\

At a generic point $a \in \C$ the holonomy of $D_a$ on $W$ is diagonalizable and has distinct eigenvalues $\mu^{\pm1},$ $\mu \neq \pm 1$. Let $L^+_a$ and $L^-_a$ define the corresponding eigenlines. With respect to the splitting $W = L^+_a \oplus L^-_a$ we get that  $X(a)= \dvector{- s & v \\ w & s},$ where $s \in \Gamma(\C)$, $v \in \Gamma(L^-_a \otimes (L^+_a)^*)$  and $w \in \Gamma(L^+_a \otimes (L^-_a)^*).$ Because of equation (\ref{gleichung1}) the sections $s, v, w$ are parallel (and periodic) with respect to the induced connection on the corresponding line bundles. This implies that $s$ is constant and $v=w=0$.
Therefore $X$ is fixed by the initial value of $s$ and  $\mathcal E$ is a complex line bundle over $\C$. 
\end{proof}

\noindent
We want now to restrict to the case where the line bundle $\mathcal E$ extends through $a = \infty$ to a holomorphic line bundle over $\C P^1.$ Then $\mathcal E$ has finite negative degree as a holomorphic sub bundle of the trivial endomorphisms bundle which can be computed by
$$\text{deg }  \mathcal E =   \sum_{p\in \C P^1}\text{ord}_pY,$$
for any meromorphic section $Y$  of $\mathcal E.$  A holomorphic line bundle over $\C P^1$ is determined by its degree and is holomorphic isomorphic to a point bundle $L ((\deg \mathcal E )\infty)$.  
Thus there is a section $X$ which is holomorphic and non vanishing for $a \in \C.$ 
\begin{Def}
A non vanishing meromorphic section $X$ of $\mathcal E$ whose only pole is at $a = \infty $ is called a polynomial Killing field of $\Sigma$.
\end{Def}
\noindent
By definition $X$ is polynomial in $a \in \C P^1$ and thus it is given by
$$X = \sum_{i = 0}^{-\deg \mathcal E} X_i a^i ,$$
where $X_i$ are elements of  $\Gamma($End$_0(W))$ depending on $y$ only. 
For $a \in \R$ we have that the holonomy of $D_a$ is $SU(2)-$valued and the corresponding eigenlines are perpendicular. With respect to a  basis of eigenvectors we can choose the section $s_0 \in L^1_a|_{y=0}$ to be purely imaginary-valued for $a \in  \R.$ Then $X_0$ is $su(W_0)$  valued. Since this property is preserved by the equation  $X' =  [X, L]$,  the polynomial Killing field $X $ is also $su(W)$ valued.

\begin{Lem}\label{DetXpreserved}
Let $X$ be a polynomial Killing field of $\Sigma.$
Then the equation $X' =  [X, L]$ 
 preserves the polynomial $\text{det}(X).$
\end{Lem}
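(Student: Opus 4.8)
The plan is to exploit that $X' = [X,L]$ is a Lax-type equation, so that the evolution in the profile curve parameter $y$ is an isospectral deformation of $X$; every spectral invariant of $X$, and in particular $\det X$, is then automatically independent of $y$. Concretely, since $X$ is a section of $\End_0(W)$, the matrix $X(a,y)$ is trace-free and $2 \times 2$ for every spectral parameter $a$, so its characteristic polynomial is $\lambda^2 + \det X$ and one has the algebraic identity $\det X = -\tfrac{1}{2}\Tr(X^2)$. It therefore suffices to prove that $\Tr(X^2)$ is constant in $y$.

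First I would differentiate $\Tr(X^2)$ with respect to $y$, obtaining $\tfrac{d}{dy}\Tr(X^2) = 2\,\Tr(X X')$ by the cyclic invariance of the trace. Substituting the polynomial Killing field equation $X' = [X,L] = XL - LX$ then gives $\Tr(X X') = \Tr(X^2 L) - \Tr(X L X)$. Applying cyclicity once more yields $\Tr(X L X) = \Tr(X^2 L)$, so the two terms cancel and $\tfrac{d}{dy}\Tr(X^2) = 0$. Combined with the identity $\det X = -\tfrac{1}{2}\Tr(X^2)$, this shows $\tfrac{d}{dy}\det X = 0$.

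Finally I would record that the computation is uniform in the spectral parameter. Since $X = \sum_i X_i a^i$ is polynomial in $a$, the quantity $\det X$ is a polynomial in $a$ whose coefficients are functions of $y$, and the vanishing of $\tfrac{d}{dy}\det X$ for every $a \in \C$ forces each of these coefficient functions to be constant in $y$. Hence the entire polynomial $\det X$ is preserved along the flow, as claimed.

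I expect no genuine obstacle here: the argument is the standard isospectrality computation for Lax equations, driven entirely by the cyclic property of the trace applied to the commutator $[X,L]$. The only points deserving a moment's care are the trace-free reduction $\det X = -\tfrac{1}{2}\Tr(X^2)$, which is valid precisely because $X \in \End_0(W)$ is $2 \times 2$, and the observation that the conservation must be read coefficient-wise in the spectral parameter $a$ rather than as a statement about a single fixed $a$.
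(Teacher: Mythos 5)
Your proposal is correct and follows essentially the same route as the paper: both reduce $\det X$ to a multiple of $\Tr(X^2)$ via trace-freeness, differentiate, and kill the resulting term by cyclicity of the trace applied to the commutator $[X,L]$. Your write-up is in fact slightly more careful than the paper's (you have the correct sign $\det X = -\tfrac{1}{2}\Tr(X^2)$, where the paper states $2\det X = \Tr X^2$, and you make explicit the coefficient-wise reading in the spectral parameter $a$), but these are cosmetic refinements of the identical argument.
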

\begin{proof}
Since tr$(X) = 0$ we get $2$det$(X)$ = tr$X^2.$ Differentiating both sides yields
$$(\text{det}X)' = \text{tr}(X') X = \text{tr}([X, L] X) = \text{tr}(X L X - L X^2) = 0. $$
\end{proof}
The polynomial Killing field $X$  has degree $p+1.$ Therefore we get that det$X$ is a polynomial of degree $2p + 2$. The equation 
$$\eta^2 = \text{det } X$$
thus defines  a possibly singular algebraic curve of genus $p.$
\begin{The}\label{deg}
The normalization of the algebraic curve given by $$\eta^2 = \text{ det } X$$ is the spectral curve.
\end{The}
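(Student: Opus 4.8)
The plan is to exhibit a biholomorphism between the (normalized) double covers $\eta^2=\det X$ and $\eta^2=P(a)=\Tr(H(a))^2-4$ of the $a$-plane, the key input being that the polynomial Killing field $X$ and the holonomy $H$ share their eigenlines. The starting algebraic fact is already implicit in the construction of $\mathcal E$: since $X$ solves $X'=[X,L]$ it is parallel for the connection $\partial_y+L$, hence commutes with the holonomy $H(a)$ for every $a$; equivalently, as in the proof of the corollary defining $\mathcal E$, with respect to the eigenline splitting $W=L^+_a\oplus L^-_a$ of $H(a)$ the field $X(a)$ is diagonal. Thus for generic $a\in\C$ the matrices $X(a)$ and $H(a)$ have the same two eigenlines $L^\pm_a$. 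Writing the eigenvalues of $X(a)$ as $\pm\nu(a)$ (they sum to zero since $\Tr X=0$), Lemma \ref{DetXpreserved} makes $\det X(a)=-\nu(a)^2$ a well-defined polynomial in $a$, while the eigenvalues of $H(a)$ are $\mu(a),\mu(a)^{-1}$ with $\det H=1$, so that $P(a)=(\mu-\mu^{-1})^2$.

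Next I would define the comparison map. The spectral curve is the normalization of $\{(b,a)\mid b\text{ is an eigenvalue of }H(a)\}$, so a point over a generic $a$ is precisely a choice of eigenline $L^\pm_a$ of $H(a)$. To such a point I assign the eigenvalue of $X(a)$ on that eigenline, i.e. the point $(\eta,a)=(i\nu^\pm,a)$ of $\{\eta^2=\det X\}$, where $\nu^\pm$ is the scalar by which $X(a)$ acts on $L^\pm_a$; this lies on the curve since $\eta^2=-\nu^2=\det X(a)$. Because $X(a)$ and $H(a)$ are simultaneously diagonalized, the two eigenlines are sent to the two \emph{distinct} points $(\pm i\nu,a)$ (distinct since $\det X\not\equiv0$, so $\nu\neq0$ generically), and conversely a generic point of $\{\eta^2=\det X\}$ determines through $\eta=i\nu$ an eigenvalue, hence an eigenline, of $X(a)$, which is an eigenline of $H(a)$. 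This produces a biholomorphism $\phi$ between the open subsets lying over the generic locus $a\in\C\setminus S$, with $S$ finite (the $a$ where $H$ or $X$ degenerates, together with $\infty$), and $\phi$ commutes with the projection to the $a$-plane.

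Finally I would extend $\phi$ to the normalizations. Both $\eta^2=\det X$ (with $\det X$ of even degree $2p+2$, hence unbranched over $a=\infty$) and the compactified spectral curve $\Sigma$ are compact Riemann surfaces, and $\phi$ is a biholomorphism between dense open subsets whose complements are finite. By the removable singularity theorem a holomorphic map from such an open subset into a compact (projective) Riemann surface extends holomorphically; applying this to $\phi$ and to $\phi^{-1}$ shows $\phi$ extends to a biholomorphism of the two normalizations. I expect the step needing the most care to be exactly this matching over the exceptional fibers and over $a=\infty$: one must be sure the two branch loci coincide and that the two points added at infinity to $\Sigma$ correspond to the two unbranched points of $\eta^2=\det X$ over $\infty$. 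The commuting relation already handles the branch points, since at a genuine double eigenline of $H$ (a nontrivial Jordan block) every trace-free $X$ commuting with $H$ is nilpotent, forcing $\det X=0$ there as well; and the soft extension argument disposes of the remaining identifications automatically once $\phi$ is known on a dense open set.
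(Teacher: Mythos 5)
Your proposal is correct in substance but reaches the conclusion by a genuinely different mechanism than the paper. The shared core is the fact that $X$ commutes with the holonomy $H(a)$ (the paper phrases this as: the eigenlines of the polynomial Killing field for generic $a$ give exactly the solutions with monodromy of $D_a u = 0$). From there the paper does not construct any map between the two curves: it matches the \emph{branch loci}, arguing that the spectral curve is branched over $a$ precisely where the eigenlines coalesce to odd order, that at such $a$ the field $X(a)$ is trace-free, non-vanishing and non-diagonalizable, hence conjugate to $\dvector{0 & 1\\ 0 & 0}$, and therefore that the odd-order zeros of $\det X$ are exactly the branch points; the identification of the two curves then follows because a double cover of the $a$-plane is determined up to isomorphism by its branch locus. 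You instead build the isomorphism explicitly --- eigenline of $H(a)$ goes to the eigenvalue of $X(a)$ on it --- and extend it by removable singularities. This buys a concrete fiber-preserving biholomorphism rather than an abstract identification, and it makes the branch-point correspondence a consequence rather than a separate verification.

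Two caveats on your extension step, both repairable with ingredients you already have. First, the exceptional set $S$ need not be finite: besides the finitely many branch points it contains all $a$ with $H(a)=\pm\Id$ (double points of the variety $Spec(V/L)$ which the normalization separates), and this is in general an infinite discrete subset of $\C$ accumulating at $a=\infty$ --- already so for homogeneous tori. This does not break the argument, since your extension is local at each puncture, but ``finite'' must be replaced by ``discrete, accumulating only at $\infty$'', and the two points over $a=\infty$ must then be handled last, after all finite punctures are filled. Second, the removable singularity theorem you invoke --- a holomorphic map from the complement of a finite set into a compact Riemann surface extends --- is false as stated: maps into $\CP^1$ can have essential singularities. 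What saves you is precisely the property you mention in passing, namely that $\phi$ commutes with the projection to the $a$-plane: near any puncture, $\phi$ maps a punctured disk into the preimage of a small disk in the $a$-plane, which is a union of at most two coordinate disks of the target, so $\phi$ is locally bounded in a chart and extends by Riemann's classical theorem. With these repairs the argument closes: the extensions of $\phi$ and $\phi^{-1}$ are mutually inverse by the identity theorem, giving the asserted identification of the normalization of $\eta^2 = \det X$ with the spectral curve.
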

\begin{proof}
The eigenlines of the polynomial Killing field for generic $a \in \C$ gives exactly the solutions with monodromy  to the equation $D_a u = 0.$ The branch points of the spectral curve are given by those points where these eigenlines  coalesce to an odd order. At these points $a \in \C$ the polynomial Killing field is not diagonalizable and therefore it has only one eigenvalue, which must be $0$. Since tr$X(a) = 0$ and $X$ is non vanishing , we get that $X(a) $ is conjugate to the matrix $\dvector{0 & 1\\ 0 & 0}.$ Thus det$X$  has an odd order zero at $a \in \C P^1$ if and only if the spectral curve is branched over $a$.  
\end{proof}

\begin{Def}
Let X be the polynomial Killing field of an immersion. Then the number $p :=  - \deg \mathcal E - 1 $ is called the arithmetic spectral genus of the immersion.
\end{Def}

\begin{Lem}\label{X}
The coefficient $X_{p+1}$ and $X_p$ of a polynomial Killing field  $X$  can be chosen  to be
$$X_{p+1} = \dvector{- i & 0\\ 0 & i } \quad X_p =  \dvector{- i b  & 2 i \bar q\\ 2 i q & i b}.$$
\end{Lem}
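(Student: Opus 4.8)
The plan is to read the two leading coefficients directly off the defining equation $X' = [X,L]$ by expanding everything as a polynomial in the spectral parameter $a$ and comparing coefficients. Write $L = aL_1 + L_0$ with
$$L_1 = \dvector{-i & 0 \\ 0 & i}, \qquad L_0 = \dvector{0 & 2i\bar q \\ 2iq & 0},$$
and expand $X = \sum_{k=0}^{p+1} X_k a^k$ (recall $-\deg\mathcal E = p+1$). Substituting into $X' = [X,L]$ and collecting the coefficient of $a^j$ yields the recursion
$$X_j' = [X_{j-1}, L_1] + [X_j, L_0], \qquad X_{-1} = X_{p+2} = 0.$$
Solving this from the top down is what pins down $X_{p+1}$ and $X_p$.

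The coefficient of $a^{p+2}$ gives $[X_{p+1}, L_1] = 0$, so $X_{p+1}$ commutes with the diagonal matrix $L_1$, which has distinct eigenvalues; hence $X_{p+1}$ is diagonal. Since the polynomial Killing field is $su(W)$-valued and trace free, $X_{p+1} = \dvector{i\beta & 0 \\ 0 & -i\beta}$ for a real function $\beta$. To see that $\beta$ is constant I would pass to the next coefficient: the $a^{p+1}$ equation reads $X_{p+1}' = [X_p, L_1] + [X_{p+1}, L_0]$, and both commutators on the right are off-diagonal while $X_{p+1}'$ is diagonal; comparing diagonal parts forces $X_{p+1}' = 0$. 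As $X$ has degree $p+1$ we have $\beta \neq 0$, so rescaling the Killing field by the real constant $-1/\beta$ normalizes $X_{p+1} = \dvector{-i & 0 \\ 0 & i}$. This normalization is exactly the freedom of choosing $X$ up to (real) scale, and the sign is fixed so as to produce the signs appearing in the claimed $X_p$.

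With $X_{p+1}$ so normalized, the off-diagonal part of the same $a^{p+1}$ equation, $0 = [X_p, L_1] + [X_{p+1}, L_0]$, becomes a purely algebraic pointwise equation for the off-diagonal entries of $X_p$. A short computation gives $[X_{p+1}, L_0] = \dvector{0 & 4\bar q \\ -4q & 0}$ and, writing $X_p = \dvector{\alpha & \gamma \\ \delta & -\alpha}$, $[X_p, L_1] = \dvector{0 & 2i\gamma \\ -2i\delta & 0}$; matching entries yields $\gamma = 2i\bar q$ and $\delta = 2iq$. The diagonal entry $\alpha$ is left undetermined at this order—it is fixed only by the $a^p$-equation further down the hierarchy—but the $su(W)$ condition forces it to be purely imaginary and trace free, so I write $\alpha = -ib$ with $b$ a real function, giving the claimed $X_p$. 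I do not expect a genuine obstacle here: the statement is essentially an exercise in comparing coefficients in the recursion. The only points that need care are justifying that $X_{p+1}$ is diagonal and constant (the eigenvalue/parity argument above) and spelling out that ``can be chosen'' refers to the real rescaling normalizing the leading coefficient together with the sign convention that then propagates to $X_p$.
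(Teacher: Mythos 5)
Your proof is correct and follows essentially the same route as the paper: split $L = aL_1 + L_0$, expand $X = \sum_k X_k a^k$, compare coefficients in $X' = [X,L]$ to obtain the recursion $X_j' = [X_{j-1},L_1] + [X_j,L_0]$, deduce from the $a^{p+2}$ coefficient that $X_{p+1}$ is trace-free diagonal, normalize by rescaling, and read the off-diagonal entries of $X_p$ from the $a^{p+1}$ coefficient. The one step you handle differently is the constancy of $X_{p+1}$: the paper invokes Lemma~\ref{DetXpreserved}, observing that the top coefficient $-s^2$ of the preserved polynomial $\det X$ must be constant in $y$, whereas you note that in the $a^{p+1}$ equation the right-hand side $[X_p,L_1] + [X_{p+1},L_0]$ is off-diagonal while the left-hand side $X_{p+1}'$ is diagonal, so both sides vanish. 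Your variant is self-contained and has the small advantage of delivering the identity $[X_p,L_1] + [X_{p+1},L_0] = 0$, which pins down the off-diagonal part of $X_p$, in the same stroke; the paper's variant instead reuses a lemma it needs elsewhere anyway. Your remaining bookkeeping (reality of $\beta$ and $b$ from the $su(W)$-condition, $\beta \neq 0$ because $X$ has exact degree $p+1$, rescaling by the real constant $-1/\beta$) is a correct and slightly more explicit account of what the paper compresses into ``$s$ is a constant and we can normalize it to be $i$.''
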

\begin{proof}
The degree of the polynomial Killing field is constant in $y$. The Lax type equation 
$$X' = [X, L]$$
yields differential equation on each coefficient $X_i$ of $X$. Since $L = \dvector{-i & 0\\ 0 & i } a  + \dvector{0  &  2 i \bar q\\  2 i q & 0}$  we obtain:
$$X_i' = [X_i, \dvector{0  &   2 i \bar q\\   2 i q & 0}] + [X_{i-1}, \dvector{ -i & 0\\ 0 & i }], \quad \text{ for } i = 1, ..., p+1,$$
Further since $X_{p+2} = 0$ we get that 
$$[X_{p+1}, \dvector{ -i & 0\\ 0 & i  }] = 0.$$
Thus  with out loss of generality $X_{p+1}   = \dvector{ -s & 0\\ 0 &  s }.$ The term  $-s^2$ is the top coefficient of the polynomial det$X$, which is constant along $y.$ Therefore $s$ is a constant and we can normalize it to be $i.$
Then we obtain
$$X_{p+1}' = [X_{p+1}, \dvector{0  &  2 i \bar q\\   2 i q & 0}] + [X_{p}, \dvector{ -i & 0\\ 0 & i }] = 0,$$
and this yields $X_{p} =  \dvector{ -i b  &  2 i \bar q\\  2 i q & i b}.$
\end{proof} 

Now  we  build in the symmetry of the spectral curve coming from an immersion $f: T^2 \rightarrow S^3.$ It is given by the involution $\sigma$, see Lemma \ref{sigma11}. We obtain
$$\text{det}(X(a)) = \text{det} (X(-a)).$$
The branch points are then symmetric with respect to the real axis and the polynomial det $X$ is even.  
Since $$X = \dvector{ -i & 0\\ 0 &  i  }a^{p+1} + \dvector{-i b  &   2 i \bar q\\  2 i q & i b}a^{p}  + \text{ lower oder terms,}$$
the determinant is given by $$\text{det}X =  a^{2p+2} + b a^{2p+1} + \text{ lower order terms. }$$
Therefore $b = 0.$
 The involution $\rho$ gives that the branch points are symmetric with respect to the imaginary axis, which is satisfied for  $X_i \in \Gamma(su(W))$.

\subsection{Equivariant Constrained Willmore Immersion}
Comparing the differential equations on $q$ given by $X' = [X,L]$ to the Euler-Lagrange equations of constrained Willmore tori we want to show the following theorem, which summarizes the Propositions \ref{CMC2}, \ref{Hopf2} and \ref{rest}.
\begin{The}
The spectral genus of equivariant constrained Willmore tori in $S^3$  is at most $3.$ In particular,
\begin{itemize}
\item equivariant tori in $S^3$ have spectral genus $g \leq1$ if and only if they are isothermic and constrained Willmore and thus CMC  in a space form. 
\item Non isothermic equivariant tori in $S^3$ have (arithmetic) spectral genus $2$ if and only if they are constrained Willmore and associated to a Hopf cylinder as constrained Willmore surfaces. 
\item Non isothermic equivariant tori in $S^3$ with (arithmetic) spectral genus $3$ are constrained Willmore.
\end{itemize}
\end{The}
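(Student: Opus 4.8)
The engine of the proof is the hierarchy of ordinary differential equations encoded in the Lax equation \eqref{gleichung1}. Writing $L = aP + Q$ with $P = \dvector{-i & 0\\ 0 & i}$ and $Q = \dvector{0 & 2i\bar q\\ 2iq & 0}$ and expanding a polynomial Killing field $X = \sum_{i=0}^{p+1}X_i a^i$, the equation $X' = [X,L]$ separates, power of $a$ by power of $a$, into the recursion
\begin{equation*}
X_i' = [X_{i-1},P] + [X_i,Q], \qquad X_{p+2} = X_{-1} = 0 .
\end{equation*}
At each level the off-diagonal entries of $X_{i-1}$ are fixed \emph{algebraically} by those of $X_i' - [X_i,Q]$, while its diagonal entry is recovered by a single integration in $y$, which introduces one real constant; these constants are where $C$ and $\lambda$ enter. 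Lemma \ref{X} pins down the top two coefficients ($X_{p+1}=P$, $X_p=Q$), and the bottom relation $X_0' = [X_0,Q]$ is the closure condition: it is exactly the stationary equation of the relevant member of the hierarchy, hence the ODE satisfied by $q$.

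First I would run this recursion explicitly for $p = 0,1,2,3$ and read off the closure equation in each case, always imposing $X_i\in\Gamma(su(W))$ and the evenness of $\det X$ forced by $\rho$ and $\sigma$. For $p=0$ the closure forces $q$ constant, i.e. the homogeneous tori; for $p=1$ a short computation gives $q'' + 8|q|^2q + c\,q = 0$ with $c\in\R$, which, combined with the geometric normal form $q = \tfrac14(\kappa_{m,n}+i\Omega)$ and the reality constraints, collapses to the CMC equation $q''+8(|q|^2+C)q = Hq$ with $\xi$ constant; since $\xi'=0$ is precisely the isothermic condition, this together with Theorem \ref{CMC} yields the first bullet. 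For $p=2$ one integration constant is forced to make $\Im q$ constant, which by the Hopf normal form $4q = \kappa + 2i$ characterises Hopf cylinders, and the associated-family construction of Section \ref{associated} upgrades this to the statement that non-isothermic $g=2$ tori are exactly the constrained Willmore tori associated to a Hopf cylinder. For $p=3$ the closure equation reproduces the full system \eqref{EL} with $\lambda\neq 0$, giving the third bullet.

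The global bound $g\le 3$ is the converse construction and the heart of the argument. Starting from a solution $q$ of \eqref{EL} together with the Gau\ss--Codazzi equation $2\xi' = \bar q' q - q'\bar q$, I would build by hand a polynomial Killing field of degree at most $4$: write each $X_i$, $i=4,\dots,0$, as a trace-free differential polynomial in $q,\bar q,q',\bar q',|q|^2,\xi$ with coefficients depending on $C$ and $\lambda$, and verify $X'=[X,L]$ directly. The decisive point is that \eqref{EL} is exactly the identity needed to eliminate $q''$ and close $X_0'=[X_0,Q]$ \emph{without} a fifth coefficient. Such an $X$ is nonzero on $\C$ with a pole of order $\le 4$ only at $a=\infty$, hence a meromorphic section of the rank-one bundle $\mathcal E$. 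Since constrained Willmore tori are of finite type by \cite{B}, $\mathcal E$ extends over $a=\infty$ and this section forces $\deg\mathcal E \ge -4$, i.e. the arithmetic spectral genus $p = -\deg\mathcal E - 1 \le 3$, whence $g\le 3$.

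The main obstacle is precisely this closure at degree $4$. One must show that the coupling term $2\Re(\lambda q)$ of \eqref{EL} --- the term mixing $q$ and $\bar q$, responsible for raising the genus above the CMC value $1$ --- feeds into the higher coefficients $X_3,X_4$ in exactly the manner the recursion demands, and that the available integration constants can be matched to the data $(C,\Re\lambda,\Im\lambda)$ and to the integration constant of $\xi$. Getting the ``only if'' directions sharp --- that the Hopf case has genus exactly $2$ rather than less, and that genus $3$ is genuinely attained --- requires in addition tracking when the leading coefficients of the even polynomial $\det X$ degenerate; this degeneration bookkeeping is the part I expect to be most delicate.
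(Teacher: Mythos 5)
Your strategy is essentially the paper's strategy: the same Lax recursion $X_i' = [X_{i-1},P] + [X_i,Q]$ on the coefficients of a polynomial Killing field, normalized at the top as in Lemma \ref{X}; the same closure equations case by case (constant $q$ for $p=0$; the CMC equation for $p=1$, with evenness of $\det X$ forcing isothermicity; stationary mKdV with $d=0$ and $q\mu=\kappa+ir$ for $p=2$; the fourth-order equation \eqref{4th} for $p=3$); and for the bound $g\le 3$ the same mechanism, namely that a solution of \eqref{EL} admits a polynomial Killing field of degree $4$. The paper organizes this last computation slightly differently from your ``direct construction'': it differentiates \eqref{EL} twice to get \eqref{4}, subtracts this from \eqref{4th}, eliminates $q''$ and $\bar q'q'$ using \eqref{EL} and its first integral, and then solves $4c-8C+\lambda=0$ and $2e+\tilde d+8C^2+8cC=0$ for the free constants --- which is exactly your constant-matching step, so that part of your plan is sound.

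The genuine gap is the third bullet. You claim that for $p=3$ ``the closure equation reproduces the full system \eqref{EL}.'' It does not: the closure equation \eqref{4th} is a fourth-order ODE in $q$ alone, with two real constants $(c,e)$, whereas \eqref{EL} is a second-order system in $(q,\xi)$ with parameters $(C,\lambda)$. The implication runs only one way --- \eqref{EL}, differentiated, implies \eqref{4th} --- and that one-way implication is precisely what yields $g\le 3$; it says nothing about whether a given non-isothermic genus-$3$ torus is constrained Willmore. For the third bullet one needs the converse: every non-isothermic solution of \eqref{4th} obeying the $\sigma$-symmetry constraints \eqref{g3symmetry} solves \eqref{EL} for some admissible $(C,\lambda,\xi_0)$. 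The paper supplies a separate argument for this: a solution of \eqref{4th} is determined by the eight real initial data $(q_0,q_0',q_0'',q_0''')$, with $c,e$ then fixed by \eqref{g3symmetry}; non-isothermicity produces a point $y_0$ where, writing $q=q_1+iq_2$, one has $q_1,q_2,q_1',q_2'\neq 0$ and $q_1/q_2\neq q_1'/q_2'$, and at such a point the four real parameters $C,\Re\lambda,\Im\lambda,\xi_0$ can be chosen so that \eqref{EL} and its derivative produce the prescribed $q_0''$ and $q_0'''$; uniqueness of solutions of the two ODEs then identifies $q$ with a constrained Willmore solution. Your proposal contains no substitute for this parameter-matching step. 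The ``degeneration bookkeeping'' you flag at the end concerns sharpness of the genus labels, which is a different (and in the paper, much lighter) issue; without the matching argument the third bullet remains unproven.
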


\begin{Lem}
An equivariant torus $f$ in $S^3$ has spectral genus $0$ if and only if $q \equiv const \neq 0$, i.e.,  if $f$ is homogenous.
\end{Lem}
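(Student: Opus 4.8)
The plan is to prove the equivalence by relating the degree of the polynomial Killing field (hence the arithmetic spectral genus $p$) to the structure of $\det X$, and to use Lemma \ref{X} together with the established normalization $b=0$. The key observation is that spectral genus $0$ means $-\deg\mathcal E = 1$, i.e. $p=0$, so that the polynomial Killing field has the minimal possible degree $p+1=1$. By Lemma \ref{X} the leading coefficient is forced to be $X_{p+1}=X_1=\left(\begin{smallmatrix}-i&0\\0&i\end{smallmatrix}\right)$ and the next coefficient is $X_p=X_0=\left(\begin{smallmatrix}-ib&2i\bar q\\2iq&ib\end{smallmatrix}\right)$, and we have shown $b=0$ from the $\sigma$-symmetry.

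\medskip

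First I would establish the forward direction: \emph{spectral genus $0$ implies $q$ constant.} With $p=0$, the whole polynomial Killing field is $X = X_1 a + X_0$, with $X_1$ constant and $X_0$ as above with $b=0$. Feeding this into the Lax equation $X'=[X,L]$ (equivalently \eqref{gleichung1}), the coefficient of $a^1$ reads $X_1' = [X_1,\left(\begin{smallmatrix}0&2i\bar q\\2iq&0\end{smallmatrix}\right)]+[X_0,\left(\begin{smallmatrix}-i&0\\0&i\end{smallmatrix}\right)]$, which must vanish since $X_1$ is constant, while the coefficient of $a^0$ gives $X_0' = [X_0,\left(\begin{smallmatrix}0&2i\bar q\\2iq&0\end{smallmatrix}\right)]$. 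The first relation, using $b=0$, already forces the off-diagonal entries of $X_0$ to be constant, i.e. $q'\equiv 0$; the second is then automatically satisfied. Thus $q$ is a nonzero constant, and by Remark \ref{kurvenq} and the characterization of $q$ for $(m,n)$-tori, a constant $q$ corresponds precisely to a homogenous torus (constant $\kappa_{m,n}$ and constant curvature form).

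\medskip

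For the converse, \emph{$q$ constant implies spectral genus $0$}, I would run the argument in reverse: assuming $q\equiv\text{const}\neq 0$, I exhibit an explicit polynomial Killing field of degree exactly $1$, namely $X = \left(\begin{smallmatrix}-i&0\\0&i\end{smallmatrix}\right)a + \left(\begin{smallmatrix}0&2i\bar q\\2iq&0\end{smallmatrix}\right)$, and check directly that $X'=[X,L]=0$ (both sides vanish because $X$ is $y$-independent and $L$ is built from the same constant data). This is a nonvanishing section of $\mathcal E$ with a single pole of order $1$ at $a=\infty$, so $-\deg\mathcal E=1$ and hence $p=0$. Computing $\det X = a^2 + 4|q|^2$ confirms via Theorem \ref{deg} that the normalized spectral curve $\eta^2 = a^2+4|q|^2$ is rational, i.e. has genus $0$. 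One must check that this $X$ is genuinely minimal, i.e. that $\mathcal E$ does not admit a nonvanishing section of lower degree; but degree $0$ would force $X$ to be a constant diagonal matrix, which cannot have eigenlines coinciding with the generically distinct eigensolutions of $D_a$, so degree $1$ is indeed minimal.

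\medskip

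\textbf{The main obstacle} I anticipate is being careful about the normalization conventions: one must confirm that the minimal degree of $\mathcal E$ is correctly identified with $p+1$ rather than $p$, and that the $b=0$ normalization (derived from the $\sigma$-involution) is legitimately available in the genus-$0$ case, since the argument establishing $b=0$ compared the $a^{2p+1}$ coefficient of $\det X$ using the explicit leading terms. A subtle point is ruling out the degenerate possibility that $\mathcal E$ fails to extend through $a=\infty$ when $q$ is constant; but for constant $q$ the explicit $X$ above manifestly extends (it is genuinely polynomial of degree $1$), so no such pathology arises. The remaining verifications are routine matrix computations.
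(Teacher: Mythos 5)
Your overall route is the same as the paper's: spectral genus $0$ pins the polynomial Killing field down to degree $1$, Lemma \ref{X} plus the normalization $b=0$ fix its coefficients, the Lax equation then forces $q'\equiv 0$, and conversely a constant $q\neq 0$ yields the explicit degree-$1$ Killing field. Your converse half is correct (and slightly more detailed than the paper, which only remarks that the Killing field has no zeros iff $q\neq 0$). However, the forward half contains a step that fails as written: you attribute $q'\equiv 0$ to the wrong coefficient equation. Write $A=\dvector{-i&0\\0&i}$, $B=\dvector{0&2i\bar q\\2iq&0}$, so $L=Aa+B$ and $X=Aa+X_0$ with $X_0=\dvector{-ib&2i\bar q\\2iq&ib}$. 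The $a^1$-coefficient of $X'=[X,L]$ is $0=[A,B]+[X_0,A]$. This equation contains no derivative of $q$ whatsoever, and it is \emph{identically} satisfied: $X_0-B$ is diagonal, so $[X_0,A]=[B,A]=-[A,B]$. Indeed, this identity is exactly the relation by which Lemma \ref{X} determines the form of $X_p$ in the first place, so it can never produce new information; in particular it cannot ``force the off-diagonal entries of $X_0$ to be constant.''

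The actual content sits in the $a^0$-coefficient $X_0'=[X_0,B]$, which you dismiss as ``automatically satisfied.'' Computing it gives $b'=0$, $q'=2ibq$, $\bar q'=-2ib\bar q$; only after imposing $b=0$ does it collapse to $X_0'=[B,B]=0$, i.e.\ $q'\equiv 0$. Equivalently --- and this is the paper's one-line proof --- $b=0$ means $X_0=B$, hence $X=L$, hence $X'=[X,L]=[L,L]=0$, so $X$ and in particular $q$ is constant. So your two relations have simply traded roles: the first is vacuous, the second carries the theorem. With that correction your argument goes through; you should also close the forward direction's ``$\neq 0$'' claim explicitly, by noting that a polynomial Killing field is non-vanishing by definition while $X(0)=B$ vanishes exactly when $q=0$, which is how the paper excludes $q\equiv 0$.
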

\begin{proof}
The polynomial Killing field of an equivariant torus in $S^3$ with spectral genus $0$ is given by 
$X = L.$
 Then  we get  
$X' = [X, L] = 0$ and $X$ is constant. Therefore 
 \begin{equation}\label{flow1}
 q = const.
 \end{equation} This Killing field has no zeros, if and only if $q \neq 0$.
\end{proof}

\begin{Pro}\label{CMC2}
An equivariant  torus in $S^3$ has spectral genus $1$ if and only if it is CMC in a space form and not homogenous. 
\end{Pro}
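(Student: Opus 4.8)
The plan is to show both implications by analyzing the polynomial Killing field $X$ of degree $p+1 = 2$ (so $X = X_2 a^2 + X_1 a + X_0$) and comparing the differential equations $X' = [X, L]$ it imposes on $q$ with the Euler-Lagrange equation \eqref{EL} for the constrained Willmore problem. By Lemma \ref{X} we may already fix $X_2 = \dvector{-i & 0 \\ 0 & i}$, and by the symmetry discussion at the end of the previous subsection (the involution $\sigma$ forcing $\det X$ to be even, hence $b = 0$) we have $X_1 = \dvector{0 & 2i\bar q \\ 2iq & 0}$. First I would compute the remaining coefficient relations from the Lax equation: the identity $X_i' = [X_i, X_1^{\mathrm{(const)}}] + [X_{i-1}, X_2]$ with $X_1^{\mathrm{(const)}} = \dvector{0 & 2i\bar q\\ 2iq & 0}$, applied at the bottom coefficients $i = 1$ and $i = 0$, yields an explicit expression for the entries of $X_0$ in terms of $q$, $\bar q$, and their derivatives, together with a single scalar ODE on $q$. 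This is the genus-$1$ analogue of the hierarchy computation, and since the degree is low it is fully explicit.

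Next I would write $X_0 = \dvector{-i\zeta & w \\ \bar w & i\zeta}$ with $\zeta$ real (because $X_i \in \Gamma(su(W))$ by the $\rho$-symmetry) and read off from the Lax equations that $w$ is an affine function of $q'$ and $q$ and that $\zeta' = c(\bar q' q - q' \bar q)$ for the appropriate constant $c$. The surviving second-order equation on $q$ should then take exactly the shape $q'' + 8(|q|^2 + C)q - 8\xi q = 2\Re(\lambda q)$ of \eqref{EL}, with $\xi$ purely imaginary and $C$ real emerging as the constants of integration built into $X_0$ and into the fixed value of $\det X$. Matching the coefficient of the linear term $2\Re(\lambda q)$ to the top-degree data of $X$ identifies $\lambda$ as the (generically nonzero) off-diagonal parameter hidden in $X_0$; the point is that for $p+1 = 2$ the Killing-field equation is precisely the constrained Willmore equation, whereas by the earlier Lemma spectral genus $0$ forces $q \equiv \mathrm{const}$. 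This gives the forward direction: a genus-$1$ equivariant torus satisfies \eqref{EL}, hence is constrained Willmore, and is non-homogeneous since $g \geq 1$ rules out $q = \mathrm{const}$.

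For the converse, I would start from an equivariant constrained Willmore torus that is CMC in a space form and not homogeneous, and construct a degree-$2$ polynomial Killing field directly out of the solution $q$ of \eqref{EL}. Given a solution of \eqref{EL} with its associated function $\xi$ and constant $C$, one defines $X_0$ by the explicit formulas just derived (using $\xi$ and $q'$), sets $X_1, X_2$ as above, and checks by a direct computation that $X' = [X, L]$ holds; the CMC condition in a space form is what guarantees the existence of the constant $\lambda$ and the integrability of the $\xi$-equation, so that such an $X$ closes up as a genuine polynomial Killing field rather than requiring higher-degree corrections. Then $\det X$ is a degree-$4$ even polynomial in $a$, and by Theorem \ref{deg} the normalization of $\eta^2 = \det X$ is the spectral curve, of arithmetic genus $p = 1$; non-homogeneity guarantees $X$ does not degenerate to the genus-$0$ field $X = L$, so the genus is exactly $1$.

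The main obstacle I anticipate is the converse: one must verify that the Killing field built from a CMC-in-a-space-form solution genuinely has its poles only at $a = \infty$ and is non-vanishing for finite $a$ (so that it defines a polynomial Killing field in the sense of the Definition), and that the genus does not secretly drop below $1$ or jump above it. Controlling the degree precisely — ensuring $\det X$ has an honest degree-$4$ even form with the right reality and $\sigma$-symmetry, and that no extraneous common factor reduces the normalized curve — is the delicate bookkeeping step. Here I would lean on Theorem \ref{CMC}, which already identifies the isothermic equivariant constrained Willmore tori with the CMC-in-a-space-form tori, together with the constancy of $\det X$ from Lemma \ref{DetXpreserved} to pin the genus down to exactly $1$.
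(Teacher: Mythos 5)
Your overall strategy---degree-$2$ polynomial Killing field, Lax equation, comparison with \eqref{EL}, and an explicit construction of $X$ for the converse---is the same as the paper's, and your converse is essentially the paper's argument in outline (including your worry about zeros of $X$: the paper handles it by observing that a zero of $X$ at finite $a$ would produce a degree-$1$ section of $\mathcal E$ without zeros, hence spectral genus $0$ and $q$ constant, contradicting non-homogeneity). The genuine gap is in the forward direction. The Lax equation $X' = [X,L]$ leaves no ``hidden parameters'' in $X_0$: writing $X_0 = \dvector{-ib_0 & 2i\bar p_0\\ 2ip_0 & ib_0}$, the coefficient equations force $2p_0 = -iq'$ exactly and $b_0 = -2|q|^2 - 2c$ for a single real constant $c$. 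In particular your relation $\zeta' = c(\bar q'q - q'\bar q)$ is a miscomputation: the right-hand side is purely imaginary, while the diagonal Lax equation actually gives $b_0'$ proportional to $(|q|^2)'$, which is real and imposes no constraint at all. Consequently the scalar ODE produced by the Killing field is precisely
$$q'' + 8\left(|q|^2 + c\right)q = 0,$$
which is \emph{not} the Euler--Lagrange equation \eqref{EL}: it is missing the terms $-8\xi q$ and $2\Re(\lambda q)$, and since $\Re(\lambda q)$ is not complex-linear in $q$, it cannot be absorbed into the constant $c$ for a general complex-valued $q$. So your claim that ``for $p+1=2$ the Killing-field equation is precisely the constrained Willmore equation'' is false as stated, and with it the forward implication collapses.

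The missing idea is the $\sigma$-symmetry of the spectral curve of a torus in $S^3$, i.e.\ the evenness of $\det X$, used not only to kill the $a^{2p+1}$-coefficient (your $b=0$) but also the coefficient of $a^1$: a direct computation shows that coefficient is a nonzero multiple of $q'\bar q - \bar q' q$, so evenness forces $q'\bar q - \bar q'q = 0$, equivalently $q\mu$ real-valued for some $\mu \in S^1$. This is exactly the statement that the torus is isothermic, and it makes $\xi$ constant. Only after this step is the flow equation above equivalent to \eqref{EL} (the omitted terms are then real multiples of $q$, absorbable into $c$), so that the torus is constrained Willmore; and only then can Theorem \ref{CMC} be invoked to upgrade ``isothermic and constrained Willmore'' to ``CMC in a space form''---which is what the proposition actually asserts, not merely ``constrained Willmore''. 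Without deriving isothermicity from the evenness of $\det X$, your forward direction yields neither the Euler--Lagrange equation nor the CMC conclusion.
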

\begin{proof} 
Let $f$ be an equivariant conformal immersion. The genus of the spectral curve is $1$ if and only if there is a polynomial Killing field of degree $2$ satisfying
$X' = [X, L].$
By Lemma \ref{X} we have that such a polynomial Killing field is given by  
$$X =  \dvector{-i & 0\\ 0& i } a^2+   \dvector{0 &   2i \bar q\\  2i q & 0 } a +  \dvector{-i b_0 & 2i \bar p_0\\  2i p_0 & i b_0 }.$$
The equation $X' = [X, L]$ gives:
\begin{equation*}
\begin{split}
X_0'  &= \dvector{i b_0' &  - 2i \bar p_0'\\  - 2i p_0' & -i b_0' }= [X_0, X_{1}] = \dvector{ 4 p_0 \bar q - 4 \bar p_0 q  & 4  b_0 \bar q\\   - 4  b_0 q & -4 p_0 \bar q + 4 \bar p_0 q }\\
X_1' &=  \dvector{0 & - 2 i \bar q'\\ - 2 i q' & 0 } = [X_0, X_2] = \dvector{ 0  &  - 4\bar p_0\\  4 p_0 & 0 }\\
X_2' &= 0.
\end{split}
\end{equation*}
Thus we obtain 
\begin{equation}
\begin{split}
2p_0 &= - i q',\\
p_0' &= - 2 i b_0q,\\
i b_0'  &= 4 p_0 \bar q - 4 \bar p_0 q,
\end{split}
\end{equation}
Therefore
\begin{equation}
 b_0 = - 2  |q|^2 - 2 c, \text{ for some real constant }c.
\end{equation}
This gives
\begin{equation}\label{flow2}
 q'' + 8(|q|^2  + c)q = 0,
\end{equation}
The constraint that det$X$ is an even polynomial yields
\begin{equation}\label{symmetry1}
q' \bar q - \bar q' q = 0.
\end{equation}
Equation  (\ref{symmetry1}) is satisfied if and only if there exists a $\mu \in S^1$ such that  $q\mu$ is real valued. Hence the function $\xi$ of equation \ref{EL} is constant. Then
equation (\ref{flow2}) is equivalent to equation (\ref{EL}) and  there exist to every solution $q$ of  (\ref{flow2}) a surface $f$ with conformal Hopf differential $q$. This surface
 is isothermic and constrained Willmore and thus CMC in a space form.\\

On the other hand, if $f$ is isothermic and constrained Willmore, its conformal Hopf differential satisfies the equation (\ref{symmetry1}) and solves
$$q'' + 8(|q|^2  + c)q = 0.$$
To such a solution $q$
 we can define a polynomial Killing field
$$X = \dvector{-i & 0\\ 0&  i } a^2+   \dvector{0 & 2 i \bar q\\ 2 i q & 0 } a +  \dvector{ -i b_0 & 2 i \bar p_0\\ 2 i p_0 & - i b_0 }$$
with $p_0 = - \tfrac{i}{2} q'$ and $b_0=  - 2  |q|^2 - 2 c$.
By construction $X$ satisfies the equation $X' = [X, L]$. If the so defined polynomial Killing field $X$ would have a zero for $a \in \C$ then there exist another section of $\mathcal E$ with degree $1$ without any zeros for all $a \in \C$. Then $q$ is constant and the corresponding $f$ homogenous by the previous lemma.
\end{proof}

\begin{Pro}\label{Hopf2}
An equivariant and non-isothermic torus in $S^3$ has (arithmetic) spectral genus $2$  if and only if it lies in the associated family of a constrained Willmore Hopf cylinder.
\end{Pro}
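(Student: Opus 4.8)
The plan is to follow the pattern of Proposition~\ref{CMC2}, now with a polynomial Killing field of degree $p+1=3$. By Lemma~\ref{X}, and since the coefficient $b$ of $X_p$ has already been shown to vanish, such a field has the shape
$$X = U a^3 + R a^2 + X_1 a + X_0, \qquad U = \dvector{-i & 0\\ 0 & i}, \quad R = \dvector{0 & 2i\bar q\\ 2iq & 0},$$
and, writing $L = Ua + R$, the equation $X'=[X,L]$ splits into the recursion $X_j' = [X_{j-1},U] + [X_j,R]$. First I would solve this recursion from the top down, staying inside $su(W)$. The $a^2$-equation forces the off-diagonal entries of $X_1$ to be $\bar q'$ and $-q'$; the $a^1$-equation then fixes the diagonal of $X_1$ to $2i|q|^2 + ic_1$ (with $c_1$ a real integration constant) and determines the off-diagonal of $X_0$ to be second order in $q$, namely $-\tfrac i2\bar q'' - 2i(2|q|^2+c_1)\bar q$ together with its conjugate partner.

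Next I would impose that $\det X$ be an even polynomial, which is the symmetry coming from the involution $\sigma$ of an immersion into $S^3$. Vanishing of the coefficient of $a^3$ determines the diagonal of $X_0$ to be $-2i\,\Im(q'\bar q)$, i.e.\ exactly $2\xi'$, so that the Gauss equation $2\xi' = \bar q'q - q'\bar q$ of \eqref{EL} and its non-local function $\xi$ reappear here. Vanishing of the coefficient of $a^1$ collapses, after all $|q|^2$- and $c_1$-dependent terms cancel, to the single real identity
$$\Im(q'\bar q'') = 0.$$
This is the geometric heart of the statement: it says that $q''$ is everywhere a real multiple of $q'$, i.e.\ the curve $y\mapsto q(y)$ runs along a straight line in $\C$. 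Finally the bottom equation $X_0' = [X_0,R]$ becomes a third order ODE for $q$ that integrates once to $q'' + 8(|q|^2+C)q - 8\xi q = 2\Re(\lambda q)$, the complex integration constant supplying $C$ and $\lambda$; comparing with the Theorem of \cite{BuPP} this is \eqref{EL}, so a genus~$2$ equivariant torus is automatically constrained Willmore.

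To finish the forward direction I would translate the line condition into the Hopf picture. Since $f$ is non-isothermic the line cannot pass through the origin --- the degenerate case ``line through the origin'' means $q\mu$ is real for some $\mu$, which is the isothermic, spectral genus $\le 1$ case of Proposition~\ref{CMC2}. Hence there is a $\mu\in S^1$ with $\Im(\bar\mu q)$ a non-zero constant, and passing to the associated surface $f_{\bar\mu}$ replaces $q$ by $q\bar\mu = \rho + ic_0$ with $\rho$ real and $c_0$ a non-zero constant. In this gauge the integrated ODE reduces to $\rho'' + 8\rho^3 + \tilde c\,\rho = \beta$, which by Example~\ref{elastic} is the Euler--Lagrange equation of a constrained elastic curve in the round $S^2$; thus $f_{\bar\mu}$ is the constrained Willmore Hopf cylinder over this curve and $f$ lies in its associated family. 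For the converse I would start from such a Hopf cylinder, where $q=\kappa/4 + i/2$ has real derivatives, build $X_1,X_0$ from the explicit formulas above, and verify both $X'=[X,L]$ and the evenness of $\det X$ (the latter holding because $\Im(q'\bar q'')=0$ automatically); the surviving bottom equation is precisely the constrained elastica equation for $\kappa$. Since the associated family acts by the conjugation $q\mapsto q\mu$, which preserves $\det X$ and hence the spectral genus, every member of the family is again of genus~$2$.

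I expect the main obstacle to be twofold. The first is the bookkeeping in the evenness computation: one must check that the coefficient of $a^1$ in $\det X$ really reduces to $\Im(q'\bar q'')$ with everything else cancelling, and that the several integration constants produced ($c_1$, and the energy and Lagrange multiplier $\beta$ of the elastica) match the data $C,\xi,\lambda$ of \eqref{EL} together with the free parameters of the associated family. The second, and more delicate, is showing the spectral genus is \emph{exactly} $2$: one must exclude that the constructed degree $3$ Killing field vanishes at some $a\in\C$, since such a zero would factor off and drop the surface to genus $\le 1$; this is exactly where the hypothesis that $f$ is non-isothermic (so $c_0\neq 0$) enters, in the same manner as the no-zeros argument at the end of Proposition~\ref{CMC2}.
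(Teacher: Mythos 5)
Your proposal follows essentially the same route as the paper's proof: the same degree-$3$ Killing field ansatz and top-down recursion (your $X_1$, $X_0$ agree with the paper's $b_0,p_0,b_1,p_1$ up to notation and sign conventions), the same use of the $\sigma$-symmetry --- your $a^3$-coefficient condition is the paper's $b_1+2p_0\bar q+2\bar p_0 q=0$ (i.e.\ $d=0$), and your $a^1$-coefficient condition $\Im(q'\bar q'')=0$ is the paper's $p_0'\bar p_0-p_0\bar p_0'=0$ --- the same rotation to $q=\kappa+ir$ with $r\neq 0$ forced by non-isothermicity, the same identification with constrained Willmore Hopf cylinders via the constrained elastica equation, and the same argument for exactness of the genus (genus $\le 1$ solutions are isothermic). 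Your cancellation claims check out: the coefficient of $a^1$ in $\det X$ is indeed $-\Tr(X_1X_0)=-\Im(q''\bar q')$, all $|q|^2$- and $c_1$-terms cancelling.

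There is, however, one intermediate assertion that is false as stated: the bottom equation, i.e.\ the stationary mKdV equation $q'''+24|q|^2q'+8cq'=0$, does \emph{not} ``integrate once'' to \eqref{EL} for complex $q$. Differentiating \eqref{EL} (using the Gauss equation for $\xi$) yields $q'''+20|q|^2q'+4q^2\bar q'+8Cq'-8\xi q'=\lambda q'+\bar\lambda\bar q'$, whose cubic terms $20|q|^2q'+4q^2\bar q'$ do not match the mKdV term $24|q|^2q'$ unless further relations hold; the two ODEs become equivalent precisely when the line condition is imposed. This must be so: genus-$3$ non-isothermic tori are also constrained Willmore (Proposition~\ref{rest}), so constrained Willmore-ness cannot be equivalent to the genus-$2$ ODE. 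Your proof survives because the claim is never actually needed: the integration you really perform is in the rotated gauge $q=\rho+ic_0$, where everything is real and the once-integrated mKdV genuinely is the constrained elastica equation of Example~\ref{elastic}; constrained Willmore-ness of $f$ then follows from membership in the associated family of the resulting Hopf cylinder, not from the premature claim. A second, minor, inaccuracy: the degenerate case ``line through the origin'' ($c_0=0$) produces isothermic solutions of the genus-$2$ equation which are \emph{not} constrained Willmore --- it is not the genus $\le 1$ situation of Proposition~\ref{CMC2} as you state (that equivalence holds only among constrained Willmore tori, by Theorem~\ref{CMC}); but all your argument uses is that non-isothermicity excludes $c_0=0$, which is correct.
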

\begin{Rem}
We call a surface in $S^3$ a Hopf cylinder if it is  the preimage of a (not necessarily closed) curve on $S^2$ under the Hopf fibration. 
\end{Rem}
\begin{proof}
For spectral genus $2$ solutions the  polynomial Killing field is given by
 $$X = \dvector{- i & 0\\ 0& i } a^3+   \dvector{0 &  2 i \bar q\\  2 i q & 0 } a^2 +  \dvector{- i b_0 &  2 i \bar p_0\\ 2 i p_0 &  i b_0 }a +  \dvector{- i b_1 &  2 i \bar p_1\\ 2 i p_1 &  i b_1 }$$ 
 The equation $X' = [X, L]$ gives by a straight forward calculation 
 \begin{equation}\label{constants}
 \begin{split}
 2p_0 &= - i q',\\
i b_0'  &= 4 p_0 \bar q - 4 \bar p_0 q,\\
2 p_1 &=  2  b_0 q  -  i p_0', \\
 i b_1 '  &=  -  2 i p_0'  \bar q  - 2 i \bar p_0' q, \\
 \end{split}
 \end{equation}
 Therefore we get
 \begin{equation*}
 \begin{split}
 b_0 &= - 2  |q|^2 - 2 c, \text{ for some real constant }c. \\
\text{and } \quad b_1 &=  \bar q' q - q'\bar q + d , \text{ for some real constant }d. 
\end{split}
\end{equation*}
Thus
\begin{equation}\label{flow3}
q''' + 24 |q|^2q' + 8 c q' + d = 0.
\end{equation}
 This equation is known to be the stationary mKdV equation. 
 Further the $\sigma-$symmetry gives:
 \begin{equation}\label{sigma1}
  b_1 +  2p_0 \bar q + 2\bar p_0 q = 0 \text{ and } b_1b_0 + 2 p_0 \bar p_1 + 2\bar p_0 p_1  = 0.
  \end{equation}
The first equation is equivalent to $d = 0.$ And  the second equation gives:
$$p_0'\bar p_0 - p_0 \bar p_0' = 0 ,$$
which holds if and only if there is a $ \mu \in S^1$ with $\mu p_0 \in \ii \R.$  Since $2p_0 = i q',$  
we obtain for the corresponding $q$ that there exist a $\mu$ with  $q\mu  = \kappa + ir $ for a real valued function $\kappa$ and a real constant $r \geq 0$. If $r= 0,$ then the corresponding surface, which always exists,  is isothermic but not constrained Willmore. For $r \neq 0$  the surface with conformal Hopf differential $q = \kappa +  ir$ solving equation (\ref{flow3}) with $d = 0$ satisfies (\ref{EL}) with $\xi = i r\kappa.$ Thus it is a constrained Willmore Hopf cylinder.\\

For a non isothermic constrained Willmore immersion $f:\C \rightarrow S^3$ which lies in the associated family of a constrained Willmore Hopf cylinder its conformal Hopf differential $q$ solves
$$q''' + 24 |q|^2q' + 8 c q'  = 0.$$
We can define a polynomial Killing field
$$X = \dvector{i & 0\\ 0& -i } a^3+   \dvector{0 & - 2 i \bar q\\ - 2 i q & 0 } a^2 +  \dvector{ i b_0 & - 2 i \bar p_0\\ - 2 i p_0 & - i b_0 }a +  \dvector{ i b_1 & - 2 i \bar p_1\\ 2 i p_1 & - i b_1}$$ 
such that the entries satisfy
the equations in (\ref{constants}). 
Then $X' = [X, L]$. Since the solutions of spectral genus $0$ and $1$ are isothermic $q$ has spectral genus $2.$ 
\end{proof}
\begin{Rem}
In contrast to the spectral genus $1$ case the constrained Willmore Hopf tori can have closed solutions with singular spectral curve obtained by simple factor dressing of a multi covered circle. Nevertheless, the arithmetic genus $p$ of the constrained Willmore Hopf tori satisfies $p \leq 2,$ since the profile curve is always constrained elastic. 
\end{Rem}
\begin{Pro}\label{rest}
Let $f: T^2 \rightarrow S^3$ be an equivariant constrained Willmore torus, then its (arithmetic) spectral genus is at most $3.$ Moreover, every non isothermic equivariant torus of spectral genus $3$ is constrained Willmore.
\end{Pro}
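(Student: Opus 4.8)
The plan is to follow the same template as the genus $1$ and genus $2$ cases (Propositions \ref{CMC2} and \ref{Hopf2}), comparing the flow equations generated by the Lax equation $X' = [X,L]$ against the Euler--Lagrange equation \eqref{EL}, but now at one higher order. First I would write down the general polynomial Killing field of degree $p+1 = 4$, using the normalization from Lemma \ref{X} so that
\begin{equation*}
X = \dvector{-i & 0\\ 0 & i}a^4 + \dvector{0 & 2i\bar q\\ 2i q & 0}a^3 + \dvector{-ib_0 & 2i\bar p_0\\ 2ip_0 & ib_0}a^2 + \dvector{-ib_1 & 2i\bar p_1\\ 2ip_1 & ib_1}a + \dvector{-ib_2 & 2i\bar p_2\\ 2ip_2 & ib_2}.
\end{equation*}
Expanding $X' = [X,L]$ order by order in $a$ yields a recursive hierarchy determining $p_0,b_0,p_1,b_1,p_2,b_2$ in terms of $q$ and its derivatives (with real integration constants $c,d,\dots$ appearing in the $b_i$, exactly as before). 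The key computation is to read off the residual differential constraint on $q$ coming from the lowest-order equation $X_0' = [X_0, X_1]$ (the $a^0$ term), which I expect to be a fifth-order ODE of the form $q^{(5)} + \dots = 0$, i.e.\ the next flow in the mKdV hierarchy.

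The decisive step, as in Proposition \ref{Hopf2}, is to impose the $\sigma$-symmetry forcing $\det X$ to be an even polynomial in $a$. This annihilates the odd-degree coefficients of $\det X$ and produces a system of algebraic relations among the $b_i, p_i$; after substituting the recursive expressions, these should collapse the fifth-order flow to an equation that differentiates to the constrained Willmore Euler--Lagrange equation \eqref{EL}. Concretely, I would show that the evenness constraint is equivalent to $2\xi' = \bar q' q - q'\bar q$ together with $q'' + 8(|q|^2 + C)q - 8\xi q = 2\operatorname{Re}(\lambda q)$ holding for suitable real constant $C$ and $\lambda \in \C$ read off from the integration constants; this identifies the genus $3$ flow with \eqref{EL} and hence every such $q$ is constrained Willmore. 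For the converse direction needed in the genus bound, I would start from a non-isothermic solution of \eqref{EL}, explicitly build the degree-$4$ polynomial Killing field with coefficients defined by \eqref{constants}-type formulas, verify $X' = [X,L]$ by a direct (tedious but routine) calculation, and check that $X$ has no zeros for $a \in \C$ so that its degree is genuinely $4$ and the arithmetic genus is exactly $3$.

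To close the bound at $g \le 3$, I would argue that a constrained Willmore torus cannot produce a polynomial Killing field of degree exceeding $4$. The mechanism is that the Euler--Lagrange equation \eqref{EL} is only a second-order equation for $q$ (coupled to the first-order Gau\ss{} equation for $\xi$), so after using \eqref{EL} to eliminate $q''$ the hierarchy must terminate: any higher coefficient $X_i$ with $i < p-3$ would be forced, via the recursion together with the evenness of $\det X$, to be expressible through the lower ones, contradicting the assumed degree $p+1 > 4$. Equivalently, the space of polynomial Killing fields whose defining ODE on $q$ is compatible with \eqref{EL} is finite-dimensional and caps the degree at $4$; a dimension count on the number of conserved real integration constants versus the order of \eqref{EL} gives $p \le 3$.

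The main obstacle I anticipate is the converse/termination direction rather than the forward computation. Showing that \emph{every} equivariant constrained Willmore torus has genus at most $3$ requires ruling out higher-degree Killing fields for solutions of \eqref{EL}, and this is not purely formal: one must verify that the symmetry constraints (evenness of $\det X$ from $\sigma$, and the $su(W)$-reality from $\rho$) leave no room for a genuinely degree-$5$ field that is nonetheless compatible with the second-order Euler--Lagrange equation. Carefully tracking how many independent real constants the hierarchy introduces at each order, and matching them against the two constants $C$ and $\lambda$ permitted by \eqref{EL}, is where the argument is most delicate; I would expect the genus $3$ case to exhaust exactly the available freedom, so that the $g \le 3$ ceiling emerges precisely from the second-order nature of the constrained Willmore equation.
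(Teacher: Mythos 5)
Your setup (the degree-$4$ Killing field, the recursion coming from $X'=[X,L]$, and the $\sigma$-symmetry constraints) matches the paper's computation, but your two decisive claims break down. A small bookkeeping point first: the genus-$3$ stationary flow is a \emph{fourth}-order ODE in $q$ (equation (\ref{4th}): $p_0\sim q'$, $p_1\sim q''$, $p_2\sim q'''$, and the lowest Lax coefficient differentiates once more), not a fifth-order one. The serious problem is your claim that evenness of $\det X$ collapses the flow to something \emph{equivalent} to the Euler--Lagrange equation \eqref{EL}, ``hence every such $q$ is constrained Willmore.'' This is false: every real-valued $q$ satisfies the $\sigma$-symmetry conditions (\ref{g3symmetry}) (the paper notes this explicitly after (\ref{symmetry})), so there are isothermic solutions of the genus-$3$ flow with even, real $\det X$; by Proposition \ref{CMC2} an isothermic constrained Willmore torus has spectral genus $\leq 1$, so these cannot be constrained Willmore. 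That is precisely why the Proposition carries the non-isothermic hypothesis, which your argument for this direction never invokes. The paper's actual argument is a parameter count: a solution of (\ref{4th}) satisfying the symmetry is determined by the initial values $q_0,q_0',q_0'',q_0'''$ (which then fix $c$ and $e$), and for a non-isothermic $q$ there is a point $y_0$ where $q_1,q_2,q_1',q_2'$ are nonzero and $q_1q_2'-q_2q_1'\neq 0$, so the free parameters $\lambda$, $C$ and the initial value $\xi_0$ in \eqref{EL} can be chosen to reproduce this initial data; non-isothermicity is exactly what makes that system solvable.

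Your mechanism for the bound $g\leq 3$ is also misdirected. One does not need to rule out Killing fields of degree exceeding $4$; one needs to \emph{exhibit} a nonzero polynomial section of $\mathcal E$ of degree at most $4$, since then $-\deg\mathcal E\leq 4$ and $p=-\deg\mathcal E-1\leq 3$ (any zeros of the constructed section only lower the genus, which is harmless for an upper bound --- so your worry about checking $X$ has no zeros is likewise unnecessary here). The content of the bound is therefore the implication ``\eqref{EL} $\Rightarrow$ (\ref{4th}) for suitable $c,e$,'' and this is a concrete computation, not a termination or dimension-counting argument: rotate $q$ so that $\lambda\in\R$, differentiate \eqref{EL} twice to get (\ref{4}), subtract from (\ref{4th}), eliminate the remaining $q''$-terms using \eqref{EL} again, and use the first integral $\bar q' q' = -4|q|^4+8\xi^2-8C|q|^2+2\lambda\Re(q)^2-\tilde d$ of \eqref{EL}; the residue vanishes identically precisely when $4c-8C+\lambda=0$ and $2e+\tilde d+8C^2+8cC=0$, which determines $c$ and $e$. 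This step --- where the second-order equation \eqref{EL} is actually played off against the fourth-order flow --- is the core of the proof, and it is the step your proposal leaves vague.
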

\begin{proof}
 An equivariant torus has arithmetic spectral genus $p = 3$ if and only if it has a polynomial Killing field of the form:
  \begin{equation*}
  \begin{split}
  X &= \dvector{- i & 0\\ 0& i } a^4+   \dvector{0 & 2 i \bar q\\ 2 i q & 0 } a^3 +  \dvector{ -i b_0 & 2 i \bar p_0\\ 2 i p_0 &  i b_0 }a^2\\ &+\dvector{ -i b_1 &  2 i \bar p_1\\  2 i p_1 &  i b_1 }a +   \dvector{- i b_2 & 2 i \bar p_2\\ 2 i p_2 &  i b_2 }.
  \end{split}
  \end{equation*}
 \noindent
 Again the equation $X' = [X,L]$ yields equations for each entry of the $X_i.$
We obtain
\begin{equation}\label{constants2}
\begin{split}
 2p_0 &= - i q'\\
i b_0'  &= 4 p_0 \bar q - 4 \bar p_0 q,\\
\Rightarrow b_0 &= - 2  |q|^2 - 2 c, \text{ for some real constant }c. \\
2 p_1 &=  2  b_0 q  -  i p_0' \\
 i b_1 '  &=  -  2 i p_0'  \bar q  - 2 i \bar p_0' q, \\
 \Rightarrow b_1 &=  \bar q' q - q'\bar q + d , \text{ for some real constant }d.\\
 2 p_2 &= 2  b_1 q  -  i p_1'\\
 i b_2' &=  -  2 i p_1'  \bar q  - 2 i \bar p_1' q,\\
 \Rightarrow b_2 &= 6 |q|^4 + 2 c |q|^2   + \tfrac{1}{2}(q'' \bar q + \bar q'' q - q' \bar q')+ e , \text{ for some real constant }e.
\end{split}
\end{equation}
By $\sigma-$symmetry we have
\begin{equation}\label{g3symmetry}
\begin{split}
&b_1 + 2 p_0\bar q + 2 \bar p_0q =0  \quad \quad
\Leftrightarrow \quad \quad
d = 0,\\
 &b_1b_0 + 2 p_2\bar q + 2 \bar p_2 q  +2 p_0\bar p_1 + 2 \bar p_0 p_1=0, \\
 &b_2b_1 + 2 p_1\bar p_2 + 2 \bar p_1 p_2 =0 . 
\end{split}
\end{equation}
Thus we obtain the following differential equation for  $q$:
 \begin{equation}\label{4th}
q'''' + 96 |q|^4q + 16 \bar q' q'q + 24 (q')^2 \bar q + 8\bar q'' q^2  + 32 |q|^2q'' + 8c(q''+ 8 |q|^2q)+ 16e q =0, 
 \end{equation}
 with real constants $c$ and $e.$\\
 
The second condition of (\ref{g3symmetry})  is equivalent to 
 \begin{equation}\label{symmetry}
 \Im\left(q''' \bar q + 24 |q|^2 \bar q q' + 8 c \bar q q' + \bar q'' q' \right)= 0.
 \end{equation}
Note that this condition holds if $q$ is real. Further if there is a $\mu \in S^1$
with $q' \mu$ is real then the condition above reduces to equation (\ref{flow3}). Thus there is no spectral genus $3$ solution with $q = \kappa + ir$ such that det$X$ is even and real. \\

We want to show that all solutions $q$ to the Euler-Lagrange equation of an equivariant constrained Willmore torus satisfies the equation (\ref{4th})  for certain parameters depending on the parameters of the Euler-Lagrange equation (for every compatible $\xi$). For potentials $q$ coming from a surface in $S^3$, the corresponding spectral curve has by definition the $\sigma-$symmetry.  Thus $q$ satisfies (\ref{g3symmetry}). Let $q$ be any solution to \eqref{EL}, then also $q \mu$ satisfies \eqref{EL} with parameters $C_\mu, \lambda_\mu$ and $\xi_\mu$ (see section 2.3). Thus there  exist a $\mu$ such that  $\lambda_\mu$ is real. We restrict to this case $\lambda \in \R$ in the following, since the space of solutions to equation \eqref{4th} is also invariant under the multiplication by $\mu \in S^1.$\\

\noindent
The Euler-Lagrange equation for equivariant constrained Willmore tori in $S^3$ stated in (\ref{EL}) has order $2,$ thus it is necessary to differentiate twice to obtain:
 \begin{equation}\label{4}
 q'''' + 24 \bar q' q'q + 24 (q')^2 \bar q + 4\bar q'' q^2  + 20 |q|^2q'' + 8Cq'' - 8 \xi q'' - 2\lambda \Re (  q'' ) = 0.
 \end{equation}

\noindent
Subtracting this equation from equation \eqref{4th} we obtain
$$96 |q|^4q - 8 \bar q' q' q + 4 \bar q'' q^2 + 12 |q|^2 q'' + 8 c (q'' + 8 |q|^2q) + 16 eq - 8Cq''+ 8 \xi q'' + 2  \lambda \Re(q'') =^{!} 0 .$$
Now we can use the Euler-Lagrange equation \eqref{EL} again to get rid of the second derivatives of $q$. Then
\begin{equation}\label{comp}
\begin{split}
&- 8 \bar q' q' q - 32 |q|^4q + 64 \xi^2 q- 64 C |q|^2q + 8 q^2 \lambda\Re(q) + 8 |q|^2\lambda \Re(q) \\
&+ (16e + 64C^2 + 64cC)q + (16c - 32C +4 \lambda)(4 \xi q + \lambda\Re( q)) =^{!} 0.
\end{split}
\end{equation}
A further computation shows that one can integrate the real and imaginary part of equation (\ref{EL}) once. This yields the equation

 $$\bar q' q' = - 4 |q|^4 + 8 \xi^2 - 8 C |q|^2 + 2\lambda \Re(q)^2  - \tilde d.$$
 with for a real constant $\tilde d.$ \\

 \noindent
 Using this equality \eqref{comp} becomes
 
 \begin{equation}
(16e + 64C^2 + 64cC + 8 \tilde d)q + (16c - 32C +4 \lambda)(4 \xi q + \lambda\Re( q)) =^{!} 0,
 \end{equation}
 \noindent
 which holds if and only if we choose the parameters $c$ and $e$ such that
 \begin{equation}
 \begin{split}
& 2 e + \tilde d + 8C^2 + 8cC = 0.\\
&4 c - 8C + \lambda = 0.
 \end{split}
 \end{equation}
\end{proof}

Every solution of (\ref{4th}) is fully determined by the initial value of $q,$ $q'$, $q''$ and $q'''.$ 
 By condition (\ref{g3symmetry})  these initial values also determines the constants $c$ and $e$. Thus the space of solutions $q$ of spectral genus $g \leq 3$ such that the spectral curve is defined by an even and real polynomial is $8$ dimensional.  Since every equivariant constrained Willmore solution has spectral genus $g \leq 3,$ a solution $q$ of (\ref{4th}) solves the Euler-Lagrange equation (\ref{EL}) if and only if there exist parameters ($\lambda$ and $C$ and a initial value for $\xi$) such that its initial values  $q_0,$ $q_0'$, $q_0''$ and $q_0'''$ satisfies (\ref{EL}).
This is always the case for non isothermic solutions by the following argument. If $q \mu$ is never real valued for $\mu \in S^1,$  there there exists always a point $y_0$ such that  the initial values of the solution $q = q_1 + i q_2$ satisfies  $q_1(y_0) \neq 0$, $q_2(y_0)\neq 0$ and $q_1' (y_0) \neq 0$, $q_2'(y_0)  \neq 0$ and $\frac {q_1(y_0)}{q_2(y_0)} \neq \frac {q_1'(y_0)}{q_2'(y_0)}.$ Thus at $y_0$ we get all possible initial values by choosing appropriate constants $\lambda,$ $C$ and $\xi_0$. Therefore all non isothermic solutions of (\ref{4}) are constrained Willmore.
\bibliographystyle{amsplain}

\end{document}